\newtheoremstyle{mystyle}
{3pt}
{3pt}
{\rm}
{}
{\bfseries}
{.}
{.5em}
{}
\theoremstyle{plain}
\newcommand{\RNum}[1]{\uppercase\expandafter{\romannumeral #1\relax}}
\newtheorem{prop}{Proposition}[section]
\newtheorem{proposition}{Proposition}[section]
\newtheorem{corollary}{Corollary}[section]
\newtheorem{definition}{Definition}[section]
\newtheorem{remark}{Remark}[section]
\newtheorem{theorem}{Theorem}
\theoremstyle{definition}
\newtheorem{example}{Example}
\begin{document}
\begin{frontmatter}
\title{Oriented Supersingular Elliptic Curves and Eichler Orders of Prime Level}

\author[HUBU,NUDT]{Guanju Xiao}
\ead{gjxiao@amss.ac.cn}

\author[NUDT]{Zijian Zhou}
\ead{zhouzijian122006@163.com}

\author[NUDT]{Longjiang Qu\corref{mycorrespondingauthor}}
\cortext[mycorrespondingauthor]{Corresponding author}
\ead{ljqu_happy@hotmail.com}

\address[HUBU]{School of Cyber Science and Technology, Hubei University, Wuhan 430062, China}

\address[NUDT]{College of Science, National University of Defense Technology, Changsha 410073, China}

\begin{abstract}
Let $p>3$ be a prime and $E$ be a supersingular elliptic curve defined over $\mathbb{F}_{p^2}$. Let $c$ be a prime with $c < 3p/16$ and $G$ be a subgroup of $E[c]$ of order $c$. The pair $(E,G)$ is called a supersingular elliptic curve with level-$c$ structure, and the endomorphism ring $\text{End}(E,G)$ is isomorphic to an Eichler order with level $c$. We construct two kinds of Eichler orders $\mathcal{O}_c(q,r)$ and $\mathcal{O}'_c(q,r')$ with level $c$. Interestingly, we prove that each $\mathcal{O}_c(q,r)$ or $\mathcal{O}'_c(q,r')$ can represent a primitive reduced binary quadratic form with discriminant $-16cp$ or $-cp$ respectively. If a curve $E$ is $\mathbb{Z}[\sqrt{-cp}]$-oriented or $\mathbb{Z}[\frac{1+\sqrt{-cp}}{2}]$-oriented, then we prove that $\text{End}(E,G)$ is isomorphic to $\mathcal{O}_c(q,r)$ or $\mathcal{O}'_c(q,r')$ respectively. Due to the fact that $\mathbb{Z}[\sqrt{-cp}]$-oriented isogenies between $\mathbb{Z}[\sqrt{-cp}]$-oriented elliptic curves could be represented by quadratic forms, we show that these isogenies are reflected in the corresponding Eichler orders via the composition law for their corresponding quadratic forms.
\end{abstract}

\begin{keyword}
Supersingular Elliptic Curve, Eichler Orders, Orientation
\end{keyword}
\end{frontmatter}
\section{Introduction}
Let $p$ be a prime and $E$ be a supersingular elliptic curve over $\mathbb{F}_{p^2}$. The endomorphism ring $\text{End}(E)$ of $E$ is isomorphic to a maximal order in the quaternion algebra $B_{p,\infty}$ ramified exactly at $p$ and $\infty$. Moreover, Deuring \cite{MR5125} proved that there is a one-to-one correspondence between type classes of maximal orders in $B_{p,\infty}$ and isomorphism classes of supersingular elliptic curves up to the action of $\text{Gal}(\mathbb{F}_{p^2}/\mathbb{F}_p)$.

A special case of interest is when $E$ is defined over $\mathbb{F}_p$, in which case $\text{End}(E)$ contains an element $\pi$ such that $\pi ^2=-p$. It follows that the order $\mathbb{Z}[\sqrt{-p}]$ can be embedded into $\text{End}(E)$. Let $q \equiv 3 \pmod 8$ be a prime and $\left( \frac{p}{q} \right)=-1$. Ibukiyama\cite{MR683249} has given two kinds of maximal orders $\mathcal{O}(q,r)$ and $\mathcal{O}'(q,r')$ where $r$ (resp. $r'$) satisfies $r^2 +p \equiv 0 \pmod q$ (resp. $r'^2 +p \equiv 0 \pmod {4q}$), and proved that they are isomorphic to the endomorphism rings of some supersingular elliptic curves which are defined over $\mathbb{F}_p$. We \cite{xiao2022endomorphism} have established a one-to-one correspondence between $\mathbb{F}_p$-isomorphism classes of supersingular elliptic curves defined over $\mathbb{F}_p$ and primitive reduced binary quadratic forms with discriminant $-p$ or $-16p$, and also studied the connections of this correspondence and isogenies between supersingular elliptic curves defined over $\mathbb{F}_p$. Note that any supersingular elliptic curve defined over $\mathbb{F}_p$ has $\mathbb{Z}[\sqrt{-p}]$ as a suborder in its endomorphism ring. It is interesting to consider supersingular elliptic curves whose endomorphism rings contain quadratic order $\mathbb{Z}[\sqrt{-cp}]$ as a subring with $c>1$.

Let $c< 3p/16$ be a prime. Let $E$ be a supersingular elliptic curve defined over $\mathbb{F}_{p^2}$ and $G$ a subgroup of $E[c]$ with order $c$. The pair $(E,G)$ is called a supersingular elliptic curve with level-$c$ structure. Define $\text{End}(E,G)=\mathcal{O}(E, G):=\{\alpha \in \operatorname{End}(E): \alpha(G) \subseteq G\}\subseteq \text{End}(E)$. Just as the endomorphism ring $\text{End}(E)$ is isomorphic to a maximal order in $B_{p,\infty}$, Arpin \cite{Arpin2203} proved that $\text{End}(E,G)$ is isomorphic to an Eichler order with level $c$.

An interesting case is when $E/G \cong E^p$, in which case there exists a $c$-isogeny $\phi$ from $E$ to its Galois conjugate $E^p$ with $\ker(\phi)=G$. The pair $(E,\phi)$ is called a $(c,\epsilon)$-structure with $\epsilon \in \{\pm 1\}$ \cite{chenu2021higher}. In this case, $\text{End}(E)$ contains an element $\mu$ such that $\mu^2=-cp$, so these curves are also called $\mathbb{Z}[\sqrt{-cp}]$-oriented \cite{arpin2022orientations}. These curves are interesting because they are a generalization of curves defined over $\mathbb{F}_p$.

In this paper, we study endomorphism rings of these $\mathbb{Z}[\sqrt{-cp}]$-oriented supersingular elliptic curves. We define $q$ as (\ref{e1}) in Section 3. We give two kinds of Eichler orders $\mathcal{O}_c(q,r)$ and $\mathcal{O}'_c(q,r')$ where $r$ (resp. $r'$) satisfies $r^2 +cp \equiv 0 \pmod q$ (resp. $r'^2 +cp \equiv 0 \pmod {4q}$).

We study the relation between Eichler orders $\mathcal{O}_c(q,r)$'s and reduced binary quadratic forms with discriminant $-16cp$ in the genus class $\Lambda(q)$ (defined in Section 3). We prove this result by representing $\mathcal{O}_c(q,r) \setminus \mathbb{Z}[\sqrt{-cp}]$ with a quadratic form which is equivalent to the form $(q,4r,\frac{4r^2+4cp}{q})$ or $(q,-4r,\frac{4r^2+4cp}{q})$. On the contrary, given a reduced form $(a,b,a')$ with discriminant $-16cp$ in the genus class $\Lambda(q)$, we can compute a pair of supersingular $j$-invariants $j$ and $j^p$. Moreover, we prove that $j=j^p \in \mathbb{F}_p$ if and only if there exist $x,y \in \mathbb{Z}$ such that $x^2+ay^2=4c$. Similar results also hold for $\mathcal{O}'_c(q,r')$'s, while the corresponding reduced forms are of discriminant $-cp$ in the genus class $\Lambda' (q)$ (defined in Section 3). We prove that the endomorphism ring of any $\mathbb{Z}[\sqrt{-cp}]$-oriented supersingular elliptic curve with level-$c$ structure is isomorphic to some Eichler order $\mathcal{O}_c(q,r)$.

We also study the operations of $\mathbb{Z}[\sqrt{-cp}]$-oriented isogenies between $\mathbb{Z}[\sqrt{-cp}]$-oriented supersingular elliptic curves. Let $\varphi:E_1 \to E_2$ be a $\mathbb{Z}[\sqrt{-cp}]$-oriented isogeny, where $\text{deg}(\varphi)=\ell \neq c$ is a prime. As we know, the isogeny $\varphi$ can be represented by an ideal in $\mathbb{Z}[\sqrt{-cp}]$. Without loss of generality, we can denote it by a quadratic form $g$ with discriminant $-16cp$. If $\text{End}(E_1,G) \cong \mathcal{O}_c(q_1,r_1)$ where $q_1$ can be represented by a quadratic form $f$, then we have $\text{End}(E_2,\varphi(G)) \cong \mathcal{O}_c(q_2,r_2)$ where $q_2$ can be represented by a quadratic form $fg^2$. This shows an explicit relation between $\mathbb{Z}[\sqrt{-cp}]$-oriented isogenies and endomorphism rings of $\mathbb{Z}[\sqrt{-cp}]$-oriented supersingular elliptic curves.

The remainder of this paper is organized as follows. In Section 2, we review some preliminaries on $\mathbb{Z}[\sqrt{-cp}]$-oriented supersingular elliptic curves and binary quadratic forms. We define Eichler orders $\mathcal{O}_c(q,r)$ and $\mathcal{O}'_c(q,r')$ and discuss their isomorphism classes in Section 3. We prove the correspondence between $\mathbb{Z}[\sqrt{-cp}]$-oriented supersingular elliptic curves and binary quadratic forms in Section 4. In Section 5, we prove that the composition of forms corresponds to the action of $\mathbb{Z}[\sqrt{-cp}]$-oriented isogenies on $\mathbb{Z}[\sqrt{-cp}]$-oriented elliptic curves. Finally, we make a conclusion in Section 6.

\section{Preliminaries}

\subsection{Elliptic curves and isogenies}
We present some basic facts about elliptic curves over finite fields, and the readers can refer to \cite{MR2514094} for more details.

Let $\mathbb{F}_{p^k}$ be a finite field with characteristic $p>3$ and $k \in \mathbb{Z}_{\ge 1}$. An elliptic curve $E$ defined over $\mathbb{F}_{p^k}$ can be written as a Weierstrass model $E:Y^2=X^3+aX+b$ with $a,b \in \mathbb{F}_{p^k}$ and $4a^3+27b^2 \neq 0$. The $j$-invariant of $E$ is $j(E)=1728\cdot 4a^3/(4a^3+27b^2)$. Different elliptic curves with the same $j$-invariant are isomorphic over $\overline{\mathbb{F}}_{p^k}$. The chord-and-tangent addition law makes of $E(\mathbb{F}_{p^k})=\left\{(x,y)\in \mathbb{F}_{p^k}^2:y^2=x^3+ax+b\right\} \cup \left\{\infty \right\}$ an abelian group. For any integer $n\geq 2$ with $p\nmid n$, let $E[n]$ be the group of $n$-torsion points on $E$. We have $E[n] \cong \mathbb{Z}/n\mathbb{Z} \oplus \mathbb{Z}/n\mathbb{Z}$.

Let $E_1$ and $E_2$ be elliptic curves defined over $\mathbb{F}_{p^k}$. An isogeny from $E_1$ to $E_2$ is a morphism $\phi:E_1 \rightarrow E_2$ satisfying $\phi(\infty)=\infty$. If $\phi(E_1)=\{ \infty \}$, then we say $\phi=0$. If $\phi \neq 0$, then $\phi$ is a surjective group homomorphism with finite kernel.

An endomorphism of $E$ is an isogeny from $E$ to itself. The Frobenius map $\pi :(x,y) \mapsto (x^{p^k},y^{p^k})$ is an inseparable endomorphism. The characteristic polynomial of $\pi$ is $x^2-tx+p^k$, where $t$ is the trace of $\pi$. It is well known that $E$ is supersingular (resp. ordinary) if $p\mid t$ (resp. $p \nmid t$). Moreover, the $j$-invariant of every supersingular elliptic curve over $\overline{\mathbb{F}}_p$ is proved to be in $\mathbb{F}_{p^2}$ and it is called a supersingular $j$-invariant.

Each elliptic curve $E$ over $\mathbb{F}_{p^2}$ has a Galois-conjugate curve $E^p$, defined by $p$-powering all of the coefficients in the defining equation of $E$. The curve and its conjugate are connected by inseparable Frobenius $p$-isogenies $\pi_p :E \to E^p$ and $\pi_p:E^p \to E$, defined by $p$-th powering the coordinates (abusing notation, all inseparable $p$-isogenies will be denoted by $\pi_p$). Observe that $(E^p)^p=E$, and the composition of $\pi_p:E\to E^p$ and $\pi_p:E^p \to E$ is the $p^2$-power Frobenius endomorphism of $E$.

For a supersingular elliptic curve $E$ over $\mathbb{F}_{p^2}$, the endomorphism ring $\text{End}(E)$ is isomorphic to a maximal order of $B_{p,\infty}$, where $B_{p,\infty}$ is a quaternion algebra over $\mathbb{Q}$ ramified only at $p$ and $\infty$.

Deuring \cite{MR5125} gave an equivalence of categories between the supersingular $j$-invariants and the maximal orders in the quaternion algebra $B_{p,\infty}$. If $E$ is defined over $\mathbb{F}_{p^2} \setminus \mathbb{F}_p$, then $E \not \cong E^p$, but $\text{End}(E) \cong \text{End}(E^p)$ as maximal orders of $B_{p,\infty}$. If $E$ is defined over $\mathbb{F}_p$, then $E \cong E^p$ and $\text{End}(E)$ is the maximal order uniquely identified with the isomorphism class of $E$. Furthermore, if $E$ is a supersingular elliptic curve with $\text{End}(E)=\mathcal{O}$, then there is a one-to-one correspondence between isogenies $\phi:E\to E^{'}$ and left $\mathcal{O}$-ideals $I$. More details on the correspondence can be found in \cite[Chapter 42]{MR4279905}.

\subsection{Quaternion orders and ideals}
We recall some basic facts about quaternion orders in $B_{p,\infty}$ and their ideals. The general references are \cite{MR4279905, MR0579066}.

For $a,b \in \mathbb{Q}^{\times}$, we denote by $H(a,b)=\mathbb{Q}+\alpha \mathbb{Q}+\beta \mathbb{Q}+\alpha \beta \mathbb{Q}$ the quaternion algebra over $\mathbb{Q}$ with basis $1, \alpha , \beta , \alpha \beta$ such that $\alpha^2=a$, $\beta^2=b$ and $\alpha \beta =-\beta \alpha$. The Hilbert symbol $(a,b)_{\ell}=-1$ if and only if $H(a,b)$ ramifies at $\ell$, where $\ell$ is a finite prime or $\infty$.

 We are interested in $B_{p,\infty}$, the unique quaternion algebra (up to isomorphism) ramified exactly at $p$ and $\infty$, since the endomorphism ring of a supersingular elliptic curve over $\mathbb{F}_{p^2}$ is isomorphic to a maximal order of $B_{p,\infty}$.

Every quaternion algebra has a canonical involution that sends an element $\gamma=a_1+a_2\alpha +a_3\beta +a_4 \alpha \beta$ to its conjugate $\bar{\gamma}=a_1-a_2\alpha -a_3\beta -a_4 \alpha \beta$.
The reduced trace Trd and the reduced norm Nrd of $\gamma \in B_{p,\infty}$ are defined as:
$$\text{Trd}(\gamma)=\gamma + \bar{\gamma}, \quad \text{Nrd}(\gamma)=\gamma \bar{\gamma}.$$

An order $\mathcal{O}$ of $B_{p,\infty}$ is a subring of $B_{p,\infty}$ which is also a $\mathbb{Z}$-lattice of rank 4. Elements of an order $\mathcal{O}$ are said to be integral, since they have reduced trace and reduced norm in $\mathbb{Z}$. Given a basis $\langle \alpha_1,\alpha_2,\alpha_3,\alpha_4 \rangle$, the discriminant of $\mathcal{O}$ is defined as $\text{disc}(\mathcal{O})=\text{det}(\text{Trd}(\alpha_i \bar{\alpha}_j))_{i,j \in \{1,2,3,4\}}$. We have that the discriminant of an order is an integer and is independent of a choice of basis.

Two orders $\mathcal{O}_1$ and $\mathcal{O}_2$ are in the same type if and only if there exists an element $\alpha \in B_{p,\infty}^{\times}$ such that $\mathcal{O}_1=\alpha^{-1} \mathcal{O}_2 \alpha$. An order is called maximal if it is not properly contained in any other order. The discriminant of any maximal order in $B_{p,\infty}$ is $p^2$. A suborder $\mathcal{O}'$ of $\mathcal{O}$ is an order of rank 4 contained in $\mathcal{O}$. If $c=[\mathcal{O}:\mathcal{O}']$, then the discriminant of $\mathcal{O}'$ satisfies $\text{disc}(\mathcal{O}')=c^2 \text{disc}(\mathcal{O})$.

An Eichler order in a quaternion algebra is the intersection of two (not necessarily distinct) maximal orders. The level of an Eichler order in $B_{p,\infty}$ is given by its index in one of the maximal orders whose intersection defines the order (the index will be the same for either order).

Let $\mathcal{O}$ be an Eichler order of level $c$ and $I$ an invertible left ideal of $\mathcal{O}$.
Define the left order $\mathcal{O}_L(I)$ and the right order $\mathcal{O}_R(I)$ of $I$ by
$$\mathcal{O}_L(I)=\left\{x\in B_{p,\infty}:xI\subseteq I \right \}, \quad \mathcal{O}_R(I)=\left\{x\in B_{p,\infty}:Ix \subseteq I\right \}  .$$
Moreover, $\mathcal{O}_L(I)=\mathcal{O}$, and $\mathcal{O}_R(I)=\mathcal{O}^{'}$ is also an Eichler order of level $c$, in which case we say that $I$ connects $\mathcal{O}$ and $\mathcal{O}^{'}$. The inverse of an ideal is given by $I^{-1}=\{ a \in B_{p,\infty} : IaI \subseteq I\}$. In particular, we have $II^{-1}=\mathcal{O}_{L}(I)$ and $I^{-1}I=\mathcal{O}_R(I)$.

Fix an Eichler order $\mathcal{O}$ of $B_{p,\infty}$. Given a quadratic order $O$, we say that $O$ is optimally embedded in $\mathcal{O}$ if $O=\mathcal{O}\cap K$ for some subfield $K\subseteq B_{p,\infty}$.

\subsection{Elliptic curves with level structure and their endomorphism rings}
Arpin \cite{Arpin2203} defined supersingular elliptic curves with level structure and studied their endomorphism rings.
\begin{definition}
Let $\left|\mathcal{S}_{c}\right|$ denote the set of pairs $(E, G)$, up to equivalence $\sim$, where $E$ is a supersingular elliptic curve over $\overline{\mathbb{F}}_{p}$ and $G \subseteq E[c]$ is a cyclic subgroup of order $c$. Two pairs $\left(E_{1}, G_{1}\right),\left(E_{2}, G_{2}\right)$ are equivalent under the equivalence relation $\sim$ if there exists an isomorphism $\rho: E_{1} \rightarrow E_{2}$ such that $\rho\left(G_{1}\right)=G_{2}$. The pairs in $\left|\mathcal{S}_{c} \right|$ are supersingular elliptic curves with level-$c$ structure.
\end{definition}

There exists an isogeny $\phi:E \to E/G$ with $\ker (\phi)=G$, and the dual isogeny $\hat{\phi}:E/G \to E $ has kernel $\hat{G}$.
We define the endomorphism ring of a pair in $\left|\mathcal{S}_{c}\right|$ as following.
\begin{definition}
As a subring of $\operatorname{End}(E)$, we define the ring of endomorphisms of the pair $(E, G) \in\left|\mathcal{S}_{c}\right|$ as follows:
$$
\mathcal{O}(E, G):=\{\alpha \in \operatorname{End}(E): \alpha(G) \subseteq G\}.
$$
\end{definition}

We also write $\mathcal{O}(E, G)$ as $\text{End}(E, G)$. Since $\left|\mathcal{S}_{c}\right|$ is a set of equivalence classes, one can check that $\mathcal{O}(\cdot, \cdot)$ is well-defined on these equivalence classes (see \cite[Proposition 3.3]{Arpin2203}). We consider $\mathcal{O}(\cdot, \cdot)$ as a map that we apply to elements $(E,G)$ of $|\mathcal{S}_{c}|$. Just as supersingular elliptic curves are mapped to the set of maximal orders of $B_{p,\infty}$, we map elements of $|\mathcal{S}_{c}|$ to Eichler orders of level $c$ of $B_{p,\infty}$.

The following proposition is following from Theorem 3.7, Proposition 3.8 and Theorem 4.14 in \cite{Arpin2203}.

\begin{proposition}\label{p21}(\cite[Theorem 3.7, Proposition 3.8 and Theorem 4.14]{Arpin2203})
$\mathcal{O}(E,G)$ is isomorphic to an Eichler order of level $|G|=c$. On the contrary, every Eichler order $\mathcal{O}$ of level $c$ in $B_{p,\infty}$ is isomorphic to some $\mathcal{O}(E,G)$. Moreover, the fiber above $\mathcal{O}(E,G)$ along the map $\mathcal{O}(\cdot, \cdot)$ contains precisely the equivalence classes $(E,G)$, $(E/G,\hat{G})$, $(E^p, G^p)$ and $((E/G)^p,\hat{G}^p)$.
\end{proposition}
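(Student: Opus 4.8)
The plan is to push everything through the Deuring correspondence, together with the elementary geometric principle that a subgroup $G\subset E[c]$ of order $c$ is the kernel of a $c$-isogeny $\phi\colon E\to E/G$, and that an endomorphism stabilises $G$ exactly when it descends along $\phi$: for $\alpha\in\operatorname{End}(E)$ one has $\alpha(G)\subseteq G$ iff $\phi\alpha=\beta\phi$ for a unique $\beta\in\operatorname{End}(E/G)$, in which case $c\alpha=\hat\phi\phi\alpha=\hat\phi\beta\phi$. I write $B_{p,\infty}=\operatorname{End}(E)\otimes\mathbb Q$ and use freely that $c\neq p$, which holds because $c<3p/16$.

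For the first assertion I would introduce $\mathcal O_\phi:=\{\,c^{-1}\hat\phi\beta\phi:\beta\in\operatorname{End}(E/G)\,\}\subset B_{p,\infty}$. Using $\hat\phi\phi=[c]$ and $\phi\hat\phi=[c]$, the map $\beta\mapsto c^{-1}\hat\phi\beta\phi$ is an injective ring homomorphism, so $\mathcal O_\phi$ is an order isomorphic to $\operatorname{End}(E/G)$, hence maximal. The descent principle then gives $\mathcal O(E,G)=\operatorname{End}(E)\cap\mathcal O_\phi$: if $\alpha\in\mathcal O(E,G)$ it equals $c^{-1}\hat\phi\beta\phi$ for $\beta$ the descent of $\alpha$, and conversely if $c^{-1}\hat\phi\beta\phi\in\operatorname{End}(E)$ then post-composing with $\phi$ yields $\beta\phi$, which kills $\ker\phi=G$, so $c^{-1}\hat\phi\beta\phi$ stabilises $G$. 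Thus $\mathcal O(E,G)$ is an intersection of two maximal orders, i.e. an Eichler order, and its level is $[\operatorname{End}(E):\mathcal O(E,G)]$. Since $c\neq p$, $\operatorname{End}(E)\otimes\mathbb Z_c\cong M_2(\mathbb Z_c)$ and $E[c]\cong\mathbb F_c^2$ as a module over $\operatorname{End}(E)/c\operatorname{End}(E)\cong M_2(\mathbb F_c)$; as $c\operatorname{End}(E)\subseteq\mathcal O(E,G)$, the quotient $\mathcal O(E,G)/c\operatorname{End}(E)$ is the stabiliser of the line in $\mathbb F_c^2$ corresponding to $G$, i.e. the upper-triangular subalgebra, of index $c$. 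Hence the level equals $c$.

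For surjectivity, an Eichler order $\mathcal O'$ of level $c$ is contained in a maximal order $\mathcal O$ with $[\mathcal O:\mathcal O']=c$, whence $\mathcal O'\supseteq c\mathcal O$ (the upper-triangular subalgebra contains $cM_2(\mathbb Z_c)$ locally at $c$, and $\mathcal O'_\ell=\mathcal O_\ell$ for $\ell\neq c$). By Deuring, $\mathcal O\cong\operatorname{End}(E)$ for a supersingular $E$; the $\mathbb F_c$-codimension-one subring $\mathcal O'/c\mathcal O\subset M_2(\mathbb F_c)$ is, up to conjugacy, the upper-triangular subalgebra, i.e. the stabiliser of a line $L\subset E[c]$, and taking $G$ to be the order-$c$ subgroup corresponding to $L$ reverses the previous paragraph to give $\mathcal O(E,G)=\mathcal O'$. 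Still on the easy side of the fibre statement, the four listed pairs all map to the same order up to isomorphism: with $\hat G=\ker\hat\phi$ one has $(E/G)/\hat G\cong E$, so $\mathcal O(E/G,\hat G)$ has the same two over-maximal-orders $\operatorname{End}(E)$ and $\operatorname{End}(E/G)$ (the latter embedded via $\phi$) as $\mathcal O(E,G)$ and hence coincides with it; and $\pi_p$ induces a ring isomorphism $\operatorname{End}(E)\xrightarrow{\ \sim\ }\operatorname{End}(E^p)$ compatible with the isomorphism $E[c]\xrightarrow{\ \sim\ }E^p[c]$ (bijective since $p\nmid c$) carrying $G$ to $G^p$, so $\mathcal O(E^p,G^p)\cong\mathcal O(E,G)$; composing the two symmetries yields $((E/G)^p,\hat G^p)$.

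It remains to show the fibre contains nothing more, and this is where I expect the real work. Given $(E',G')$ with $\mathcal O(E',G')\cong\mathcal O(E,G)$, one extends the isomorphism to an inner automorphism of $B_{p,\infty}$; it carries the pair of maximal orders over $\mathcal O(E',G')$ and the $c$-ideal connecting them onto the corresponding data for $\mathcal O(E,G)$. By the Deuring correspondence applied to this configuration, the $c$-isogeny $E'\to E'/G'$ must coincide, up to the two ambiguities intrinsic to the correspondence — reversing source and target, and Galois conjugation — with $\phi\colon E\to E/G$, which forces $(E',G')$ to be one of the four listed classes. The delicate point, which I would cite from Arpin's Theorem~4.9 rather than reprove, is to make "up to the intrinsic ambiguities" precise: one must rule out an automorphism of $\operatorname{End}(E')$ moving the relevant line, which comes down to the normaliser of a maximal order of $B_{p,\infty}$ equalling $\mathbb Q^\times\operatorname{End}(E')^\times$ when $j(E')\notin\mathbb F_p$ (there is then no degree-$p$ endomorphism, so the ramified two-sided ideal is non-principal and $\operatorname{Aut}(\operatorname{End}(E'))$ is trivial), together with a direct check of the finitely many curves with $j\in\mathbb F_p$ or $j\in\{0,1728\}$.
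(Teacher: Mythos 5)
The paper does not actually prove this proposition: it is imported wholesale as a combination of Theorem 3.5, Proposition 3.6 and Theorem 4.9 of Arpin's paper, so your proposal does strictly more than the source text. Your direct arguments for the first two assertions are correct and follow the standard route (which is also Arpin's): writing $\mathcal{O}(E,G)=\operatorname{End}(E)\cap c^{-1}\hat\phi\operatorname{End}(E/G)\phi$ exhibits it as an intersection of two maximal orders, and the level computation via the surjection $\operatorname{End}(E)\to\operatorname{End}(E)/c\operatorname{End}(E)\cong M_2(\mathbb{F}_c)$, under which $\mathcal{O}(E,G)$ is the full preimage of the Borel stabilizing the line corresponding to $G$, gives index $c$; the surjectivity onto Eichler orders of level $c$ reverses this correctly. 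Two small points of precision: for $(E/G,\hat G)$ you should say the two endomorphism rings are carried onto one another by the isomorphism $\gamma\mapsto c^{-1}\phi\gamma\hat\phi$ of $\operatorname{End}^0(E)$ with $\operatorname{End}^0(E/G)$ rather than that they ``coincide,'' since they sit in different ambient algebras; and your parenthetical about $\operatorname{Aut}(\operatorname{End}(E'))$ being trivial is not quite the right formulation (what matters is whether the two-sided ideal of norm $p$ is principal, equivalently whether $j(E')\in\mathbb{F}_p$). For the only genuinely hard claim --- that the fiber contains nothing beyond the four listed classes --- you defer to Arpin's Theorem 4.9, which is exactly what the paper does for the entire proposition; your Skolem--Noether-plus-Deuring outline is the right shape but, as you acknowledge, is not a complete argument. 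In sum: correct modulo the same external citation the authors themselves rely on, and more informative than the paper's own one-line treatment.
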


We introduce the $(c,\epsilon)$-structure (see \cite{chenu2021higher}) to study the case when $(E/G,\hat{G})$ is isomorphic to $(E^p, G^p)$. If there exists a $c$-isogeny $\phi: E \to E^p$ with $\ker (\phi)=G$, then we have two returning $c$-isogenies:
 $$\phi^p : E^p \to E \quad  \text{and} \quad \hat{\phi}: E^p \to E.$$

\begin{definition}
Let $E$ be a supersingular elliptic curve equipped with a $c$-isogeny $\phi: E \to E^p$ to its conjugate. We say that $(E, \phi)$ is a $(c,\epsilon)$-structure if
$$\hat{\phi}=\epsilon\phi^p \hspace{0.5em} \text{with} \hspace{0.5em} \epsilon \in \{ \pm 1\}.$$
\end{definition}

\begin{remark}
If $c=1$, then $\phi$ is an automorphism and $j(E)=j(E^p) \in \mathbb{F}_p$.
\end{remark}

If $(E, \phi)$ is a $(c,\epsilon)$-structure, then the isogenies $\phi^p$ and $\hat{\phi}$ have the same kernel. It follows that $(E/G,\hat{G})$ is isomorphic to $(E^p, G^p)$. On the contrary, if $(E/G,\hat{G})$ is isomorphic to $(E^p, G^p)$, then $(E,\phi)$ is a $(c,\epsilon)$-structure with $\ker (\phi)=G$ obviously. In this case, the fiber above $\mathcal{O}(E,G)$ is of size $1$ or $2$. In the following, we restrict to the supersingular elliptic curves with $(c, \epsilon)$-structure.

For each $(c,\epsilon)$-structure $(E,\phi)$, we call $\mu:=\pi_p \circ \phi \in \text{End}(E)$ the associated endomorphism of $E$. Notice that $\text{deg}(\mu)=cp$ and $E(\mathbb{F}_{p^2})\cong (\mathbb{Z}/(p+\epsilon)\mathbb{Z})^2$ (see \cite[Proposition 2]{chenu2021higher}). This inspires us to study the $\mathbb{Z}[\sqrt{-cp}]$-oriented elliptic curves.

\subsection{Oriented elliptic curves}
We introduce the definition of orientations, as described by Colò and Kohel in \cite{MR4165916} and Onuki in \cite{MR4170779}.

Let $K$ be an imaginary quadratic field, $O_{K}$ its ring of integers, and $O$ an order in $K$. A $K$-orientation on an elliptic curve $E$ over $\mathbb{F}_{p^{2}}$ is a homomorphism $\iota: K \rightarrow \operatorname{End}^{0}(E)$, where $\operatorname{End}^{0}(E) \cong \operatorname{End}(E) \otimes \mathbb{Q}\cong B_{p,\infty}$. We call the pair $(E, \iota)$ a $K$-oriented elliptic curve. Moreover, we say $\iota$ is an $O$-orientation, and $(E, \iota)$ is an $O$-oriented elliptic curve, if $\iota(O) \subseteq \operatorname{End}(E)$. An $\iota: K \rightarrow \operatorname{End}^{0}(E)$ is primitive if $\iota(O)=\operatorname{End}(E) \cap \iota(K)$, that is, if $\iota$ is "full" in the sense that it does not extend to an $O^{\prime}$-orientation for any strict super-order $O' \supset O$. Orientations are primitive in the rest of this paper.

Given two oriented curves $(E,\iota)$ and $(E',\iota')$, an isogeny $\varphi: E \to E'$ is said to be $K$-oriented if $\iota'=\varphi_{\ast}(\iota)$ with $\varphi_{\ast}(\iota)(\alpha)=\frac{1}{\text{deg}(\varphi)}\varphi \circ \iota(\alpha) \circ \hat{\varphi}$ for any $\alpha \in K$. In this case, we write $\varphi : (E,\iota) \to (E',\iota')$.

Let $O$ be an order in a quadratic field $K$ such that $p$ does not split in $K$ or divide the conductor of $O$. Let $SS_O(p)$ denote the set of $O$-oriented supersingular elliptic curves over $\overline{\mathbb{F}}_{p}$ up to $K$-oriented isomorphism. The subset of primitive $O$-oriented curves is denoted by $SS_O^{pr}(p)$ (see \cite{MR4165916}).

Let $L$ be the ring class field of $O$. There is an extension field $L'$ and prime ideal $\mathfrak{p}$ above $p$ in $L'$ such that every elliptic curve with complex multiplication by $O$ has a representation over $L'$ with good reduction at $\mathfrak{p}$.
Let $\mathcal{E}\ell \ell(O)$ denote the set of isomorphism classes of elliptic curves over $L'$ with endomorphism ring isomorphic to $O$ and good reduction at $\mathfrak{p}$.

By the theory of complex multiplication, we have $\# \mathcal{E}\ell \ell(O)=h(O)$. Moreover, there is a map given by reduction modulo $\mathfrak{p}$:
$$\begin{array}{rll}
    \rho: \mathcal{E}\ell \ell(O) & \longrightarrow & SS_O^{pr}(p) \\
         \tilde{E} & \longmapsto & (E,\iota).
  \end{array}$$
The map $\rho$ is injective on $\mathcal{E}\ell \ell(O)$, so that its image $\rho(\mathcal{E}\ell \ell(O))$ is a subset of $SS_O^{pr}(p)$ of size $h(O)$. We define an action of ideals of $O$ on oriented elliptic curves $(E, \iota) \in SS_O^{pr}(p)$.

Let $(E,\iota) \in SS_O^{pr}(p)$ for some order $O$ of $K$. Let $\mathfrak{a}$ be an integral ideal of $O$ coprime to $p$. Define
$E[\iota(\mathfrak{a})]:=\bigcap_{\alpha \in \mathfrak{a}} \ker (\iota(\alpha)).$
This group defines an isogeny $\phi_{\mathfrak{a}} : E \to E/E[\iota(\mathfrak{a})]$. The action of $\mathfrak{a}$ on $(E,\iota)$ is defined as $\mathfrak{a}\ast (E, \iota):=(\phi_{\mathfrak{a}}(E),(\phi_{\mathfrak{a}})_{\ast} \iota)$.

\begin{proposition}\label{p22}(\cite[Theorem 3.4]{MR4170779} and \cite[Theorem 4.4]{arpin2022orientations})
Assume that $p$ does not split in $K$ and $p$ does not divide the conductor of $O$. Then the ideal action of $C(O)$ defines a free and transitive action on $\rho(\mathcal{E}\ell \ell(O))$. Moreover, $\rho(\mathcal{E}\ell \ell(O))=SS_O^{pr}(p)$ if and only if $p$ ramifies in $K$.
\end{proposition}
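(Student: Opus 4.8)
The plan is to deduce the statement from the two cited results, \cite[Theorem 3.4]{MR4170779} and \cite[Theorem 4.4]{arpin2022orientations}, by assembling their proofs into one argument; concretely I would split the work into the freeness and transitivity of the $C(O)$-action on $\rho(\mathcal{E}\ell\ell(O))$ and the dichotomy governed by the splitting of $p$ in $K$.

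For the first part I would transfer the classical picture from characteristic zero. Over $L'$ the main theorem of complex multiplication shows that the proper $O$-ideals act on $\mathcal{E}\ell\ell(O)$ through $C(O)$, and that this action is simply transitive (equivalently, $\#\mathcal{E}\ell\ell(O)=h(O)$ together with transitivity). It then suffices to check that the reduction map $\rho$ of the excerpt is $C(O)$-equivariant. Given an integral $O$-ideal $\mathfrak a$ coprime to $p$, the associated isogeny $\tilde\phi_{\mathfrak a}\colon\tilde E\to\tilde E/\tilde E[\iota(\mathfrak a)]$ over $L'$ has degree $N(\mathfrak a)$ prime to $p$, hence is étale at $\mathfrak p$; therefore its kernel, lying in the prime-to-$p$ torsion, injects into the special fibre and the reduction of $\tilde\phi_{\mathfrak a}$ is exactly $\phi_{\mathfrak a}$ on $\rho(\tilde E)$. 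The same good-reduction statement on the prime-to-$p$ part of $\mathrm{End}^0$ shows that reduction sends $(\tilde\phi_{\mathfrak a})_{\ast}\iota$ to $(\phi_{\mathfrak a})_{\ast}\iota$, so $\rho(\mathfrak a\ast\tilde E)=\mathfrak a\ast\rho(\tilde E)$. Since $\rho$ is injective on $\mathcal{E}\ell\ell(O)$ (stated in the excerpt) and equivariant, simple transitivity is inherited by the image $\rho(\mathcal{E}\ell\ell(O))$.

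For the second part I would count orbits. First, the $C(O)$-action on the whole of $SS_O^{pr}(p)$ is free: a nontrivial stabiliser of $(E,\iota)$ would produce, via primitivity $\mathrm{End}(E)\cap\iota(K)=\iota(O)$, an element $\alpha\in O$ with $(\alpha)=\mathfrak a$, forcing $[\mathfrak a]=1$. Hence every orbit has size $h(O)$ and it remains to compute $\#SS_O^{pr}(p)$. Matching primitively $O$-oriented supersingular curves (up to $K$-oriented isomorphism) with primitive optimal embeddings of $O$ into maximal orders of $B_{p,\infty}$ (up to conjugation), the Eichler–Deuring mass formula gives a total equal to $h(O)$ times the local factor $1-\left(\tfrac{O}{p}\right)$, which is $1$ when $p$ ramifies in $K$ and $2$ when $p$ is inert (the split case being excluded, since then $K$ does not embed in $B_{p,\infty}$); after the usual automorphism bookkeeping this yields $\#SS_O^{pr}(p)=h(O)$ or $2h(O)$, i.e.\ one or two orbits. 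Finally I would identify the second orbit through the Galois-conjugation involution $(E,\iota)\mapsto(E^{p},\bar\iota)$, showing that it carries $\rho(\mathcal{E}\ell\ell(O))$ onto the reduction of the complex-conjugate CM curves, that these two sets exhaust $SS_O^{pr}(p)$, and that they coincide precisely when the prime $\mathfrak p$ above $p$ is fixed by complex conjugation in the ring class field tower, which is exactly the ramified case. Combining both parts: $\rho$ is surjective in the ramified case and maps onto a single proper orbit otherwise.

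The main obstacle is this last point: pinning down the exact image of the conjugation involution and tying ``complex conjugation fixes $\mathfrak p$'' to ``$p$ ramifies in $K$'' requires genuine input from the arithmetic of the ring class field, and is the technical core of \cite[Theorem 3.4]{MR4170779}; the orbit count via the mass formula also needs care with the automorphism weights at $j=0,1728$. By contrast, the equivariance of $\rho$, once phrased through good reduction of the CM isogenies away from $p$, is comparatively routine.
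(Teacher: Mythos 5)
The paper does not actually prove this proposition: it is imported verbatim from the literature, with the argument delegated to \cite[Theorem 3.4]{MR4170779} and \cite[Theorem 4.4]{arpin2022orientations}, so there is no internal proof to compare yours against. Measured against those sources, your outline follows essentially the same route. The first half (equivariance of $\rho$ via good reduction of the prime-to-$p$ CM isogenies, injectivity of $\rho$, and freeness from primitivity, the point being that the centraliser of $\iota(K)$ in $B_{p,\infty}$ is $\iota(K)$ itself, so a stabilising ideal is principal) is Onuki's argument. The second half, counting $\#SS_O^{pr}(p)$ through optimal embeddings of $O$ into maximal orders and Eichler's local factor $1-\left(\tfrac{O}{p}\right)$ (which is $1$ in the ramified case and $2$ in the inert case), is the mechanism in Arpin et al.; and note that once this count is in hand the ``if and only if'' already follows from $\rho(\mathcal{E}\ell \ell(O))\subseteq SS_O^{pr}(p)$ together with $\#\rho(\mathcal{E}\ell \ell(O))=h(O)$, so your final paragraph about the conjugation involution is logically redundant for the statement.

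Two cautions. First, the bijection between $SS_O^{pr}(p)$ and conjugacy classes of optimal embeddings is itself delicate: a maximal-order type with $j\notin\mathbb{F}_p$ corresponds to the two curves $E$ and $E^p$, and $\iota$, $\bar\iota$ need not be $K$-oriented-isomorphic; making this matching precise is one of the actual technical contributions of the cited paper and cannot be dismissed as ``the usual automorphism bookkeeping''. Second, your criterion ``complex conjugation fixes $\mathfrak{p}$ in the ring class field tower'' is not the standard (nor an accurate) description of when the two orbits coincide. The usual argument is that when $p$ ramifies one has $pO=\mathfrak{p}^2$ with $\mathfrak{p}$ invertible, and the class of $\mathfrak{p}$ acts on $(E,\iota)$ as the Frobenius conjugation $(E,\iota)\mapsto(E^{p},(\pi_p)_{\ast}\iota)$, so the Galois-conjugate orbit is the same orbit; when $p$ is inert no ideal class realises Frobenius and the count forces two disjoint orbits. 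As written, that last step is the one genuine gap in your sketch, but since the counting argument already delivers the equivalence, the overall plan is sound and consistent with the cited proofs.
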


From now on, we let $K=\mathbb{Q}(\sqrt{-cp})$, and let $O_K$ be the maximal order of $K$. If $(E,\phi)$ is a supersingular $(c,\epsilon)$-structure and $\mu$ is the associated endomorphism, then
$$\begin{array}{rll}
    \iota_{\varphi}: \mathbb{Q}(\sqrt{-cp}) & \longrightarrow & \operatorname{End}^0(E) \\
         \sqrt{-cp} & \longmapsto & \mu
  \end{array}$$
is a $\mathbb{Z}[\sqrt{-cp}]$-orientation.

\begin{remark}
If $(E,\phi)$ is a supersingular $(c,\epsilon)$-structure, then we have that $E$ is $\mathbb{Z}[\sqrt{-cp}]$-oriented. On the contrary, for every $\mathbb{Z}[\sqrt{-cp}]$-oriented supersingular elliptic curve $E$, there exists an isogeny $\phi: E \to E^p$ with $\deg (\phi)=c$, such that $(E,\phi)$ is a supersingular $(c,\epsilon)$-structure.
\end{remark}

\begin{proposition}(\cite[Proposition 4]{chenu2021higher})
Let $(E,\phi)$ be a supersingular $(c,\epsilon)$-structure with orientation $\iota$.
\begin{enumerate}
  \item If $cp \not\equiv 3 \pmod 4$, then $\iota$ is a primitive $O_K$-orientation.
  \item If $cp \equiv 3 \pmod 4$, then $\iota$ is a primitive $O_K$-orientation if the associated endomorphism $\mu$ fixes $E[2]$ pointwise, and a primitive $\mathbb{Z}[\sqrt{-cp}]$-orientation otherwise.
\end{enumerate}
\end{proposition}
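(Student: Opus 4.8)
The plan is to reduce the statement to an identification of the order $\operatorname{End}(E)\cap\iota(K)$ of $K$, using that this intersection is automatically a $\mathbb{Z}$-order of $\iota(K)$ containing $\iota(\mathbb{Z}[\sqrt{-cp}])$. First I would record the input coming from the construction of $\iota=\iota_\varphi$: the element $\mu=\iota(\sqrt{-cp})=\pi_p\circ\phi$ lies in $\operatorname{End}(E)$ and satisfies $\mu^2=-cp$, so $\iota$ is injective with $\iota(K)\cong K$, one has $\iota(\mathbb{Z}[\sqrt{-cp}])=\mathbb{Z}[\mu]\subseteq\operatorname{End}(E)$, and $\mu\notin\mathbb{Q}$ since $\mu^2=-cp<0$. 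Next I would do the elementary arithmetic of $K$: since $c\neq p$ are primes, $cp$ is squarefree, $\operatorname{disc}\mathbb{Z}[\sqrt{-cp}]=-4cp$, and $\operatorname{disc}O_K$ equals $-cp$ or $-4cp$ according as $cp\equiv 3\pmod 4$ or not. Hence $O_K=\mathbb{Z}[\sqrt{-cp}]$ when $cp\not\equiv 3\pmod 4$, while when $cp\equiv 3\pmod 4$ one has $O_K=\mathbb{Z}[\tfrac{1+\sqrt{-cp}}{2}]$, $[O_K:\mathbb{Z}[\sqrt{-cp}]]=2$, and the only orders of $K$ containing $\mathbb{Z}[\sqrt{-cp}]$ are $\mathbb{Z}[\sqrt{-cp}]$ and $O_K$.

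For part (1) I would argue as follows. When $cp\not\equiv 3\pmod 4$ we have $\iota(O_K)=\iota(\mathbb{Z}[\sqrt{-cp}])=\mathbb{Z}[\mu]\subseteq\operatorname{End}(E)$, so $\iota$ is an $O_K$-orientation. For primitivity I would observe that $\operatorname{End}(E)\cap\iota(K)$ is a subring of $\operatorname{End}^0(E)$ lying inside the quadratic field $\iota(K)$, finitely generated over $\mathbb{Z}$ and of rank $2$ (it contains $\mathbb{Z}[\mu]$ with $\mu\notin\mathbb{Q}$), hence an order of $\iota(K)$; being contained in the maximal order $\iota(O_K)$, it must equal $\iota(O_K)$, which is exactly primitivity. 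This part is routine.

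For part (2), with $cp\equiv 3\pmod 4$, the same observation shows $\operatorname{End}(E)\cap\iota(K)$ is either $\iota(\mathbb{Z}[\sqrt{-cp}])=\mathbb{Z}[\mu]$ or $\iota(O_K)$, and it equals $\iota(O_K)$ precisely when $\tfrac{1+\mu}{2}=\iota(\tfrac{1+\sqrt{-cp}}{2})\in\operatorname{End}(E)$, i.e.\ precisely when $[2]$ divides $1+\mu$ in $\operatorname{End}(E)$. The key step is then the standard divisibility criterion: for $\lambda\in\operatorname{End}(E)$ one has $\lambda=[2]\psi$ with $\psi\in\operatorname{End}(E)$ if and only if $E[2]=\ker[2]\subseteq\ker\lambda$, the nontrivial direction being the factorisation of $\lambda$ through the quotient isogeny $E\to E/E[2]\cong E$. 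Applying this to $\lambda=1+\mu$ and using $-P=P$ for $P\in E[2]$, the condition becomes $\mu(P)=P$ for all $P\in E[2]$, i.e.\ $\mu$ fixes $E[2]$ pointwise. Hence: if $\mu$ fixes $E[2]$ pointwise then $\operatorname{End}(E)\cap\iota(K)=\iota(O_K)$ and $\iota$ is a primitive $O_K$-orientation; otherwise $\operatorname{End}(E)\cap\iota(K)=\mathbb{Z}[\mu]$ (no order lies strictly between $\mathbb{Z}[\mu]$ and $\iota(O_K)$, the index being the prime $2$), so $\iota$ is a primitive $\mathbb{Z}[\sqrt{-cp}]$-orientation. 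The main obstacle is exactly this divisibility-by-$[2]$ step and its translation into the $2$-torsion condition; I would also note as a sanity check that, since $\mu^2=-cp$ is odd, $\mu$ acts on $E[2]\cong(\mathbb{Z}/2\mathbb{Z})^2$ as an involution, so "fixes $E[2]$ pointwise" is genuinely dichotomous with "$\mu|_{E[2]}$ has order $2$". Everything else is bookkeeping about orders in $K$.
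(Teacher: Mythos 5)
Your proof is correct, and it is essentially the standard argument: the paper itself gives no proof of this proposition (it is imported verbatim from Chenu--Smith, \cite[Proposition 4]{chenu2021higher}), and the argument there runs along the same lines as yours --- identify $\operatorname{End}(E)\cap\iota(K)$ as an order between $\mathbb{Z}[\mu]$ and $\iota(O_K)$, compute that the conductor of $\mathbb{Z}[\sqrt{-cp}]$ in $O_K$ is $1$ or $2$ according to $cp \bmod 4$, and in the conductor-$2$ case test whether $\tfrac{1+\mu}{2}$ is an endomorphism via the kernel criterion $E[2]\subseteq\ker(1+\mu)$. Your handling of the divisibility-by-$[2]$ step and the integrality of $\operatorname{End}(E)\cap\iota(K)$ is exactly what is needed; there is nothing to add.
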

For $cp \equiv 3 \pmod 4$, we call $E$ is in the surface (resp. floor) if $\iota$ is a primitive $O_K$-orientation (resp. $\mathbb{Z}[\sqrt{-cp}]$-orientation).

\subsection{Imaginary quadratic orders and binary quadratic forms}
We recall some basic facts about imaginary quadratic orders and binary quadratic forms. The general references are \cite{MR1012948, MR3236783}.

Let $K$ be an imaginary quadratic field and $O$ an order of $K$. The conductor of $O$ is $f=\left[O_{K}: O\right]$, and the discriminant of $O$ is $f^{2} D_{K}$.

We denote a quadratic form $f(X,Y)=aX^2+bXY+a'Y^2$ by $(a,b,a')$ with $a,b,a' \in \mathbb{Z}$. Its discriminant is $D=b^2-4aa'$. Note that all the forms are positive definite in this paper, which means $D < 0$ and $a>0$. A form $(a,b,a')$ is called primitive if $\text{gcd}(a,b,a')=1$. Two forms $f(X,Y)$ and $g(X,Y)$ are equivalent if there are integers $p$, $q$, $r$ and $s$ such that
$$f(X,Y)=g(pX+qY,rX+sY) \quad \text{and} \quad ps-qr=1. $$
In this case, we denote $f\sim g$.
 If a form $(a,b,a')$ satisfies
\begin{itemize}
  \item [(1)] $-a < b \le a$,
  \item [(2)] $ a\le a' $,
  \item [(3)] if $a=a'$ then $b \ge 0$,
\end{itemize}
then it is reduced. Every primitive positive definite form is equivalent to a unique reduced form (see \cite[Theorem 2.8]{MR3236783}).

For every $D=b^2-4aa'<0$, there is an imaginary quadratic order $O$ such that the discriminant of $O$ is $D$.
By \cite[Theorem 6.15]{MR1012948}, every ideal in $O$ has an integral basis, and we use $[a,\beta]=\mathbb{Z}a+\mathbb{Z}\beta$ to represent an ideal in $O$, where $a$ is a rational integer and $\beta$ is a quadratic algebra integer.

We can relate the ideal class group $C(O)$ and the form class group $C(D)$ defined in \cite[\S 3]{MR3236783} as follows:
\begin{proposition}\label{t3}(\cite[Theorem 7.7]{MR3236783})
 Let $O$ be the order of discriminant $D$ in an imaginary quadratic field $K$. Then:
\begin{itemize}
  \item [(1)] If $f(X,Y)=aX^2+bXY+a'Y^2$ is a primitive positive definite quadratic form of discriminant $D$, then $[a, (-b+\sqrt{D})/2]$ is a proper ideal of $O$.
  \item [(2)] The map sending $f(X,Y)$ to $[a, (-b+\sqrt{D})/2]$ induces an isomorphism $\iota : C(D) \longrightarrow C(O)$.
  \item [(3)] A positive integer $m$ is represented by a form $f(X,Y)$ if and only if $m$ is the norm of some ideal $\mathfrak{a}$ in the corresponding ideal class in $C(O)$.
\end{itemize}
\end{proposition}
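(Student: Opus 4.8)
The plan is to follow the classical dictionary between binary quadratic forms and ideals of an order (see \cite{MR1012948,MR3236783}), keeping track of the few points where non-maximality of $O$ intervenes. Write $D=f^{2}D_{K}$ and $\tau=\frac{D+\sqrt{D}}{2}$, so that $O=\mathbb{Z}+\mathbb{Z}\tau$. For a primitive positive definite form $(a,b,a')$ of discriminant $D$, put $\omega=\frac{-b+\sqrt{D}}{2}$ and $\mathfrak{a}=[a,\omega]$; since $D=b^{2}-4aa'\equiv b\pmod{2}$, the element $\omega$ is an algebraic integer with trace $-b$ and norm $aa'$, so $\omega^{2}=-b\omega-aa'$. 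For (1) I would check that $\mathfrak{a}$ is an $O$-module: it is stable under multiplication by $\omega$ because $a\omega\in\mathfrak{a}$ and $\omega^{2}=-b\omega-aa'\in\mathfrak{a}$, and $\tau-\omega=\frac{D+b}{2}\in\mathbb{Z}$ then gives stability under $\tau$, so $\mathfrak{a}$ is a fractional $O$-ideal. It remains to verify that its multiplier ring $\{x\in K:x\mathfrak{a}\subseteq\mathfrak{a}\}$ is $O$ rather than a strictly larger order, and this is precisely where primitivity is used: a common divisor $g>1$ of $a,b,a'$ would otherwise enter the conductor, contradicting $\gcd(a,b,a')=1$. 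Hence $\mathfrak{a}$ is a proper ideal of $O$.

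For (2) I would proceed in three steps. First, the map descends to equivalence classes: it suffices to treat the two standard generators of $\mathrm{SL}_{2}(\mathbb{Z})$, checking by a short computation that the associated substitutions replace $\mathfrak{a}$ by $\lambda\mathfrak{a}$ for some $\lambda\in K^{\times}$, so the ideal class is unchanged. Second, the induced map on classes is a bijection: a proper fractional ideal $\mathfrak{b}$ with a positively oriented $\mathbb{Z}$-basis $[\alpha,\beta]$ gives a primitive form $N(\alpha X-\beta Y)/N(\mathfrak{b})$ of discriminant $D$, and one checks that this assignment and $(a,b,a')\mapsto[a,\omega]$ are mutually inverse on classes. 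Third, $\iota$ is a group homomorphism: replacing the two given classes by \emph{concordant} representatives $(a_{1},b,a_{2}c)$ and $(a_{2},b,a_{1}c)$ with $\gcd(a_{1},a_{2})=1$, whose Dirichlet composite is $(a_{1}a_{2},b,c)$, the lattice identity $[a_{1},\omega][a_{2},\omega]=[a_{1}a_{2},\omega]$ with $\omega=\frac{-b+\sqrt{D}}{2}$ follows at once from $\gcd(a_{1},a_{2})=1$ together with $\omega^{2}\in[a_{1}a_{2},\omega]$. Combining the three steps yields the isomorphism $\iota:C(D)\to C(O)$.

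For (3), note that $N([a,\omega])=a$, so under $\iota$ the leading coefficient of a form equals the norm of its associated ideal. If $\mathfrak{a}\in\iota([f])$ has norm $m$, write $\mathfrak{a}=e\mathfrak{a}_{0}$ with $\mathfrak{a}_{0}$ a primitive integral ideal of norm $m/e^{2}$; putting $\mathfrak{a}_{0}$ in standard form $[\,m/e^{2},\frac{-b_{0}+\sqrt{D}}{2}\,]$ and applying $\iota^{-1}$ shows $f\sim(m/e^{2},b_{0},c_{0})$, which represents $m$ at $(X,Y)=(e,0)$. Conversely, if $f(x_{0},y_{0})=m$ with $g=\gcd(x_{0},y_{0})$, then $f$ properly represents $m/g^{2}$, so (extending the representing vector to an $\mathrm{SL}_{2}(\mathbb{Z})$-matrix) $f\sim(m/g^{2},b_{1},c_{1})$; hence $\iota([f])$ contains $[\,m/g^{2},\frac{-b_{1}+\sqrt{D}}{2}\,]$, and therefore also $g\cdot[\,m/g^{2},\frac{-b_{1}+\sqrt{D}}{2}\,]$, which is an integral ideal of norm $m$. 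This proves the characterization.

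The step I expect to be the main obstacle is the homomorphism assertion in (2): matching Dirichlet (Gauss) composition of forms with multiplication of the associated ideals. The other parts are essentially direct verifications once the definitions are unwound, but this one requires first moving an arbitrary pair of classes to concordant representatives and then carrying out the product-of-lattices computation carefully, paying attention to the orientation conventions; this is the technical heart of the proposition.
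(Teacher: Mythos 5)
Your proposal is correct: it reconstructs the standard proof of the form--ideal dictionary (properness from primitivity, descent to classes, the inverse map $N(\alpha X-\beta Y)/N(\mathfrak{b})$, Dirichlet composition via concordant forms, and the norm computation $N([a,\omega])=a$ for part (3)). The paper itself gives no proof of this proposition --- it is quoted directly from Cox \cite{MR3236783} (Theorem 7.7 there) --- and your argument is essentially the proof found in that reference, so there is nothing to contrast.
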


\begin{remark}
If $\mathfrak{a}=[a,\beta]$ is a proper $O$-ideal with $\operatorname{Im}(\beta / a)>0$, then
$$f(X,Y)=\frac{N(a X-\beta Y)}{N(\mathfrak{a})}$$
is a positive definite form of discriminant $D$. On the level of classes, this map is the inverse to the map $\iota$ in Proposition \ref{t3}.
\end{remark}

By Proposition \ref{t3}, one can use the primitive reduced quadratic forms to represent the ideal classes in an imaginary quadratic order, and the composition of forms is equivalent to the multiplication of ideals.

\subsection{Genus theory}

Let $O$ be an imaginary quadratic order with discriminant $D< 0$. Let $p_1, \ldots, p_r$ be the distinct odd primes dividing $D$. Then let us consider the following characters:
\begin{itemize}
  \item [(1)] $\chi_i(a)=\left( \frac{a}{p_i}\right)$ \quad \quad \quad \ defined for $a$ prime to $p_i$, $i=1,\ldots,r$
  \item [(2)] $\delta(a)=(-1)^{(a-1)/2}$ \ \ \ defined for $a$ odd
  \item [(3)] $\varepsilon(a)=(-1)^{(a^2-1)/8}$ \ \ defined for $a$ odd
\end{itemize}

When $D \equiv 1 \pmod 4$, we define $\chi_1,\ldots, \chi_r$ to be the assigned characters. When $D \equiv 0 \pmod 4$, we write $D=-4n$, and the assign characters are defined by the following table:
\begin{table}[H]
    \centering
  \begin{tabular}{c|l}
    \hline
    $n$ & assigned characters \\
    \hline
    $n \equiv \ 3 \pmod  4$ & $\chi_1,\ldots,\chi_r$ \\
    $n \equiv \ 1 \pmod  4$ & $\chi_1,\ldots,\chi_r,\delta$ \\
    $n \equiv \ 2 \pmod  8$ & $\chi_1,\ldots,\chi_r,\delta\varepsilon$ \\
    $n \equiv \ 4 \pmod 8$ & $\chi_1,\ldots,\chi_r,\delta$ \\
    $n \equiv \ 6 \pmod  8$ & $\chi_1,\ldots,\chi_r,\varepsilon$\\
    $n \equiv \ 0 \pmod  8$ & $\chi_1,\ldots,\chi_r,\varepsilon,\delta$\\
    \hline
  \end{tabular}
  \end{table}

\begin{proposition}\label{p1}(\cite[Proposition 3.11 and Theorem 3.15]{MR3236783})
  Let $D\equiv 0,1 \pmod  4$ be negative. If the number of assigned characters of $D$ is $\mu$, then there are $2^{\mu-1}$ genera of forms of discriminant $D$. Moreover, the class group $C(D)$ has exactly $2^{\mu-1}$ elements of order $\le 2$.
\end{proposition}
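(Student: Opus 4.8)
The plan is to deduce both assertions from the structural fact that the genera are the fibers of a group homomorphism on $C(D)$ --- the \emph{total character map} --- whose image I can pin down using quadratic reciprocity together with Dirichlet's theorem on primes in arithmetic progressions, and then to upgrade the first assertion to the second by invoking the principal genus theorem. For $m\in\mathbb{Z}$ with $\gcd(m,D)=1$, let $\chi(m)\in\{\pm1\}^{\mu}$ record the values of the $\mu$ assigned characters of $D$ (the $\chi_i$ together with whichever of $\delta$, $\varepsilon$, $\delta\varepsilon$ the table prescribes). Given a primitive positive definite form $f$ of discriminant $D$, I would first replace it by a properly equivalent form $(a,b,a')$ with $\gcd(a,D)=1$ --- a primitive form properly represents integers coprime to any prescribed modulus --- and then read off from the identity $4a\,f(x,y)=(2ax+by)^{2}-Dy^{2}$ that any value $m=f(x,y)$ with $\gcd(m,aD)=1$ satisfies $4am\equiv(2ax+by)^{2}\pmod{|D|}$, together with a companion congruence modulo the relevant power of $2$. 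These congruences determine $\chi(m)$ in terms of $a$ alone, so $\chi(m)$ is constant on the values of $f$ coprime to $D$; denote this common value by $\Psi([f])$.

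That $\Psi([f])$ depends only on the proper equivalence class of $f$ is clear, and the composition law of Proposition \ref{t3}(3), combined with the multiplicativity of $\chi$, shows that $\Psi : C(D)\to\{\pm1\}^{\mu}$ is a group homomorphism. By definition two forms lie in the same genus exactly when they have the same image under $\Psi$, so the number of genera equals $|\operatorname{im}\Psi|$, and it remains to prove $|\operatorname{im}\Psi|=2^{\mu-1}$. For the upper bound: if $m$ (coprime to $D$) is represented by some form of discriminant $D$, then the Kronecker symbol satisfies $\left(\tfrac{D}{p}\right)=1$ for every prime $p\mid m$, and expanding these symbols by quadratic reciprocity and its supplementary laws shows that each $\chi(p)$ --- and hence $\chi(m)=\prod_{p\mid m}\chi(p)^{v_p(m)}$ --- obeys one fixed nontrivial product relation. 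Thus $\operatorname{im}\Psi$ lies in the index-$2$ subgroup $S\subseteq\{\pm1\}^{\mu}$ cut out by that relation; here one also uses that the $\mu$ assigned characters are independent, i.e.\ that $(\mathbb{Z}/D\mathbb{Z})^{\times}$ surjects onto $\{\pm1\}^{\mu}$, which is precisely what the table of assigned characters is designed to guarantee.

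For the matching lower bound I would invoke Dirichlet's theorem: given any $s\in S$, choose a prime $\ell\nmid D$ lying in the arithmetic progression modulo $|D|$ (refined modulo $8$ when $\delta$ or $\varepsilon$ occurs) that realizes the residue class forcing $\chi(\ell)=s$; since then $\left(\tfrac{D}{\ell}\right)=1$, the prime $\ell$ is represented by some form $f$ of discriminant $D$, and $\Psi([f])=\chi(\ell)=s$. Hence $\operatorname{im}\Psi=S$, so $|\operatorname{im}\Psi|=2^{\mu-1}$ and there are $2^{\mu-1}$ genera.

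For the ``moreover'' statement, note that the principal genus is $\ker\Psi$. The principal genus theorem asserts $\ker\Psi=C(D)^{2}$: the inclusion $C(D)^{2}\subseteq\ker\Psi$ is immediate since $\{\pm1\}^{\mu}$ is killed by $2$, while the reverse inclusion $\ker\Psi\subseteq C(D)^{2}$ is the genuinely deep input (classically via Gauss's theory of ternary quadratic forms, or via class field theory). Granting it, $|C(D)/C(D)^{2}|=|\operatorname{im}\Psi|=2^{\mu-1}$; and since for any finite abelian group $A$ the exact sequence $1\to A[2]\to A\xrightarrow{\,x\mapsto x^{2}\,}A\to A/A^{2}\to1$ forces $|A[2]|=|A/A^{2}|$, taking $A=C(D)$ shows that $C(D)$ has exactly $2^{\mu-1}$ elements of order dividing $2$. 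I expect the two real obstacles to be the surjectivity onto $S$, which genuinely requires Dirichlet's theorem, and the hard inclusion $\ker\Psi\subseteq C(D)^{2}$ of the principal genus theorem; everything else is bookkeeping with the assigned characters, the identity $4af=(2ax+by)^{2}-Dy^{2}$, and the composition law of Proposition \ref{t3}.
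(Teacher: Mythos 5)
The paper gives no proof of this proposition at all --- it is quoted verbatim from Cox \cite{MR3236783} (Proposition 3.11 and Theorem 3.15) --- so the only meaningful comparison is with the proofs in that reference. Your argument is the standard genus-theoretic one and is essentially sound: genera are the fibers of the total character map $\Psi:C(D)\to\{\pm1\}^{\mu}$, the image is the index-$2$ subgroup $S$ (upper bound from reciprocity, lower bound from Dirichlet), and the $2$-torsion count follows from $\ker\Psi=C(D)^{2}$ together with $|A[2]|=|A/A^{2}|$. The main divergence from the cited source is in the ``moreover'' part: Cox's Proposition 3.11 obtains $|C(D)[2]|=2^{\mu-1}$ by an elementary enumeration of ambiguous reduced forms (those with $b=0$, $a=b$, or $a=a'$), with no genus theory at all, whereas you route it through the principal genus theorem, a much heavier input. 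Your route is legitimate (and, combined with your Dirichlet argument, the two counts $2^{\mu-1}$ would in fact let one \emph{deduce} the principal genus theorem rather than assume it), but it is worth knowing that the second assertion has a purely combinatorial proof independent of the first.

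One step is misstated, though harmlessly: you claim that if $m$ coprime to $D$ is represented by a form of discriminant $D$, then $\left(\tfrac{D}{p}\right)=1$ for \emph{every} prime $p\mid m$. That is false --- e.g.\ $x^{2}+y^{2}$ represents $9$ while $\left(\tfrac{-4}{3}\right)=-1$ --- because $m$ need not be properly represented prime by prime. The correct statement, which suffices for your purposes, is that a \emph{properly} represented $m$ satisfies $D\equiv b^{2}\pmod{4m}$, hence $\left(\tfrac{D}{m}\right)=1$ as a Jacobi symbol, and this single relation is what constrains $\chi(m)$ to lie in $S$; improper representations reduce to proper ones after dividing out a square, which does not change $\chi$. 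Since you already invoke the fact that a primitive form properly represents integers coprime to any prescribed modulus, restricting to such values repairs the step with no further work.
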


Let $p>2$ and $c$ be different primes. If $cp \equiv 3 \pmod 4$, then the assigned characters of $D=-cp$ are $\chi_1(a)=\left( \frac{a}{p} \right)$ and $\chi_2(a)=\left( \frac{a}{c} \right)$. Moreover, the assigned characters of $D=-16cp$ are $\chi_1$, $\chi_2$ and $\delta$. Note that $\chi_2(a)=\left( \frac{a}{2} \right)=\varepsilon(a)$ if $c=2$.

\section{Constructing Eichler orders}
Let $p$ and $c$ be different prime integers. Choose a prime integer $q$ such that

\begin{equation}\label{e1}
\left \{
\begin{array}{ll} \vspace{1ex}
  q \equiv 3 \pmod 8, \quad \left( \frac{p}{q} \right) =-1, \quad  \left( \frac{c}{q} \right) = 1, & \text{if}  \ c \ \text{is odd}; \\
  q \equiv 7 \pmod 8, \quad \left( \frac{p}{q} \right) =-1, & \text{if} \ c=2.
\end{array} \right.
\end{equation}

Let $B_{p,\infty}$ denote the quaternion algebra over $\mathbb{Q}$ ramified precisely at $p$ and $\infty$. By \cite[Lemma 1.1]{MR683249}, the following proposition holds.

\begin{proposition}\label{p31}
Let $q$ be a prime satisfying $(\ref{e1})$.
There exist $\alpha'$ and $\beta$ satisfying $\alpha'^2=-p$, $\beta^2=-q$ and $\alpha' \beta=-\beta \alpha'$, such that $B_{p,\infty } \cong \mathbb{Q}+\mathbb{Q}\alpha' +\mathbb{Q} \beta +\mathbb{Q}\alpha' \beta$.
\end{proposition}

Moreover, we have the following proposition.
\begin{proposition}\label{p32}
Notations being as Proposition \ref{p31}.
There exists an $\alpha \in B_{p,\infty}$ such that $\alpha^2=-cp$ and $\alpha \beta =-\beta  \alpha$. Moreover, we have $B_{p,\infty } \cong \mathbb{Q}+\mathbb{Q}\alpha +\mathbb{Q} \beta +\mathbb{Q}\alpha \beta$.
\end{proposition}

\begin{proof}
For an element $\gamma =x \alpha' +y \alpha' \beta \in B_{p,\infty } $, we have $\text{Trd}(\gamma)=0$, $\text{Nrd}(\gamma)=x^2p+y^2pq$ and $\gamma \beta =-\beta \gamma$.

By Hasse-Minkowski Theorem (see \cite[Theorem 8, Chapter 4]{MR0344216}), there exist $i,j \in \mathbb{Q}$ with $j \neq 0$ such that $i^2p+j^2pq=cp$. Let $\alpha = i \alpha' +j \alpha' \beta$, and we have that $\alpha^2=-cp$ and $\alpha \beta =-\beta  \alpha$. Moreover, we have $\alpha' = \frac{i \alpha -j \alpha \beta}{c}$. It follows $B_{p,\infty } \cong \mathbb{Q}+\mathbb{Q}\alpha +\mathbb{Q} \beta +\mathbb{Q}\alpha \beta$.

\end{proof}

\begin{remark}
As we see, $(-p,-q)_{\ell}=(-cp,-q)_{\ell}=-1$ if and only if $\ell=p$ and $\infty$. So $B_{p,\infty} \cong H(-p,-q) \cong H(-cp,-q)$ for any prime $q$ satisfying $(\ref{e1})$. This is another way to prove Proposition \ref{p32}.

\end{remark}

Let $q$ be a prime satisfying $(\ref{e1})$. Choose an integer $r$ such that
\begin{equation}\label{E2}
r^2+cp \equiv 0 \pmod q.
\end{equation}

Besides, when $cp \equiv 3 \pmod 4$, we choose an integer $r'$ such that
\begin{equation}\label{E3}
r'^2+cp \equiv 0 \pmod {4q}.
\end{equation}

\begin{theorem}
Denote notions as $(\ref{e1})$, $(\ref{E2})$ and $(\ref{E3})$, the lattice $\mathcal{O}_c(q,r) = \mathbb{Z}+\mathbb{Z}\frac{1+\beta}{2}+\mathbb{Z}\frac{\alpha(1+\beta)}{2} +\mathbb{Z}\frac{(r+\alpha)\beta}{q}$ is an Eichler order in $B_{p, \infty}$ with level $c$. Moreover, if $cp \equiv 3 \pmod 4$, the lattice $\mathcal{O}'_c(q,r') = \mathbb{Z}+\mathbb{Z}\frac{1+\alpha}{2}+\mathbb{Z}{\beta} +\mathbb{Z}\frac{(r'+\alpha)\beta}{2q}$ is also an Eichler order in $B_{p, \infty}$ with level $c$.
\end{theorem}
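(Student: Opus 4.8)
The plan is to treat the two lattices in parallel, proving for each that it is (a) a full $\mathbb{Z}$-lattice of rank $4$, (b) closed under multiplication — hence an order — and (c) of discriminant $(cp)^2$, and then (d) that an order of $B_{p,\infty}$ of discriminant $(cp)^2$ with $c$ prime, $c\neq p$, is necessarily an Eichler order of level $c$. Steps (a) and (c) are short linear algebra, step (b) is the substance, and step (d) is a local computation.

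For (a) and (c), write the generators $e_1=1$, $e_2=\tfrac{1+\beta}{2}$, $e_3=\tfrac{\alpha(1+\beta)}{2}$, $e_4=\tfrac{(r+\alpha)\beta}{q}$ of $\mathcal{O}_c(q,r)$ in the $\mathbb{Q}$-basis $(1,\alpha,\beta,\alpha\beta)$ of $B_{p,\infty}$: the change-of-basis matrix has determinant $\pm\tfrac{1}{4q}\neq 0$, so $\mathcal{O}_c(q,r)$ is a rank-$4$ lattice, and it visibly contains the order $\Lambda_0:=\mathbb{Z}+\mathbb{Z}\alpha+\mathbb{Z}\beta+\mathbb{Z}\alpha\beta$ with index $4q$. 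The reduced-trace Gram matrix of $e_1,\dots,e_4$ (which is well defined for any lattice, independently of (b)) works out, using $\alpha^2=-cp$, $\beta^2=-q$, $\alpha\beta=-\beta\alpha$, $q\equiv 3\pmod 4$ and $r^2+cp\equiv 0\pmod q$, to
\[
\begin{pmatrix} 2 & 1 & 0 & 0 \\ 1 & \tfrac{1+q}{2} & 0 & r \\ 0 & 0 & \tfrac{cp(1+q)}{2} & cp \\ 0 & r & cp & \tfrac{2(r^2+cp)}{q}\end{pmatrix},
\]
and its determinant collapses to $c^2p^2$ after substituting $r^2+cp\equiv 0\pmod q$. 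The computation for $\mathcal{O}'_c(q,r')$ (which requires $cp\equiv 3\pmod 4$), with basis $1,\tfrac{1+\alpha}{2},\beta,\tfrac{(r'+\alpha)\beta}{2q}$, is entirely analogous and yields a block-diagonal Gram matrix of determinant $c^2p^2$. Hence $\text{disc}(\mathcal{O}_c(q,r))=\text{disc}(\mathcal{O}'_c(q,r'))=(cp)^2$.

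The heart of the argument, and the step I expect to be the main obstacle, is (b): closure under multiplication. One must expand each of the nine products $e_ie_j$ with $i,j\in\{2,3,4\}$, using only $\alpha^2=-cp$, $\beta^2=-q$, $\alpha\beta=-\beta\alpha$, and check that it lies in $\mathbb{Z}+\mathbb{Z}e_2+\mathbb{Z}e_3+\mathbb{Z}e_4$. Clearing the denominators $2$ and $q$ is exactly where hypotheses (\ref{e1}) and (\ref{E2}) are used: the congruences on $q$ make $\tfrac{1\pm\beta}{2}$ integral and make $-cp$ a square modulo $q$ so that $r$ exists, while $r^2+cp\equiv 0\pmod q$ is precisely what cancels the factor $q$ in every product involving $e_4$. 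This is the same verification Ibukiyama carries out in \cite{MR683249} for his maximal orders $\mathcal{O}(q,r)$, with $p$ replaced throughout by $cp$; the only structural difference is that $cp$ is no longer prime, which is irrelevant to the subring check. For $\mathcal{O}'_c(q,r')$ the analogous check additionally invokes the integrality of $\tfrac{1+\alpha}{2}$ — this is where $cp\equiv 3\pmod 4$ enters — together with (\ref{E3}).

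Finally (d): since $B_{p,\infty}$ is ramified only at $p$ and $\infty$ and $c\neq p$ is prime, the reduced discriminant $cp$ of $\mathcal{O}_c(q,r)$ is squarefree, so localizing at each prime: $\mathcal{O}_c(q,r)$ is maximal away from $\{c,p\}$ (unit reduced discriminant), maximal at $p$ (its $p$-adic reduced discriminant has valuation $1$, as does the unique maximal order of the ramified algebra $B_{p,\infty}\otimes\mathbb{Q}_p$), and of index $c$ in a maximal order of $M_2(\mathbb{Q}_c)\cong B_{p,\infty}\otimes\mathbb{Q}_c$. Now any unital subring of index $c$ in $M_2(\mathbb{Z}_c)$ contains $cM_2(\mathbb{Z}_c)$, hence is the preimage of a $3$-dimensional unital subalgebra of $M_2(\mathbb{F}_c)$; such a subalgebra cannot act irreducibly on $\mathbb{F}_c^2$ (that would force it to be all of $M_2(\mathbb{F}_c)$ or to lie in a quadratic field), so it stabilizes a line and is $\mathrm{GL}_2(\mathbb{F}_c)$-conjugate to the upper-triangular subalgebra. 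Therefore $\mathcal{O}_c(q,r)$ is, locally at $c$, conjugate to $\left(\begin{smallmatrix}\mathbb{Z}_c & \mathbb{Z}_c\\ c\mathbb{Z}_c & \mathbb{Z}_c\end{smallmatrix}\right)$, an Eichler order of level $c$; being locally Eichler of level $c$ everywhere, $\mathcal{O}_c(q,r)$ is a global Eichler order of level $c$. The identical argument applies to $\mathcal{O}'_c(q,r')$.
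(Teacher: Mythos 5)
Your proposal is correct and follows essentially the same route as the paper, which verifies that each lattice is a subring containing a basis of $B_{p,\infty}$ and has discriminant $c^2p^2$, and then concludes it is an Eichler order of level $c$. The only difference is one of detail: your step (d), justifying via localization that an order of squarefree reduced discriminant $cp$ must be Eichler of level $c$, is left implicit in the paper (as is the Gram-matrix computation, which the paper calls ``routine calculations''), and your explicit versions of both are accurate.
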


\begin{proof}
  It is easy to see that $\mathcal{O}_c(q,r)$ and $\mathcal{O}'_c(q,r')$ contain $\mathbb{Z}+\mathbb{Z}\alpha +\mathbb{Z} \beta + \mathbb{Z} \alpha \beta$, so $\mathcal{O}_c(q,r)$ and $\mathcal{O}'_c(q,r')$ contain a basis of $B_{p, \infty}$. By routine calculations, we have that $\mathcal{O}_c(q,r)$ and $\mathcal{O}'_c(q,r')$ are subrings of $B_{p, \infty}$ and their discriminants are equal to $c^2p^2$, which means they are Eichler orders in $B_{p, \infty}$ with level $c$.

\end{proof}

\begin{remark}
  Note that the isomorphism classes of $\mathcal{O}_c(q,r)$ or $\mathcal{O}'_c(q,r')$ depend on $q$, but do not depend on $r$ or $r'$.
\end{remark}

Let $I(2)$ be the group of fractional ideals of $K=\mathbb{Q}(\sqrt{-cp})$ which are prime to $2$. Let $P(2)$ be the group of principal ideals in $I(2)$. Let $P_{\mathbb{Z}}(2)$ be the
subgroup of $I(2)$ generated by principal ideals of the form $\alpha O_K$, where $\alpha \in O_K$ satisfies $\alpha \equiv 1 \pmod {2O_K}$.
For any $q$ satisfying $(\ref{e1})$, we know $q$ splits in $K$ because $\left( \frac{-cp}{q} \right)=1$.

\begin{proposition}\label{p33}
Suppose $q_{1} \neq q_{2}$ are primes satisfying $(\ref{e1})$. Let $K=\mathbb{Q}(\sqrt{-cp})$. Suppose $q_{1}$ and $q_{2}$ have prime decompositions $q_{1} O_{K}=\mathfrak{q}_{1} \overline{\mathfrak{q}}_{1}$ and $q_{2} O_{K}=\mathfrak{q}_{2} \overline{\mathfrak{q}}_{2}$.
\begin{itemize}
  \item [(1)] $\mathcal{O}'_c(q_{1},r'_1) \cong \mathcal{O}'_c(q_{2},r'_2) $ if and only if either $\mathfrak{q}_{1} \mathfrak{q}_{2} \in P(2)$ or $\mathfrak{q}_{1} \overline{\mathfrak{q}}_{2} \in P(2)$.
  \item [(2)] $\mathcal{O}_c(q_{1},r_1) \cong \mathcal{O}_c(q_{2},r_2)$ if and only if either $\mathfrak{q}_{1} \mathfrak{q}_{2} \in$ $P_{\mathbb{Z}}(2)$ or $\mathfrak{q}_{1} \overline{\mathfrak{q}}_{2} \in P_{\mathbb{Z}}(2)$.
\end{itemize}
\end{proposition}

\begin{proof}
The proof is similar to that of Proposition 2.1 in \cite{MR683249}, and we just provide a sketch of the proof.

We just discuss the isomorphism classes of $\mathcal{O}_c(q,r)$, and similar results hold for $\mathcal{O}'_c(q,r')$. Put $\mathcal{O}_c(q_1,r_1) = \mathbb{Z}+\mathbb{Z}\frac{1+\beta_1}{2}+\mathbb{Z}\frac{\alpha(1+\beta_1)}{2} +\mathbb{Z}\frac{(r_1+\alpha)\beta_1}{q_1}$ and $\mathcal{O}_c(q_2,r_2) = \mathbb{Z}+\mathbb{Z}\frac{1+\beta_2}{2}+\mathbb{Z}\frac{\alpha(1+\beta_2)}{2} +\mathbb{Z}\frac{(r_2+\alpha)\beta_2}{q_2}$.

Let $\theta$ be an isomorphism of $\mathcal{O}_c(q_{1},r_1)$ to $\mathcal{O}_c(q_{2},r_2)$. Now put $\theta((1+\beta_1)/2)=w+x(1+\beta_2)/2+y\alpha(1+\beta_2)/2 +z(r_2+\alpha)\beta_2/q_2$. Since $\theta(\alpha)\theta(\beta_1)=-\theta(\beta_1)\theta(\alpha)$, we have $y=0$. As $\text{Trd}((1+\beta_1)/2)=1$ and $\text{Nrd}((1+\beta_1)/2)=(1+q_1)/4$, we have $(xq_1+2zr_1)^2+4z^2cp=q_1q_2$. It follows that either $\mathfrak{q}_{1} \mathfrak{q}_{2} \in$ $P_{\mathbb{Z}}(2)$ or $\mathfrak{q}_{1} \overline{\mathfrak{q}}_{2} \in P_{\mathbb{Z}}(2)$.

On the contrary, assume that $x^2+4y^2cp=q_1q_2$ for some $x,y \in \mathbb{Z}$. We have $x^2+4y^2cp\equiv x^2-4r_2^2y^2 \equiv (x-2r_2y)(x+2r_2y) \pmod {q_2}$. Changing the sign of $y$ if necessary, we get $x \equiv 2r_2y \pmod {q_2}$. Denote by $\vartheta$ the $\mathbb{Q}$ linear map of $B_{p,\infty}$ to $B_{p,\infty}$ such that
$$\begin{array}{rcl}
    \vartheta(1) & = & 1, \\
    \vartheta(\alpha) & = & \varepsilon \alpha, \\
    \vartheta(\beta_1) & = & (x-2r_2y) \beta_2/q_2 +2y(r_2+\alpha)\beta_2/q_2 , \\
    \vartheta(\alpha\beta_1) & =  &\vartheta(\alpha)\vartheta(\beta_1),
  \end{array}
$$
where $\varepsilon=1$ or $-1$. We can show that $\vartheta$ induces an isomorphism of $\mathcal{O}_c(q_1,r_1)$ and $\mathcal{O}_c(q_2,r_2)$, and we omit it here.
\end{proof}

\begin{remark}
Denote by $e(q)$ (resp. $e'(q)$) the half of the cardinality of the group of the units of $\mathcal{O}_c(q,r)$ (resp. $\mathcal{O}'_c(q,r')$). As Lemma 1.8 in \cite{MR683249}, $\mathcal{O}_c(q_{1},r_1) \cong \mathcal{O}'_c(q_{2},r'_2)$ if and only if $e(q_1)=e'(q_2)=2$. In this case, we can show that $\mathcal{O}_c(q_{1},r_1)$ (resp. $\mathcal{O}'_c(q_{2},r'_2)$) corresponds to $(E(1728),G)$ (resp. $(E(1728),G')$) respectively.
\end{remark}

For $cp \equiv 3 \pmod 4$, $\mathfrak{q}_1 \mathfrak{q}_2 \in P(2)$ (resp. $\mathfrak{q}_{1} \overline{\mathfrak{q}}_{2} \in P(2)$) if and only if the ideals $\mathfrak{q}_1$ and $\bar{\mathfrak{q}}_2$ (resp. $\mathfrak{q}_2$) are equivalent in the class group $C(-cp)$ which means $q_1$ and $q_2$ can be represented by the same reduced quadratic forms with discriminant $-cp$. Note that $q_i$ can be represented by  a form $f$ if and only if it can be represented by $f^{-1}$. By \cite[Proposition 4.3]{xiao2022endomorphism}, we have the following corollary.

\begin{corollary} \label{c1}
Notations being as Proposition \ref{p33}.

(1) $\mathcal{O}'_c(q_{1},r'_1) \cong \mathcal{O}'_c(q_{2},r'_2)$ if and only if $q_1$ and $q_2$ can be represented by the same reduced forms with discriminant $-cp$.

(2) $\mathcal{O}_c(q_{1},r_1) \cong \mathcal{O}_c(q_{2},r_2)$ if and only if $q_1$ and $q_2$ can be represented by the same reduced forms with discriminant $-16cp$ .
\end{corollary}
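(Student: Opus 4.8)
The plan is to deduce the corollary from Proposition \ref{p2} — which already reduces the isomorphism question to membership of the products $\mathfrak{q}_1\mathfrak{q}_2$ and $\mathfrak{q}_1\overline{\mathfrak{q}}_2$ in the subgroups $P(2)$ and $P_{\mathbb{Z}}(2)$ of $I(2)$ — by translating those ideal-theoretic conditions into statements about representation by binary quadratic forms via Proposition \ref{t3}.

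For part (1) (so $cp \equiv 3 \pmod 4$, which is the case in which $\mathcal{O}'_c$ is defined) I would first note that $-cp \equiv 1 \pmod 4$ is a fundamental discriminant, since $cp$ is squarefree, so $C(-cp) \cong C(O_K)$; and since each of $\mathfrak{q}_i,\overline{\mathfrak{q}}_i$ is prime to $2$ and every class of $O_K$ has a representative prime to $2$, there is a canonical isomorphism $I(2)/P(2) \cong C(O_K) \cong C(-cp)$. Under it, $\mathfrak{q}_1\mathfrak{q}_2 \in P(2)$ reads $[\mathfrak{q}_1]=[\overline{\mathfrak{q}}_2]$ and $\mathfrak{q}_1\overline{\mathfrak{q}}_2 \in P(2)$ reads $[\mathfrak{q}_1]=[\mathfrak{q}_2]$, so the condition of Proposition \ref{p2}(1) is exactly $\{[\mathfrak{q}_1],[\overline{\mathfrak{q}}_1]\} = \{[\mathfrak{q}_2],[\overline{\mathfrak{q}}_2]\}$. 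On the form side, for a prime $q$ split in $K$ and prime to $cp$ the only $O_K$-ideals of norm $q$ are $\mathfrak{q}$ and $\overline{\mathfrak{q}}$, so by Proposition \ref{t3}(3) the reduced forms of discriminant $-cp$ that represent $q$ are precisely those attached to the two classes $[\mathfrak{q}]$ and $[\overline{\mathfrak{q}}] = [\mathfrak{q}]^{-1}$ (a set of one or two forms, whose $g\mapsto g^{-1}$ symmetry is the remark recorded just before the corollary). Hence "$q_1$ and $q_2$ are represented by the same reduced forms of discriminant $-cp$" is literally $\{[\mathfrak{q}_1],[\overline{\mathfrak{q}}_1]\} = \{[\mathfrak{q}_2],[\overline{\mathfrak{q}}_2]\}$, and part (1) follows.

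Part (2) would be obtained by the identical chain of equivalences, with $I(2)/P(2)\cong C(-cp)$ replaced by an isomorphism $I(2)/P_{\mathbb{Z}}(2) \cong C(-16cp)$ sending the class of a prime $\mathfrak{q}$ above $q$ to the class of a form of discriminant $-16cp$ representing $q$. This is exactly Proposition 4.2 of \cite{xiao2022endomorphism}, stated there for $K=\mathbb{Q}(\sqrt{-p})$ and discriminant $-16p$; its proof rests only on formal properties of the quadratic order (congruences modulo $2$ and the splitting of $2$) and transfers to $\mathbb{Q}(\sqrt{-cp})$ and $-16cp$. Granting it, $\mathfrak{q}_1\mathfrak{q}_2 \in P_{\mathbb{Z}}(2)$ and $\mathfrak{q}_1\overline{\mathfrak{q}}_2 \in P_{\mathbb{Z}}(2)$ become equalities of classes in $C(-16cp)$, and because a prime $q$ satisfying (\ref{e1}) is prime to the conductor of the order of discriminant $-16cp$, the same ideals-of-norm-$q$ count turns Proposition \ref{p2}(2) into "$q_1$ and $q_2$ are represented by the same reduced forms of discriminant $-16cp$".

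The hard part is the isomorphism $I(2)/P_{\mathbb{Z}}(2)\cong C(-16cp)$ feeding part (2): one must match the congruence "$\alpha\equiv 1 \pmod{2O_K}$" defining $P_{\mathbb{Z}}(2)$ with the passage from $O_K$ to the order of discriminant $-16cp$ and verify that under this matching a split prime $q$ falls into a class represented by a form $(q,b,\ast)$; this bookkeeping involves the index of $O_K^{\times}$ and the splitting type of $2$ in $K$, and is precisely what the cited Proposition 4.2 (or a re-derivation of it over $\mathbb{Q}(\sqrt{-cp})$) provides. Everything else is routine manipulation of class groups and the form–ideal dictionary of Proposition \ref{t3}.
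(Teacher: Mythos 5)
Your proposal is correct and follows essentially the same route as the paper: the paper's justification is exactly the paragraph preceding the corollary, which translates the $P(2)$-conditions of Proposition \ref{p2} into equalities of classes in $C(-cp)$ (using the $f \leftrightarrow f^{-1}$ symmetry of representation), and then invokes Proposition 4.2 of the cited work \cite{xiao2022endomorphism} for the $P_{\mathbb{Z}}(2)$ versus $C(-16cp)$ correspondence in part (2). Your additional remark that the cited proposition must be transferred from discriminant $-16p$ to $-16cp$ is a fair point of care, but it is the same argument the paper relies on.
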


Let us discuss the number of isomorphism classes of $\mathcal{O}_c(q,r)$ and $\mathcal{O}'_c(q,r')$. Notations being as subsection 2.6. By genus theory, the reduced quadratic forms with discriminant $D$ can be divided into $2^{\mu-1}$ many classes, and each class contains $h(D)/2^{\mu-1}$ many reduced forms.

Let $\Lambda(q)=(\chi_1(q),\chi_2(q),\delta(q))$ (resp. $\Lambda'(q)=(\chi_1(q),\chi_2(q))$) be the assigned values of $q$ associated with $-16cp$ (resp. $-cp$), where $\chi_1(q)=\left( \frac{q}{p} \right)$, $\chi_2(q)=\left( \frac{q}{c} \right)$ and $\delta(q)=(-1)^{(q-1)/2}=-1$. The forms with associated values $\Lambda(q)$ (resp. $\Lambda'(q)$) are called in the genus class $\Lambda(q)$ (resp. $\Lambda'(q)$). Note that the form $f$ is also in the genus class $\Lambda(q)$ (resp. $\Lambda'(q)$) if and only if $f^{-1}$ is in the genus class $\Lambda(q)$ (resp. $\Lambda'(q)$). Moreover, $f=f^{-1}$ if and only if the order of $f$ is equal to $1$ or $2$. By Corollary \ref{c1}, we have the following theorem.

\begin{theorem}\label{zt2}
If $\eta$ (resp. $\eta'$) is the number of reduced forms with order $\le 2$ in the genus class $\Lambda(q)$ (resp. $\Lambda'(q)$), then the number of isomorphism classes of $\mathcal{O}_c(q,r)$ (resp. $\mathcal{O}'_c(q,r)$) is $h(-4cp)/4+\eta/2$ (resp. $h(-cp)/4+\eta'/2$).
\end{theorem}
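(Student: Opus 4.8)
The plan is to derive the theorem from Corollary~\ref{c1} by a counting argument in the class group $C(-16cp)$ (resp.\ $C(-cp)$), combining genus theory with the classical conductor formula for class numbers.

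First I would reduce the statement to a count of unordered pairs in the class group. By Corollary~\ref{c1}, together with the remark that the isomorphism class of $\mathcal{O}_c(q,r)$ is independent of $r$, two orders $\mathcal{O}_c(q_1,r_1)$ and $\mathcal{O}_c(q_2,r_2)$ are isomorphic if and only if $q_1$ and $q_2$ are represented by the same reduced forms of discriminant $-16cp$. Let $O$ be the imaginary quadratic order of discriminant $-16cp$ in $K=\mathbb{Q}(\sqrt{-cp})$. Every prime $q$ satisfying $(\ref{e1})$ is odd, hence prime to the conductor of $O$, and splits in $K$ since $\left(\frac{-cp}{q}\right)=1$; writing $qO=\mathfrak{q}\,\overline{\mathfrak{q}}$ and $g=[\mathfrak{q}]\in C(-16cp)$ we have $[\overline{\mathfrak{q}}]=g^{-1}$. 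By Proposition~\ref{t3}(3) a prime is represented by a reduced form $f$ exactly when $[f]$ is the class of an $O$-ideal of that norm, so $q$ is represented by $f$ if and only if $[f]\in\{g,g^{-1}\}$. Via the bijection between reduced forms and ideal classes (Proposition~\ref{t3}(2) plus uniqueness of the reduced form in each class), the isomorphism relation among the $\mathcal{O}_c(q,r)$ becomes $\{g_1,g_1^{-1}\}=\{g_2,g_2^{-1}\}$. Hence the number of isomorphism classes equals the number of unordered pairs $\{g,g^{-1}\}$ as $g$ ranges over the classes of $C(-16cp)$ realized as $[\mathfrak{q}]$ for some prime $q$ satisfying $(\ref{e1})$.

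Next I would show that this set of occurring classes is exactly the genus class $\Lambda(q)$. One inclusion is immediate: the assigned characters $\chi_1,\chi_2,\delta$ evaluated at the integer $q$ give precisely the components $\left(\frac{q}{p}\right),\left(\frac{q}{c}\right),-1$ of $\Lambda(q)$, and assigned characters are constant on a genus, so every occurring class lies in $\Lambda(q)$. For the reverse inclusion I would apply the Chebotarev density theorem: each class of $\Lambda(q)$ contains infinitely many prime ideals of prime norm, and among those norms one finds primes satisfying all the conditions of $(\ref{e1})$. The delicate point is the extra congruence $q\equiv 3\pmod 8$ (resp.\ $q\equiv 7\pmod 8$ when $c=2$): when $c$ is odd the character $\varepsilon$ separating $3$ from $7$ modulo $8$ is \emph{not} among the assigned characters of $-16cp$, so this congruence is independent of the genus and can be imposed freely; when $c=2$ it is an assigned character and forces exactly $q\equiv 7\pmod 8$, again consistent with $(\ref{e1})$. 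Concretely this means applying Chebotarev to the compositum of the ring class field of $O$ and $\mathbb{Q}(\zeta_8)$, checking that these fields meet only in $\mathbb{Q}(i)$ when $c$ is odd (in $\mathbb{Q}(\zeta_8)$ when $c=2$). Then the occurring classes are precisely those of $\Lambda(q)$.

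Finally I would count. The genus class $\Lambda(q)$ is a coset of the subgroup of squares of $C(-16cp)$, hence stable under $g\mapsto g^{-1}$, and by the genus-counting result of Section~2.6 (see \cite{MR3236783}) it has $h(-16cp)/2^{\mu-1}$ elements, where $\mu$ is the number of assigned characters of $-16cp$; the table in Section~2.6 gives $\mu=3$ both when $c$ is odd and when $c=2$, so $|\Lambda(q)|=h(-16cp)/4$. The conductor formula for class numbers, applied to the inclusion of index $2$ of the order of discriminant $-16cp$ in the order of discriminant $-4cp$ — whose discriminant has vanishing Kronecker symbol at $2$, both orders having unit group $\{\pm1\}$ — yields $h(-16cp)=2\,h(-4cp)$ (and likewise $h(-32p)=2\,h(-8p)$ when $c=2$), hence $|\Lambda(q)|=h(-4cp)/2$. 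A finite set of size $N$ on which an involution acts with $\eta$ fixed points has $(N-\eta)/2$ two-element orbits and $\eta$ singletons, i.e.\ $(N+\eta)/2$ orbits; with $N=|\Lambda(q)|=h(-4cp)/2$ this is $h(-4cp)/4+\eta/2$. The assertion for $\mathcal{O}'_c(q,r')$ follows by the identical chain with $-cp$ in place of $-16cp$: here $cp\equiv 3\pmod 4$, the assigned characters of $-cp$ are just $\chi_1,\chi_2$ so $\mu=2$ and $|\Lambda'(q)|=h(-cp)/2$, giving $h(-cp)/4+\eta'/2$. The only step that is not formal bookkeeping is the surjectivity in the third paragraph — that the rigid congruences of $(\ref{e1})$, and $q\equiv 3\pmod 8$ in particular, still leave every ideal class of the genus realizable — which is exactly where a Chebotarev/Dirichlet density argument (or a transfer of the corresponding $c=1$ statement from \cite{xiao2022endomorphism}) is required; everything else combines formally from Corollary~\ref{c1}, Proposition~\ref{t3}, the genus count of Section~2.6, and the conductor class-number formula.
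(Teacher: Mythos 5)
Your argument is correct and is essentially the paper's own: the paper derives this theorem directly from Corollary~\ref{c1} together with the preceding observations that the genus class $\Lambda(q)$ (resp.\ $\Lambda'(q)$) contains $h(D)/2^{\mu-1}$ reduced forms and that $f=f^{-1}$ exactly when $f$ has order $\le 2$, which is precisely your orbit count $(N+\eta)/2$ after the conductor relation $h(-16cp)=2h(-4cp)$. The only substantive addition in your write-up is the Chebotarev argument showing that every class of the genus is actually realized by a prime satisfying $(\ref{e1})$ despite the extra congruence modulo $8$ --- a point the paper leaves implicit --- and your treatment of it is sound.
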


Assume that $c$ is an odd prime. There are two forms $(1,1,\frac{1+cp}{4})$ and $(c,c,\frac{c+p}{4})$ with order $\le 2$ in the class group $C(-cp)$. There are four forms $(1,0,4cp)$, $(4,0,cp)$, $(c,0,4p)$ and $(4c,0,p)$ with order $\le 2$ in the class group $C(-16cp)$. If $c=2$, then there are four reduced forms $(1,0,8p)$, $(8,0,p)$, $(4,4,1+2p)$ and $(8,8,2+p)$ with order $\le 2$ in the class group $C(-32p)$.

By simple calculations, we give the number of $\eta$ and $\eta'$ and reduced forms with order $\le 2$ in the genus class $\Lambda(q)$ and $\Lambda'(q)$) in the following two tables.

 \begin{table}[H]
   \centering
    \caption{Reduced forms with order $\le 2$ in the genus class $\Lambda'(q)$}
   \begin{tabular}{cccccl}

	\toprule
	$p \pmod 4$  &$c \pmod 4$ &$\Lambda'(q)$    &$\left( \frac{c}{p} \right )$ &$\eta'$   &forms \\
	\midrule
	\multirow{2}{*}{$3$} &\multirow{2}{*}{$1$} &\multirow{2}{*}{$(+1,+1)$} &$+1$ &$2$ &$(1,1,\frac{1+cp}{4}),(c,c,\frac{c+p}{4})$\\
   &&&$-1$ &$1$ &$(1,1,\frac{1+cp}{4})$\\
   \midrule
	\multirow{2}{*}{$1$} &\multirow{2}{*}{$3$} &\multirow{2}{*}{$(-1,-1)$} &$-1$ &$1$ &$(c,c,\frac{c+p}{4})$\\
   &&&$+1$ &$0$ &\\
    \bottomrule
\end{tabular}
 \end{table}

 \begin{table}[H]
   \centering
    \caption{Reduced forms with order $\le 2$ in the genus class $\Lambda(q)$}
   \begin{tabular}{cccccl}

	\toprule
	$p \pmod 4$  &$c \pmod 4$ &$\Lambda(q)$    &$\left( \frac{c}{p} \right )$ &$\eta$   &forms \\
	\midrule
	\multirow{2}{*}{$3$} &\multirow{2}{*}{$1$} &\multirow{2}{*}{$(+1,+1,-1)$} &$+1$ &$2$ &$(4,0,cp),(4c,0,p)$\\
   &&&$-1$ &$1$ &$(4,0,cp)$\\
   \midrule
	\multirow{2}{*}{$1$} &\multirow{2}{*}{$3$} &\multirow{2}{*}{$(-1,-1,-1)$} &$-1$ &$1$ &$(c,0,4p)$\\
   &&&$+1$ &$0$ &\\
   \midrule
	\multirow{2}{*}{$3$} &\multirow{2}{*}{$3$} &\multirow{2}{*}{$(+1,-1,-1)$} &$+1$ &$2$ &$(c,0,4p),(4c,0,p)$\\
   &&&$-1$ &$0$ &\\
   \midrule
	$1$ &$1$ &$(-1,+1,-1)$ & &$0$ &\\
  \midrule
	\multirow{2}{*}{$3$} &\multirow{2}{*}{$2$} &\multirow{2}{*}{$(+1,+1,-1)$} &$+1$ &$2$ &$(4,4,1+2p),(8,0,p)$\\
   &&&$-1$ &$1$ &$(4,4,1+2p)$\\
   \midrule
	\multirow{2}{*}{$1$} &\multirow{2}{*}{$2$} &\multirow{2}{*}{$(-1,+1,-1)$} &$-1$ &$1$ &$(8,8,2+p)$\\
   &&&$+1$ &$0$ &\\
    \bottomrule
\end{tabular}
 \end{table}

\section{Eichler orders and supersingular j-invariants}

By Proposition \ref{p21}, the fiber above the Eichler order $\mathcal{O}_c(q,r)$ or $\mathcal{O}'_c(q,r')$ along the map $\mathcal{O}(\cdot,\cdot)$ contains the equivalence classes $(E,G)$, $(E/G,\hat{G})$, $(E^p,G^p)$ and $((E/G)^p,\hat{G}^p)$ with $|G|=|\hat{G}|=c$. Moreover, we have $\mathbb{Z}[\sqrt{-cp}] \subset \mathcal{O}_c(q,r)$ and $\mathbb{Z}[\sqrt{-cp}] \subset \mathcal{O}'_c(q,r')$, so $(E/G,\hat{G})$ is isomorphic to $(E^p,G^p)$ and $(E,G)$ is isomorphic to $((E/G)^p,\hat{G}^p)$.

In the following, we will show that the Eichler order $\mathcal{O}_c(q,r)$ or $\mathcal{O}'_c(q,r')$ can represent a binary quadratic form in the genus class $\Lambda(q)$ or $\Lambda'(q)$ respectively. On the contrary, every reduced binary quadratic form in the genus class $\Lambda(q)$ or $\Lambda'(q)$ corresponds to a pair of supersingular $j$-invariants $(j,j^p)$. As in \cite{MR3240797}, we define $\mathcal{O}_c(q,r)^{T}=\{ 2\gamma-\text{Trd}(\gamma) : \gamma \in \mathcal{O}_c(q,r)\}$ and $\mathcal{O}'_c(q,r')^{T}=\{ 2\gamma-\text{Trd}(\gamma) : \gamma \in \mathcal{O}'_c(q,r')\}$.

\begin{proposition}
If $\gamma \in \mathcal{O}_c(q,r)$ (resp. $\mathcal{O}'_c(q,r')$) with $\operatorname{Trd}(\gamma \alpha)=0$, then the reduced norm $\operatorname{Nrd}(2\gamma-\operatorname{Trd}(\gamma))$ (resp. $\operatorname{Nrd}(\gamma-\operatorname{Trd}(\gamma)/2)$) is a binary quadratic form in the genus class $\Lambda(q)$ (resp. $\Lambda'(q)$) respectively. Denote its reduced form by $(a,b,a')$. If $a>4$ (resp. $a>1$), then the first two successive minima of $\mathcal{O}_c(q,r)^T$ (resp. $\mathcal{O}'_c(q,r')^T$) are $a$ (resp. $4a$) and $a'$ (resp. $4a'$) respectively.
\end{proposition}
\begin{proof}
We can write $\gamma=w+x \frac{1+\beta}{2}+y \frac{\alpha(1+\beta)}{2}+z \frac{(r+\alpha)\beta}{q} \in \mathcal{O}_c(q,r)$ with $w,x,y,z \in \mathbb{Z}$. We have
$$\operatorname{Trd}(\gamma)= 2w + x $$ and $$\operatorname{Nrd}(\gamma)= \left( w + \frac{x}{2} \right)^2 + \left(\frac{x}{2}+\frac{rz}{q} \right)^2q + \frac{y^2}{4}cp+\left(\frac{y}{2}+\frac{z}{q} \right)^2cpq.$$
Moreover, we have
$$\operatorname{Nrd}(2\gamma-\operatorname{Trd}(\gamma))= cp y^2 + q \left( x+\frac{2zr}{q} \right)^2 + {cpq}\left( y+\frac{2z}{q}\right)^2,$$
which is a ternary quadratic form.
As we see, $\operatorname{Trd}(\gamma \alpha)=0$ if and only if $y=0$, and we have
$$\operatorname{Nrd}(2\gamma-\operatorname{Trd}(\gamma))=q x^2+4rxz+\frac{4r^2+4cp}{q}z^2,$$
which is a binary quadratic form in the genus class $\Lambda(q)$.

Assume that the form $(q,4r, \frac{4r^2+4cp}{q})$ is equivalent to a reduced form $(a,b,a')$ where $a<4\sqrt{cp/3}$ and $a \le a' < cp$ if $a>4$. We have $\text{Nrd}(2\gamma-\text{Trd}(\gamma))>cp$ if $y \neq 0$, so the first two successive minima of $\mathcal{O}_c(q,r)^T$ are $a$ and $a'$ if $a>4$.

Similarly, if $\gamma \in \mathcal{O}'_c(q,r')$ with $\operatorname{Trd}(\gamma \alpha)=0$, then the reduced norm equation $\operatorname{Nrd}(\gamma-\operatorname{Trd}(\gamma)/2)$ is a binary quadratic form $(q,r', \frac{(r')^2+cp}{4q})$ in the genus class $\Lambda'(q)$. Assume that the form $(q,r', \frac{(r')^2+cp}{4q})$ is equivalent to a reduced form $(a,b,a')$ where $a<\sqrt{cp/3}$ and $a < a' < cp/4$ if $a>1$. It follows that the first two successive minima of $\mathcal{O}'_c(q,r')^T$ are $4a$ and $4a'$.

\end{proof}
\begin{remark}
If $\gamma \in \mathcal{O}(q,r)$ (resp. $O'(q,r')$), then the reduced norm $\operatorname{Nrd}(2 \gamma-\operatorname{Trd}(\gamma))$ (resp. $\operatorname{Nrd}( \gamma-\operatorname{Trd}(\gamma)/2)$) is a ternary quadratic form. Moreover, this ternary quadratic form can represent a binary quadratic form $(q,4r,\frac{4r^2+4cp}{q})$ (resp. $(q,r,\frac{(r')^2+cp}{4q})$) in the genus class $\Lambda(q)$ (resp. $\Lambda'(q)$) (see \cite[Chapter 3]{Dickson1930}). In this paper, we say that each Eichler order $\mathcal{O}_c(q,r)$ (resp. $\mathcal{O}'_c(q,r')$) can represent a binary quadratic form in the genus class $\Lambda(q)$ (resp. $\Lambda'(q)$).
\end{remark}

\begin{remark}
If the Eichler order $\mathcal{O}_c(q,r)$ can represent $(3,*,*)$ (resp. $(4,*,*)$), then the imaginary quadratic order $\mathbb{Z}[\frac{1+\sqrt{-3}}{2}]$ (resp. $\mathbb{Z}[\sqrt{-1}]$)) can be embedded into $\mathcal{O}_c(q,r)$. It follows that $\mathcal{O}_c(q,r)$ corresponds to the supersingular $j$-invariant $0$ (resp. $1728$). If the Eichler order $\mathcal{O}'_c(q,r')$ can represent $(1,*,*)$, then it corresponds to the supersingular $j$-invariant $1728$.
\end{remark}

On the contrary, we want to show that every form in the genus class $\Lambda(q)$ or $\Lambda'(q)$ corresponds to a pair of equivalence classes $(E,G)$ and $(E^p,G^p)$.

Let $D$ be the discriminant of an imaginary quadratic order $O$. The Hilbert class polynomial \cite[\S 13]{MR3236783} $H_D(X)$ is defined as
$$H_D(X):=\prod_{j(E) \in \text{Ell}_O(\mathbb{C})}(X-j(E)),$$
where $\text{Ell}_O(\mathbb{C})=\{j(E) \mid \text{End}(E/\mathbb{C}) \cong O\}$.
We study the number of common roots of different Hilbert class polynomials. Write
$$J(D_1,D_2)=\prod_{\substack{[\tau_1],[\tau_2] \\ \text{disc}(\tau_i)=D_i}}(j(\tau_1)-j(\tau_2)),$$
where $[\tau_i]$ runs over all elements of the upper half-plane with discriminant $D_i$ modulo $\text{SL}_2(\mathbb{Z})$. $J(D_1,D_2)$ can be viewed as the resultant of Hilbert class polynomials $H_{D_1}(X)$ and $H_{D_2}(X)$.

Gross and Zagier \cite{MR772491} studied the prime factorizations of $J(D_1,D_2)$ where $D_1$ and $D_2$ are two fundamental discriminants which are relatively prime. Recently, Lauter and Viray \cite{MR3431591} studied the prime factorizations of $J(D_1,D_2)$ for arbitrary $D_1$ and $D_2$. Let $w_i$ denote the number of roots of unity in the imaginary quadratic order of discriminant $D_i$. Let $f_i$ denote the conductor of $D_i$. The following two propositions are from \cite{MR3431591}.

\begin{proposition}\label{t5}(\cite[Theorem 1.1]{MR3431591})
Let $D_1$, $D_2$ be any two distinct discriminants. Then there exists a function $F$ that takes non-negative integers of the form $(D_1 D_2 - x^2)/4$ to (possibly fractional) prime powers. This function satisfies
$$J(D_1, D_2)^{\frac{8}{w_1 w_2}} = \pm \prod_{\substack{x^2 \le D_1D_2 \\ x^2 \equiv D_1 D_2 \pmod 4}} F \left (\frac{D_1 D_2 - x^2}{4} \right ).$$
Moreover, $F(m) = 1$ unless either (1) $m = 0$ and $D_2 = D_1 \ell^{2k}$ for some prime $\ell$ or (2) the Hilbert symbol $(D_1, -m)_{\ell} = -1$ at a unique finite prime $\ell$ and this prime divides $m$. In both of these cases, $F(m)$ is a (possibly fractional) power of $\ell$.
\end{proposition}

\begin{proposition}\label{t6}(\cite[Theorem 1.5]{MR3431591})
Let $m$ be a positive integer of the form $(D_1D_2-x^2)/{4}$ and $\ell>2$ a fixed prime. If $m\ell$ is coprime to $f_1$, then we have
$$v_{\ell}(F(m))= \left \{
\begin{array}{ll}
  \frac{1}{e}\rho(m)\sum_{r \geq 1} \mathfrak{A}(m/\ell^r) & \text{if} \ \ell \nmid f_2 , \\
  \rho(m)\mathfrak{A}(m/\ell^{1+v(f_2)}) & \text{if} \ \ell \mid f_2 ,
\end{array}
\right. $$
where $e$ is the ramification degree of $\ell$ in $\mathbb{Q}(\sqrt{D_1})$ and
$$\rho(m)=\left \{
\begin{array}{ll}
  0 & \text{if} \ (D_1,-m)_p=-1 \ \text{for} \ p\mid D_1, p\nmid f_1 \ell, \\
  2^{\# \{p|(m,D_1):p\nmid f_2 \ \text{or} \ p=\ell \} } & \text{otherwise},
\end{array}
\right. $$
$$\mathfrak{A}(N)=\# \left \{
\begin{array}{cl}
   & N(\mathfrak{b})=N, \mathfrak{b} \ \text{invertible,} \\
  \mathfrak{b} \subseteq O_{D_1} & p\nmid \mathfrak{b} \ \text{for all} \ p\mid(N,f_2),p\nmid \ell D_1 \\
   & \mathfrak{p}^3 \nmid \mathfrak{b} \ \text{for all} \ \mathfrak{p}\mid p \mid (N,f_2,D_1), p\neq \ell
\end{array}
\right \}.$$
\end{proposition}

In the rest of this paper, we denote $D_1=-a$ (resp. $D_1=-4a$) and $D_2=-a'$ (resp. $D_2=-4a'$) if the reduced form $(a,b,a')$ belongs to the genus class $\Lambda (q)$ (resp. $\Lambda '(q)$). Denote $\overline{H}_{D_i}(X)=H_{D_i}(X)$ mod $p$. We show that $\gcd(\overline{H}_{D_1}(X),\overline{H}_{D_2}(X))=(X-j)$ with $j \in \mathbb{F}_p$ if the form $(a,b,a')$ has order $\le 2$ in \ref{Appendix}.

If the order of $(a,b,a')$ is bigger than $2$ and $|D_1| \le 4$, then we must have $a=3$. Since the form $(a,b,a')$ is in the class $\Lambda(q)$, we have $p \equiv 2 \pmod 3$, $c \equiv 1 \pmod 3$, $b=\pm 2$ and $a'=\frac{1+4cp}{3}$. As we know $0$ is the only root of the Hilbert class polynomial $H_{-3}(X)=X$. By Proposition \ref{t6}, we have $p \mid J(-3,-\frac{1+4cp}{3})$. It follows $\gcd(\overline{H}_{-3}(X),\overline{H}_{-\frac{1+4cp}{3}}(X))=X$. For other forms in the genus class $\Lambda (q)$ and $\Lambda'(q)$, we have the following proposition.

\begin{proposition}\label{p5}
Assume that $c < 3p/16$ is a prime. Let $(a,b,a')$ be a primitive reduced form in the genus class $\Lambda (q)$ (resp. $\Lambda'(q)$) with $a > 4$ (resp. $a > 1$). If the order of $(a,b,a')$ is bigger than $2$, then we have $p^2 \parallel J(-a,-a')$ (resp. $p^2 \parallel J(-4a,-4a')$).
\end{proposition}

\begin{proof}
If the form $(a,b,a')$ belongs to the genus class $\Lambda (q)$, then $b^2-4aa'=-16cp$. Assume that there exist integers $x$ and $k\ge 0$ such that $aa'-x^2=4kp$. We have the following inequation:
$$x^2 \le aa'=4cp+b^2/4 < 4cp + 4cp/3 =16cp/3.$$
This inequality holds since $|b|<a< \sqrt{16cp/3}$. Assume $c < 3p/16$, we have $x^2 < p^2$. Since $x^2 \equiv aa' \equiv b^2/4 \pmod p$ and $|x| < p$, we have $x = \pm b/2$ or $\pm (|b|/2-p)$. If $x=\pm (|b|/2-p)$, then $aa'-x^2=aa'-(|b|/2-p)^2=4cp+|b|p-p^2<0$ since $|b|<c< 3p/16$. It follows that $x = \pm b/2$ and $k=c$.

Let $m=cp$ and $\ell =p$. By Proposition \ref{t6}, we have $e=1$, $\rho(cp)=1$ for any $(a,b,a')$ with order $>2$. Moreover, $\mathfrak{A}(c)=2$ since $\left( \frac{-a}{c} \right)=\left( \frac{-q}{c} \right)=1$, and $\mathfrak{A}(cp/p^r) = 0$ for any $r>1$. We also have $w_1=w_2=2$ since $|D_2|> |D_1|>4$. It follows that $p^2 \parallel J(D_1,D_2)$.

The proof is similar if the form $(a,b,a')$ belongs to the genus class $\Lambda'(q)$.

\end{proof}

As defined before this proposition, we have that there exists at least one common root of $\overline{H}_{D_1}(X)$ and $\overline{H}_{D_2}(X)$.

\begin{theorem}\label{zt3}
Assume that $c < 3p/16$ is a prime. Let $(a,b,a')$ be a reduced form in the genus class $\Lambda (q)$ or $\Lambda' (q)$. If $j$ is a common root of $\overline{H}_{D_1}(X)$ and $\overline{H}_{D_2}(X)$, then $j\in \mathbb{F}_p$ if and only if there exist $x, y \in \mathbb{Z}$ with $\gcd(x,y)=1$ such that $x^2+ay^2=4c$ or $x^2+ay^2=c$ respectively. In this case, $\gcd(\overline{H}_{D_1}(X),\overline{H}_{D_2}(X))=(X-j)$. Otherwise, there exists $j \in \mathbb{F}_{p^2} \backslash \mathbb{F}_p$ such that $\gcd (\overline{H}_{D_1}(X),\overline{H}_{D_2}(X))=(X-j)(X-j^p)$.
\end{theorem}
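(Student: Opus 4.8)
Here is a plan of proof.

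\textbf{Step 1: reduce to the quaternionic picture and record the embedded orders.}
First I would observe that any common root $j$ of $\overline H_{D_1}(X)$ and $\overline H_{D_2}(X)$ is a supersingular $j$-invariant, hence lies in $\mathbb F_{p^2}$: since $p\mid D_1D_2$ and $D_1D_2=aa'$ (resp. $16aa'$) $\equiv(b/2)^2\pmod p$, the genus constraint $\left(\frac qp\right)=-1$ forces $\left(\frac{D_i}{p}\right)\in\{0,-1\}$, so $p$ does not split in $O_{D_i}$ and the reduction at $\mathfrak p$ of a curve with CM by $O_{D_i}$ is supersingular. Existence of such a $j$ is exactly what Proposition~\ref{p5} (together with the preceding discussion of the case $|D_1|\le 4$, and Appendix~A for forms of order $\le 2$) provides. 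Fix $E/\overline{\mathbb F}_p$ with $j(E)=j$ and put $\mathcal O=\text{End}(E)$. Evaluating the form $(a,b,a')$ at $(1,0)$ and $(0,1)$ and invoking the Proposition of Section~4 (the form $(q,4r,\cdot)$, resp. $(q,r',\cdot)$, represents every value its reduced form does), I obtain optimal embeddings of $O_{-a},O_{-a'}$ (resp. $O_{-4a},O_{-4a'}$) into $\mathcal O$, and also $\mathbb Z[\sqrt{-cp}]\subseteq\mathcal O$. Thus $\mathcal O$ contains trace-zero elements $\theta_1,\theta_2,\mu$ of reduced norms $a$, $a'$, $cp$; note the norm form of $\mathbb Z[\theta_1]=\mathbb Z+\mathbb Z\theta_1$ is exactly $x^2+ay^2$.

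\textbf{Step 2: the common factor has degree $1$ or $2$.}
By Proposition~\ref{p5}, $v_p\bigl(J(D_1,D_2)\bigr)=2$ whenever the form has order $>2$ and $a>4$ (resp. $a>1$) — the remaining cases being settled in the text ($a=3$, giving $j=0\in\mathbb F_p$) and Appendix~A (order $\le2$, giving $\gcd=(X-j)$, $j\in\mathbb F_p$). Reducing the resultant $J(D_1,D_2)=\pm\operatorname{Res}(H_{D_1},H_{D_2})$ modulo $p$, a positive $p$-valuation forces a nonconstant common factor; conversely each distinct common root contributes at least $1$ to $v_p\bigl(J(D_1,D_2)\bigr)$, and a common root outside $\mathbb F_p$ comes paired with its Frobenius conjugate, contributing another $1$. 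Hence, when $v_p\bigl(J(D_1,D_2)\bigr)=2$, either there is a single common root $j\in\mathbb F_p$ and $(X-j)$ exactly divides $\gcd(\overline H_{D_1},\overline H_{D_2})$, or there is a single conjugate pair $\{j,j^p\}$ with $j\in\mathbb F_{p^2}\setminus\mathbb F_p$ and $\gcd(\overline H_{D_1},\overline H_{D_2})=(X-j)(X-j^p)$. (That an $\mathbb F_p$-rational common root absorbs the whole valuation $2$ is consistent with the $w_1w_2/4$ factor in the Lauter–Viray formula, reflecting that such $j$ is a ramification point of the reduction because $\sqrt{-p}\in\text{End}(E)$.)

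\textbf{Step 3: which case occurs, via $x^2+ay^2=4c$ (resp. $=c$).}
Since $E$ carries a $(c,\epsilon)$-structure $\phi\colon E\to E^p$ with $\mu=\pi_p\circ\phi$ and $G=\ker\phi$, we have $j\in\mathbb F_p\iff E\cong E^p\iff\mathbb Z[\sqrt{-p}]\subseteq\mathcal O\iff\mathcal O$ contains a trace-zero element of reduced norm $p$ (the $\mathbb F_p$-Frobenius), using that a degree-$p$ endomorphism of a supersingular curve is inseparable. I would prove this is equivalent to the primitive solvability of $x^2+ay^2=4c$ (resp. $x^2+ay^2=c$) as follows. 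From a primitive solution, the element $\beta=\tfrac{x+y\theta_1}{2}$ when $a\equiv3\pmod4$, or $\beta=\tfrac x2+y\sqrt{-a/4}$ when $4\mid a$ (and the analogue for $\Lambda'(q)$), lies in the embedded $O_{-a}\subseteq\text{End}(E,G)$, has $\text{Nrd}(\beta)=c$ and cyclic kernel of order $c$; using $\mathbb Z[\sqrt{-cp}]\subseteq\text{End}(E,G)$ and that the order-$c$ part of $\ker\mu$ is $G$, one shows $\ker\beta=G$, hence $\mu\in\mathcal O\beta$ and $\pi:=\mu\beta^{-1}\in\mathcal O$ has reduced norm $p$, so $j\in\mathbb F_p$. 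Conversely, a trace-zero norm-$p$ element $\pi\in\mathcal O$ yields $\mu\pi^{-1}\in\mathcal O$ of norm $c$ with order-$c$ kernel $G$, which one then moves (modulo $\mathcal O^{\times}$ and the $\mathbb Z[\sqrt{-cp}]$-orientation action) into the embedded copy of $O_{-a}$ to produce the primitive representation. Combined with Step~2 this gives $\gcd(\overline H_{D_1},\overline H_{D_2})=(X-j)$ exactly when the representation exists, and $=(X-j)(X-j^p)$, $j\notin\mathbb F_p$, otherwise. I expect the main obstacle to be the forward half of this step: translating the abstract norm-$p$ (equivalently norm-$c$-with-kernel-$G$) element into a \emph{primitive} representation by the specific form $(a,b,a')$ to which $\text{End}(E,G)$ reduces, which will require the explicit bases of $\mathcal O_c(q,r)$, $\mathcal O'_c(q,r')$ and the reduction-to-a-form computation of Section~4 rather than just optimal-embedding counts.
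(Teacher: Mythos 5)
Your overall architecture matches the paper's: use the valuation computation $p^2 \parallel J(D_1,D_2)$ (Proposition \ref{p5}) to cap the common factor at degree two, then decide $\mathbb{F}_p$-rationality of the common root by whether $c$ (resp.\ $4c$) is represented by $x^2+ay^2$. Your direction ``primitive representation $\Rightarrow j\in\mathbb{F}_p$'' is a legitimate alternative to the paper's argument: you factor $\mu=\psi\circ\beta$ with $\deg\psi=p$ and conclude inseparability, whereas the paper runs a Gauss-style descent on forms (building $aX^2+2bXY+4(a'-py^2)Y^2$ of discriminant $-16x^2p$, deriving it from a form of discriminant $-16p$, and exhibiting an explicit $\mathbb{F}_p$-rational common root together with the divisibility $(X-j)^2\mid \overline{H}_{-a'}(X)$). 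Your route is cleaner conceptually, though you would still need to settle $\ker\beta=G$ versus $\ker\bar\beta=G$ (the inclusion $\beta(G)\subseteq G$ only forces one of the two; replacing $\beta$ by $\bar\beta$ fixes it).

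The genuine gap is the converse direction, which you yourself flag as the main obstacle and do not resolve. The paper's key idea here is a smallness/successive-minima argument that your proposal is missing: since $(a,b,a')$ is reduced and $c<3p/16$, one has $c<\sqrt{3cp}/4<a'/4$, so a trace-adjusted element of $\operatorname{End}(E,G)$ of reduced norm $c$ (the degree-$c$ endomorphism obtained from $E\cong E^p$) is \emph{forced} to lie in the rank-two sublattice $\mathbb{Z}+\mathbb{Z}\frac{1+\gamma}{2}$ spanned by the first successive minimum, whose norm form is $\frac{(2x'+y')^2+ay'^2}{4}$; this is exactly what produces $x^2+ay^2=4c$ with $\gcd(x,y)=1$. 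Your proposed substitute --- ``moving'' the norm-$c$ element into the embedded copy of $O_{-a}$ modulo $\mathcal{O}^{\times}$ and the orientation action --- does not work as stated: conjugation by units preserves reduced norm but gives no reason for any conjugate of a given norm-$c$ element to land in one prescribed quadratic suborder, and the orientation action permutes curves rather than suborders of a fixed $\operatorname{End}(E,G)$. A second, smaller gap: your claim that an $\mathbb{F}_p$-rational common root absorbs the full valuation $2$ (so that $\gcd=(X-j)$ exactly, with no second common root) is only asserted ``by consistency''; the paper proves it by exhibiting two essentially distinct embeddings of $O_{D_2}$ through the two solution pairs $\pm\bigl(\frac{2ry-i}{2},y,-\frac{k+qy}{2}\bigr)$ and $\pm\bigl(\frac{i+2ry}{2},y,\frac{k-qy}{2}\bigr)$, giving $(X-j)^2\mid\overline{H}_{D_2}(X)$ while $(X-j)\parallel\overline{H}_{D_1}(X)$, which exhausts the valuation.
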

\begin{proof}
Let $(a,b, a')$ be a primitive reduced form in the genus class $\Lambda (q)$.

If $(a,b, a')$ has order $\le 2$, then $\gcd(\overline{H}_{D_1}(X),\overline{H}_{D_2}(X))=(X-j)$ with $j \in \mathbb{F}_p$. Moreover, we can find $x, y \in \mathbb{Z}$ such that $x^2-D_1y^2=4c$ (Appendix A).

If the order of $(a,b,a')$ is bigger than $2$ and $|D_1| \le 4$, then we have $-D_1=a=3$ and $c \equiv 1 \pmod 3$. It is easy to show that there exist $x, y \in \mathbb{Z}$ such that $x^2+3y^2=4c$ and $\gcd(\overline{H}_{D_1}(X),\overline{H}_{D_2}(X))=X$. In the following, we assume that the order of $(a,b,a')$ is bigger than $2$ and $a>4$. It is easy to show $c \nmid a$.

There exists a prime $q$ satisfying $(\ref{e1})$, which can be represented by $(a,b,a')$. Moreover, there exist a supersingular elliptic curve $E$ and a cyclic subgroup $G$ of $E$ of order $c$ such that $\text{End}(E,G)$ and $\text{End}(E^p,G^p)$ are isomorphic to $\mathcal{O}_c(q,r)$. Because $a$ and $a'$ are the first two successive minima of $\mathcal{O}_c(q,r)$, we have $(X-j(E)) \mid \gcd(\overline{H}_{-a}(X),\overline{H}_{-a'}(X))$.
By Proposition \ref{p5}, we have $\gcd(\overline{H}_{-a}(X),\overline{H}_{-a'}(X))=(X-j(E))(X-j(E^p))$ if $j(E) \in \mathbb{F}_{p^2} \backslash \mathbb{F}_p$.

Because $a<4\sqrt{cp/3}<p$, we have $(X-j(E)) \parallel \overline{H}_{-a}(X)$. If $j(E) \in \mathbb{F}_p$, then there exists an endomorphism $\phi: E \to E$ with $\deg(\phi)=c$. We can assume $a \equiv 3 \pmod 4$ and $a' \equiv 3 \pmod 4$, and other cases are similar.
Notice that $a$ and $a'$ are the first two successive minima of $\mathcal{O}_c(q,r)$, so there exist two elements $\gamma, \gamma' \in \text{End}(E)$ with $\text{Nrd}(\gamma)=a$, $\text{Nrd}(\gamma')=a$ and $\text{Trd}(\gamma)=\text{Trd}(\gamma')=0$. Moreover, $\mathbb{Z}[\frac{1+\gamma}{2}] \subseteq \text{End}(E)$. Since $c < 3p/16$, we have $c < \sqrt{3cp}/4  < a'/4$. So there exist $x', y' \in \mathbb{Z}$, such that $\text{Nrd}(x'+y'\frac{1+\gamma}{2})=c$. Let $x=2x'+y'$, $y=y'$, we have $x^2+ay^2=4c$. It is easy to show $\gcd(x,y)=1$.

On the contrary, we can assume that there exist $x, y \in \mathbb{Z}$ such that $x^2+ay^2=4c$. Without loss of generality, we also assume $a \equiv 3 \pmod 4$. The equation $b^2-4aa'=-16cp=-4p(x^2+ay^2)$ implies that $(2b)^2-4a(4(a'-py^2))=-16x^2p$. So we get a primitive form $f(X,Y)=aX^2+2bXY+4(a'-py^2)Y^2$ with discriminant $-16x^2p$.

The form $f(X,Y)$ can be derived from a primitive form $g(X',Y')$ with discriminant $-16p$. Without loss of generality, we assume $g(X',Y')=qX'^2+4rX'Y'+\frac{4(r^2+p)}{q}Y'^2$. We can find $h,i,j,k \in \mathbb{Z}$ with $hk-ij= \pm x$ such that $X=h X'+i Y'$, $Y=j X'+k Y'$, $a=g(h,j)$ and $4(a'-py^2)=g(i,k)$.

Assume that the supersingular elliptic curve $E$ defined over $\mathbb{F}_p$ satisfying $\text{End}(E) \cong  \mathcal{O}(q,r)$. Notice that $a=g(h,j)$. Since $ 2 \nmid a$, we have $2 \nmid h$. If there exists an odd prime $(c \neq )p_i \mid \gcd (h,j)$, then $p_i \mid a$ and $p_i \mid x$. Moreover, $p_i \mid x^2+ay^2(=4c)$. This is a contradiction. So $\gcd (h,j)=1$. It follows that $j(E)$ is a $\mathbb{F}_p$-root of $\overline{H}_{-a}(X)$.

Consider the following equation:
\begin{align}\label{e7}
p y^2 + q \left( X+\frac{2Zr}{q} \right)^2 + pq \left( y+ \frac{2Z}{q} \right)^2 = a'.
\end{align}
Let $X'=-4pX+4rp$ and $Z'=-4pZ-2pq$. We get the equation
\begin{align}\label{e8}
qX'^2+4rX'Z'+\frac{4(r^2+p)}{q}Z'^2=16p^2(a'-py^2).
\end{align}
By assumption, $4(a'-py^2)$ can be represented by $(q,4r, \frac{4(r^2+p)}{q})$. Because the prime $p$ is ramified in the field $\mathbb{Q}(\sqrt{-p})$, the solutions of (\ref{e8}) are $\pm(2pi,2pk)$. It easy to show $2 \mid i$. If $2 \mid k$, then $2 \mid x$ which implies $2 \mid y$ since $ 2 \nmid a$. This is a contradiction, since $\gcd(x,y)=1$. It follows $2 \nmid k$. Moreover, we have $x \equiv hk \equiv 1 \pmod 2$. We also have $2 \nmid y$. The equation (\ref{e7}) has solutions
$$\pm \left(\frac{2ry-i}{2},y,-\frac{k+qy}{2} \right) \ \text{and} \ \pm \left( \frac{i+ 2ry}{2},y,\frac{k-qy}{2} \right).$$

Notice that $ \gcd \left(\frac{2ry-i}{2},y,-\frac{k+qy}{2} \right)=\gcd \left(\frac{i+ 2ry}{2},y,\frac{k-qy}{2} \right)= 1$. It follows that $j(E)$ is also a root of $\overline{H}_{-a'}(X)$ and $(X-j(E))^2 \mid \overline{H}_{-a'}(X)$.

The proof is similar if $(a,b,a')$ is a reduced form in the genus class $\Lambda'(q)$.

\end{proof}

\begin{remark}
Let $(a,b,a')$ be a reduced form in the genus class $\Lambda(q)$ with order $>2$.
Assume that there exist $x,y \in \mathbb{Z}$ such that $x^2+ay^2=4c$. One can show that the number of solutions of $X^2 \equiv -16p \pmod {4a}$ is equal to the number of solutions of $X^2 \equiv -16cp \pmod {4a}$ with $-a < X \le a$. It follows that the number of reduced forms $(a,*,*)$ with discriminant $-16cp$ is the same as the number of $\mathbb{F}_p$-roots of $\overline{H}_{-a}(X)$. Similar result holds if $(a,b,a')$ is a reduced form in the genus class $\Lambda'(q)$.

\end{remark}

As we see, the fiber above $\mathcal{O}_c(q,r)$ or $\mathcal{O}'_c(q,r')$ along the map $\mathcal{O}(\cdot,\cdot)$ contains precisely the equivalence classes $(E,G)$ and $(E/G,\hat{G})$.
Considering the number of pairs $(E,G)$ where $E$ is a $\mathbb{Z}[\sqrt{-cp}]$-oriented supersingular elliptic curve, we have the following theorem.

\begin{theorem}\label{zt4}
Let $c$ be a prime with $c < 3p/16$.
If $E$ is a $\mathbb{Z}[\sqrt{-cp}]$-oriented (resp. $\mathbb{Z}[\frac{1+\sqrt{-cp}}{2}]$-oriented) supersingular elliptic curve, then there exists a $c$-isogeny $\phi: E \to E^p$ with $\ker(\phi)=G$ and $\operatorname{End}(E,G)$ is isomorphic to some $\mathcal{O}_c(q,r)$ (resp. $\mathcal{O}'_c(q,r')$).
\end{theorem}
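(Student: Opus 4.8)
The plan is first to convert the hypothesis into an Eichler order carrying a square root of $-cp$. Suppose $E$ is $\mathbb{Z}[\sqrt{-cp}]$-oriented. By the Remark in Section 2.4 there is a degree-$c$ isogeny $\phi\colon E\to E^{p}$ making $(E,\phi)$ a supersingular $(c,\epsilon)$-structure; set $G=\ker(\phi)$ and $\mu=\pi_{p}\circ\phi\in\operatorname{End}(E)$, so $\mu^{2}=-cp$. Since $\pi_{p}$ is purely inseparable, $\ker(\mu)(\overline{\mathbb{F}}_{p})=\ker(\phi)(\overline{\mathbb{F}}_{p})=G$, hence $\mu(G)=\{O\}\subseteq G$ and $\mu\in\operatorname{End}(E,G)$. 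By Proposition \ref{p21}, $\mathcal{O}:=\operatorname{End}(E,G)$ is an Eichler order of level $c$ in $B_{p,\infty}$ and it contains $\mathbb{Z}[\mu]\cong\mathbb{Z}[\sqrt{-cp}]$; moreover, when $cp\equiv 3\pmod 4$ and $E$ lies in the surface, the same argument applied to $\frac{1+\mu}{2}\in\operatorname{End}(E)$ (which stabilizes $G$ because $c$ is odd and $\mu(G)=\{O\}$) shows $O_{K}\subseteq\mathcal{O}$. So it suffices to prove: every Eichler order $\mathcal{O}$ of level $c$ in $B_{p,\infty}$ into which $\mathbb{Z}[\sqrt{-cp}]$ --- or $O_{K}$, when $cp\equiv 3\pmod 4$ --- is optimally embedded is isomorphic to some $\mathcal{O}_{c}(q,r)$, respectively $\mathcal{O}'_{c}(q,r')$.

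\textbf{Attaching a reduced form.} Next I would attach to such an $\mathcal{O}$ a binary quadratic form. The computation in the proof of the Proposition of Section 4 reducing $\mathcal{O}_{c}(q,r)$ and $\mathcal{O}'_{c}(q,r')$ to forms uses only $\mu^{2}=-cp$, the level $c$ and the discriminant $c^{2}p^{2}$, so it applies verbatim: restricting the reduced norm to the rank-$2$ lattice of trace-zero elements of $\mathcal{O}$ anticommuting with $\mu$ and passing to the unique reduced representative yields a primitive reduced form $f_{\mathcal{O}}=(a,b,a')$ whose first two coefficients are the first two successive minima of $\mathcal{O}$, of discriminant $-16cp$ when $\mathbb{Z}[\sqrt{-cp}]$ is the optimally embedded order and $-cp$ when $O_{K}$ is. I would then show $f_{\mathcal{O}}$ lies in the genus class $\Lambda(q_{0})$, resp. $\Lambda'(q_{0})$, of the tables of Section 3: any value $a$ represented by $f_{\mathcal{O}}$ is $\operatorname{Nrd}(\gamma)$ for a trace-zero $\gamma\in\mathcal{O}$ anticommuting with $\mu$, whence $1,\mu,\gamma,\mu\gamma$ is a basis of $B_{p,\infty}$ and $B_{p,\infty}\cong H(-cp,-a)$; reading off that $(-cp,-a)_{\ell}=-1$ exactly at $\ell\in\{p,\infty\}$ pins down $\chi_{1}(a)$, $\chi_{2}(a)$ (and $\varepsilon(a)$ when $c=2$) to the values in the tables, while the remaining character $\delta$ is governed by the $2$-adic structure, i.e. by whether $\mathbb{Z}[\sqrt{-cp}]$ or $O_{K}$ is optimally embedded --- which matches the two discriminant cases.

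\textbf{Conclusion and the hard point.} Finally I would feed $f_{\mathcal{O}}$ back through the Section 3--4 dictionary. By Corollary \ref{c1}, the isomorphism class of $\mathcal{O}_{c}(q,r)$, resp. $\mathcal{O}'_{c}(q,r')$, is determined by the reduced form in $\Lambda(q)$, resp. $\Lambda'(q)$, representing $q$ (with $f$ and $f^{-1}$ in the same class), and conversely, by Dirichlet's theorem together with genus theory every reduced form in $\Lambda(q_{0})$, resp. $\Lambda'(q_{0})$, represents a prime $q$ satisfying (\ref{e1}) --- for odd $c$ the residue modulo $8$ of a represented prime is unconstrained by the genus, so $q\equiv 3\pmod 8$ is attainable, and for $c=2$ the $\varepsilon$-character already forces $q\equiv 7\pmod 8$. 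Hence $\mathcal{O}\mapsto f_{\mathcal{O}}$ puts the isomorphism classes of all the orders $\mathcal{O}_{c}(q,r),\mathcal{O}'_{c}(q,r')$ in bijection with the reduced forms in $\Lambda(q_{0})\cup\Lambda'(q_{0})$ modulo $f\sim f^{-1}$, and since $f_{\operatorname{End}(E,G)}$ is one of these forms the theorem follows. I expect the genuine obstacle to be precisely the step that the attached reduced form determines the Eichler order up to isomorphism among Eichler orders of level $c$ containing $\mathbb{Z}[\sqrt{-cp}]$; if that cannot be extracted cleanly from Corollary \ref{c1} alone, the fallback is a counting argument: using the Proposition of Onuki and Arpin (applicable here because $p$ ramifies in $K$, giving $|SS_{O}^{pr}(p)|=h(O)$), the identification of $(E,G)$ with $(E^{p},G^{p})$ and with the conjugate orientation, and the $j\in\mathbb{F}_{p}$ and $j\in\{0,1728\}$ corrections, one shows that the number of isomorphism classes of $\operatorname{End}(E,G)$ arising from $\mathbb{Z}[\sqrt{-cp}]$-oriented $E$ equals $h(-4cp)/4+\eta/2+h(-cp)/4+\eta'/2$, the exact count from the Theorem of Section 3, so the injection supplied by Section 4 is forced to be onto.
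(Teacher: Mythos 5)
Your instinct about where the difficulty lies is exactly right, and it is fatal to your primary route as written. Corollary \ref{c1} only compares two orders that are both already of the shape $\mathcal{O}_c(q_i,r_i)$; it says nothing about whether an abstract Eichler order of level $c$ with an optimal embedding of $\mathbb{Z}[\sqrt{-cp}]$ is isomorphic to \emph{any} $\mathcal{O}_c(q,r)$, so producing via Dirichlet a prime $q$ represented by $f_{\mathcal{O}}$ does not let you conclude $\mathcal{O}\cong\mathcal{O}_c(q,r)$. Moreover, the claim that the form-attaching computation ``applies verbatim'' to an abstract $\mathcal{O}$ is too quick: the paper's computation starts from the explicit basis of $\mathcal{O}_c(q,r)$, and for an abstract order you would still have to prove that the norm form on the trace-zero elements anticommuting with $\mu$ is primitive of discriminant exactly $-16cp$ (resp.\ $-cp$), which requires a local analysis you have not supplied.

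The paper proves the theorem by your fallback, with the bookkeeping done at the level of pairs $(E,G)$ rather than isomorphism classes of orders. Concretely: for each $\mathcal{O}_c(q,r)$ with attached reduced form $(a,b,a')$, Theorem \ref{zt3} decides whether $\gcd(\overline{H}_{D_1}(X),\overline{H}_{D_2}(X))$ equals $(X-j)$ with $j\in\mathbb{F}_p$ or $(X-j)(X-j^p)$, and in either case the order is exhibited as $\operatorname{End}(E,G)$ for exactly one pair (form of order $\le 2$) or exactly two pairs $(E,G)$ and $(E^p,G^p)$ (form of order $>2$). Summing, the $h(-4cp)/4+\eta/2$ classes of $\mathcal{O}_c(q,r)$ account for $h(-4cp)/2$ pairs, and the $h(-cp)/4+\eta'/2$ classes of $\mathcal{O}'_c(q,r')$ for $h(-cp)/2$ pairs; since Proposition 2.2 (applicable because $p$ ramifies in $K$) gives exactly $h(O)/2$ oriented curves once a choice of $\iota$ is fixed, the two counts coincide and every $\operatorname{End}(E,G)$ arising from an oriented curve must be hit. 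So keep the counting argument as the actual proof, and replace your order-class count $h(-4cp)/4+\eta/2+h(-cp)/4+\eta'/2$ by the pair counts above: the map from pairs to isomorphism classes of orders is not injective, so matching numbers of order classes alone would not force surjectivity.
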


\begin{proof}
The supersingular elliptic curve $E$ is $\mathbb{Z}[\sqrt{-cp}]$-oriented or $\mathbb{Z}[\frac{1+\sqrt{-cp}}{2}]$-oriented if and only if there exists a $c$-isogeny $\phi:E \to E^p$ with $\ker(\phi)=G$ (see \cite[Lemma 6]{MR2496385}).

As we know, each Eichler order $\mathcal{O}_c(q,r)$ (resp. $\mathcal{O}'_c(q,r')$) along the map $\mathcal{O}(\cdot,\cdot)$ contains the equivalence classes $(E,G)$ and $(E^p,G^p)$, where $E$ is $\mathbb{Z}[\sqrt{-cp}]$-oriented (resp. $\mathbb{Z}[\frac{1+\sqrt{-cp}}{2}]$-oriented). We will show that the number of isomorphic classes of $\mathcal{O}_c(q,r)$'s coincides with the number of $\mathbb{Z}[\sqrt{-cp}]$-oriented supersingular elliptic curves.

The number of equivalent classes of $\mathcal{O}_c(q,r)$ (resp. $\mathcal{O}'_c(q,r')$) is $h(-4cp)/4+\eta/2$ (resp. $h(-cp)/4+\eta'/2$) by Theorem \ref{zt2}. If an Eichler order $\mathcal{O}_c(q,r)$ (resp. $\mathcal{O}'_c(q,r')$) can represent a quadratic form $(a,b,a')$ in the genus class $\Lambda(q)$ (resp. $\Lambda'(q)$), we divide into two cases.

If the form $(a,b,a')$ has order $\le 2$, then we have $\gcd(\overline{H}_{-a}(X),\overline{H}_{-a'}(X))=(X-j)$ with $j \in \mathbb{F}_p$. Moreover, there exist $x,y \in \mathbb{Z}$ such that $x^2+ay^2=4c$ or $x^2+ay^2=c$. It follows that there are two elements $\lambda, \overline{\lambda} \in \text{End}(E(j))$ with $\text{Nrd}(\lambda)=\text{Nrd}(\overline{\lambda})=c$. Denote $G=\ker(\lambda)$ and $\overline{G}=\ker(\overline{\lambda})$.

If $c \mid a$, then $c$ is ramified in $\mathbb{Q}(\sqrt{-a})$. There exists $\mu \in \text{Aut}(E(j))$ such that $\overline{\lambda}=\mu \lambda$. We have $\text{End}(E(j),G) \cong \text{End}(E(j^p),G^p) \cong \mathcal{O}_c(q,r)$. If $c \nmid a$, then we have $a=1$ or $4$. As we know, $\mathcal{O}_c(q_1,r_1) \cong \mathcal{O}'_c(q_2,r_2)$ if and only if $\mathcal{O}_c(q_1,r_1)$ (resp. $\mathcal{O}'_c(q_2,r_2)$) can represent the form $(4,0,cp)$ (resp. $(1,1,\frac{1+cp}{4})$). These two forms correspond to the same $j$-invariant $1728$ with two different order-$c$ subgroups since $c$ splits in $\mathbb{Q}(\sqrt{-1})$. In conclusion, each reduced form in the genus class $\Lambda(q)$ or $\Lambda'(q)$ with order $\le 2$ corresponds to one supersingular elliptic curve with a subgroup of order $c$.

Assume that the order of the reduced form $(a,b,a')$ is bigger than $2$. We have $c \nmid a$. If $\gcd (\overline{H}_{D_1}(X),\overline{H}_{D_2}(X))=(X-j)$ with $j\in \mathbb{F}_p$, then $\text{End}(E(j),G) \not \cong \text{End}(E(j^p),G^p)$. If $\gcd (\overline{H}_{D_1}(X),\overline{H}_{D_2}(X))=(X-j)(X-j^p)$, then $\text{End}(E(j),G)$ and $\text{End}(E(j^p),G^p)$ are not isomorphic obviously. Moreover, $\mathcal{O}_c(q_1,r_1)$ is not isomorphic to $\mathcal{O}'_c(q_2,r_2)$, and each reduced form corresponds to $\text{End}(E(j),G)$ and $\text{End}(E(j^p),G^p)$.

By simple computation, these $h(-4cp)/4+\eta/2$ (resp. $h(-cp)/4+\eta'/2$) many isomorphism classes of Eichler orders $\mathcal{O}_c(q,r)$ (resp. $\mathcal{O}'_c(q,r')$) correspond to $h(-4cp)/2$ (resp. $h(-cp)/2$) many isomorphic classes of $\mathbb{Z}[\sqrt{-cp}]$-oriented (resp. $\mathbb{Z}[\frac{1+\sqrt{-cp}}{2}]$-oriented) supersingular elliptic curves $(E,G)$'s.

Considering quadratic twists of these curves (see \cite[Chapter 2]{chenu2021higher}), there are $h(-4cp)$ (resp. $h(-cp)$) many $\mathbb{Z}[\sqrt{-cp}]$-oriented ($\mathbb{Z}[\frac{1+\sqrt{-cp}}{2}]$-oriented) supersingular elliptic curves $(E,G)$'s. By Proposition \ref{p22}, this coincides with the cardinality of the set $SS_O^{pr}(p)$ where $O=\mathbb{Z}[-cp]$ (resp. $O=\mathbb{Z}[\frac{1+\sqrt{-cp}}{2}]$). It follows that $\operatorname{End}(E,G)$ is isomorphic to some $\mathcal{O}_c(q,r)$ if $E$ is $\mathbb{Z}[\sqrt{-cp}]$-oriented (resp. $\mathbb{Z}[\frac{1+\sqrt{-cp}}{2}]$-oriented).

\end{proof}

Note that different Eichler orders can correspond to the same $j$-invariant but with different $G$'s.

\begin{example}\label{example1}
Let $p=101$ and $c=3$. Denote $\mathbb{F}_{p^2}=\mathbb{F}_p(\alpha)$ with $\alpha^2+2=0$. There exist $5$ reduced forms in the genus class $\Lambda (q)$, and only one form has order $\le 2$.
We give these forms and the corresponding supersingular elliptic curves in the following table.
\begin{table}[H]
   \centering
    \caption{Reduced forms in $\Lambda'(q)$ and the corresponding supersingular elliptic curves}
    \footnotesize
   \begin{tabular}{lllll}
	\toprule
	Eichler order  &quadratic form  &$j$-invariant &elliptic curve   &cyclic group \\
	\midrule
	$\mathcal{O}'_3(1619,1215)$ &  $(3,\ 3,26)$ & $66$ & $E_1:y^2=x^3+8x+85$ & $\langle (50,12) \rangle$ \\
 \midrule
	\multirow{2}{*}{$\mathcal{O}'_3(11,7)$} &\multirow{2}{*}{$(8,\pm 7,11)$} &   $10\alpha+37$ & $E_2:y^2=x^3+(22\alpha+72)x+93\alpha+37$  & $\langle (49,50 \alpha+41) \rangle$\\
     &  & $91\alpha+37$ & $E_3:y^2=x^3+(79\alpha+72)x+8\alpha+37$  & $\langle (49,51 \alpha+41)\rangle$\\
     \midrule
	\multirow{2}{*}{$\mathcal{O}'_3(59,13)$} &\multirow{2}{*}{$(2,\pm 1, 38)$}
    &  $21$ & $E_4:y^2=x^3+(45\alpha+28)x+23\alpha+90$  & $\langle (8,91 \alpha+97) \rangle$   \\
     &  & $21$ & $E_5:y^2=x^3+(56\alpha+28)x+78\alpha+90$  & $\langle (8,10 \alpha+97) \rangle$ \\
    \bottomrule
\end{tabular}
 \end{table}
Notice that $E_3=E_2^p$ and $G_3=G_2^p$ with $G_3=\langle (49,51 \alpha+41)\rangle$ and $G_2=\langle (49,50 \alpha+41) \rangle$, but $(E_3,G_3)$ is not isomorphic to $(E_2,G_2)$.

There exist $5$ reduced forms in the genus class $\Lambda' (q)$, and only one form has order $\le 2$. We give these forms and the corresponding supersingular elliptic curves in the following table.
\begin{table}[H]
   \centering
    \caption{Reduced forms in $\Lambda(q)$ and the corresponding supersingular elliptic curves}
    \footnotesize
   \begin{tabular}{lllll}
	\toprule
	Eichler order  &quadratic form  &$j$-invariant &elliptic curve   &cyclic group \\
	\midrule
	$\mathcal{O}_3(1619,1215)$ & $(3,0,404)$ & $0$ & $E_6:y^2=x^3+82$ & $\langle (0,48) \rangle$ \\
\midrule
	\multirow{2}{*}{$\mathcal{O}_3(11,7)$} &\multirow{2}{*}{$(11,\pm 6,111)$}
    &  $57$ & $E_7:y^2=x^3+(48\alpha+29)x+55\alpha+43$ & $\langle (86,13 \alpha+56) \rangle$  \\
    &  & $57$ & $E_8:y^2=x^3+(53\alpha+29)x+46\alpha+43$ & $\langle (86,88 \alpha+56) \rangle$ \\
    \midrule
	\multirow{2}{*}{$\mathcal{O}_3(59,13)$} &\multirow{2}{*}{$(32,12,39)$}
    &  $91\alpha+37$ & $E_9:y^2=x^3+(76\alpha+48)x+93\alpha+50$ & $\langle (70,48 \alpha+89) \rangle$ \\
     &  & $10\alpha+37$ & $E_{10}:y^2=x^3+(25\alpha+48)x+8\alpha+50$ & $\langle (70,53 \alpha+89) \rangle$\\
    \bottomrule
\end{tabular}
 \end{table}
\small
Factoring Hilbert class polynomials $H_{-cp}(X)$ and $H_{-4cp}(X)$ in $\mathbb{F}_p$, we also get these $j$-invariants.
$$
\begin{array}{lcl}
  H_{-303}(X) & = & (X+35)^2(X+80)^4(X^2+27X+54)^2 \pmod {101}, \\
  H_{-1212}(X) & = & X^2(X+44)^4(X^2+27X+54)^2 \pmod {101}.
\end{array}$$
Moreover, we have $h(-303)=h(-4\times303)=10$ which is double number of curves in Table 3 and 4. Note that we do not give the twist curves.
%
\end{example}

\section{Oriented isogenies and binary quadratic forms}
We have established the correspondence between Eichler orders and oriented supersingular elliptic curves. In this section, we provide explicit generators for ideals corresponding to oriented isogenies and study the actions of oriented isogenies between oriented supersingular elliptic curves.

Denote the imaginary quadratic field $K=\mathbb{Q}(\sqrt{-cp})$. Let $O$ be an order of $K$ and $O_K$ be its ring of integers. All orientations are primitive in this section.

Let $E_1$, $E_2$ be $O$-oriented supersingular elliptic curves. Assume that $\varphi : E_1 \to E_2$ is an $O$-oriented isogeny. Without loss of generality, we can restrict to the case when $\text{deg}(\varphi)=\ell \neq c$ is a prime, since every isogeny can be written as compositions of isogenies of prime degree. Moreover, the prime $\ell$ splits in $O_K$.

\textbf{Case 1:} $ E_1$ and $ E_2$ are $\mathbb{Z}[\frac{1+\sqrt{-cp}}{2}]$-oriented.

In this case, we have $cp \equiv 3 \pmod 4$. We write $\mathcal{O}_c'(q,r)$ with $r^2 + cp \equiv 0 \pmod {4q}$. Note that $\mathcal{O}_c'(q,r)$ and $ \mathcal{O}_c'(q,-r)$ can represent different binary quadratic forms, even though they are isomorphic as quaternion orders. If $\text{End} (E_1,G_1) \cong  \mathcal{O}_c'(q_1,r_1)$, we can assume that $(E_1,G_1)$ corresponds to the form $(q_1,r_1, \frac{r_1^{2}+cp}{4q_1})$. Since $\ell$ splits in $O_K$ and $cp \equiv 3 \pmod 4$, there exists an integer $-\ell<b\le \ell$ satisfying $b^2 \equiv -cp \pmod {4\ell}$. In this case, $\ell$ has prime decomposition $\ell O_K=\mathfrak{l}\bar{\mathfrak{l}}$, where $\mathfrak{l}=[\ell, (-b+\sqrt{-cp})/2]$ (see \cite[Theorem 6.15]{MR1012948}).

\begin{proposition}
Assume that $cp \equiv 3 \pmod 4$. Put $\mathfrak{q}_1=\mathbb{Z} q_1+ \mathbb{Z}(r_1+\sqrt{-cp})/2$. Let $\mathfrak{l}$ be an integral ideal of $O_K$. We have an isomorphism: $\mathfrak{l}^{-1} \mathcal{O}'(q_1,r_1) \mathfrak{l} \cong \mathcal{O}'(q_2,r_2)$ for some $q_2$ satisfying $(\ref{e1})$ which
is the norm of a prime ideal $\mathfrak{q}_2 \in \bar{\mathfrak{q}}_1 \mathfrak{l}^{2} P(O_K)$.
\end{proposition}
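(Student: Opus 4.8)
The plan is to pass from the given $\mathbb{Z}$-basis of $\mathcal{O}'(q_1,r_1)$ to a description adapted to the subfield $K=\mathbb{Q}(\sqrt{-cp})$, to conjugate by $\mathfrak{l}$ inside that description, and to recognize the outcome using the parametrization of the orders $\mathcal{O}'_c(\cdot,\cdot)$ from Corollary~\ref{c1} (equivalently Proposition~\ref{p2}(1)). Fix $\alpha\in B_{p,\infty}$ with $\alpha^2=-cp$, identify $K=\mathbb{Q}(\alpha)$ and $O_K=\mathbb{Z}\big[\tfrac{1+\alpha}{2}\big]$ (legitimate since $cp\equiv 3\pmod 4$), and write $B_{p,\infty}=K\oplus K\beta$ with $\beta$ as in Section~3, so $\beta^2=-q_1$ and $\beta y=\bar y\beta$ for $y\in K$. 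The first thing I would record is the identity $\mathcal{O}'(q_1,r_1)=O_K\oplus\tfrac{1}{q_1}\mathfrak{q}_1\beta$, a routine check from the basis using that $r_1$ is odd and $q_1\mid\tfrac{r_1^2+cp}{4}$; here $\tfrac{1}{q_1}\mathfrak{q}_1$ is a fractional $O_K$-ideal of norm $1/q_1$.

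Next I would verify that $\mathcal{X}:=\mathfrak{l}^{-1}\mathcal{O}'(q_1,r_1)\mathfrak{l}$ is again an Eichler order of level $c$ into which $O_K$ is optimally embedded. Because $\ell=N(\mathfrak{l})$ splits in $K$ we have $\ell\notin\{p,c\}$, so $\mathcal{O}'(q_1,r_1)$ is maximal at every place over $\ell$; localizing, $\mathcal{X}$ coincides with $\mathcal{O}'(q_1,r_1)$ away from $\ell$ and is a conjugate of a maximal order over $\ell$, so $\text{disc}(\mathcal{X})=c^2p^2$. Optimality is clear: $\mathfrak{l}^{\pm 1}\subset K$ centralizes $O_K$, so $\mathfrak{l}^{-1}O_K\mathfrak{l}=O_K\subseteq\mathcal{X}$, and any element of $\mathcal{X}\cap K$ is conjugate to itself hence already lies in $\mathcal{O}'(q_1,r_1)\cap K=O_K$.

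The key step is the conjugation itself. Since $\lambda\in K^\times$ sends $x+y\beta$ to $x+\tfrac{\bar\lambda}{\lambda}y\beta$, passing to the ideal $\mathfrak{l}$ twists the $K\beta$-part by $\bar{\mathfrak{l}}\mathfrak{l}^{-1}$, which gives $\mathcal{X}=O_K\oplus\big(\bar{\mathfrak{l}}\mathfrak{l}^{-1}\cdot\tfrac{1}{q_1}\mathfrak{q}_1\big)\beta=O_K\oplus\tfrac{1}{\ell q_1}\bar{\mathfrak{l}}^2\mathfrak{q}_1\beta$; the $K\beta$-lattice lies in the ideal class $[\mathfrak{q}_1][\mathfrak{l}]^{-2}$, the exponent $2$ being exactly the semilinear twist $\bar{\mathfrak{l}}\mathfrak{l}^{-1}\sim\mathfrak{l}^{-2}$ forced by $\beta y=\bar y\beta$. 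By the classification of endomorphism rings of $\mathbb{Z}[\sqrt{-cp}]$-oriented supersingular elliptic curves with level-$c$ structure proved in Section~4, together with the surface/floor dichotomy, $\mathcal{X}$ — an Eichler order of level $c$ that optimally contains $O_K$, hence attached to a curve on the surface — is isomorphic to some $\mathcal{O}'_c(q_2,r_2)$ with $q_2$ satisfying~(\ref{e1}). Since the isomorphism class of such an order is recorded by its $K\beta$-lattice class up to inversion (this is what Corollary~\ref{c1}(1)/Proposition~\ref{p2}(1) say once translated through the identity above), one may take $\mathfrak{q}_2$ with $[\mathfrak{q}_2]=[\bar{\mathfrak{q}}_1][\mathfrak{l}]^2$, i.e.\ $\mathfrak{q}_2\in\bar{\mathfrak{q}}_1\mathfrak{l}^2 P(O_K)$; and because $[\mathfrak{q}_1]$ lies in the genus $\Lambda'(q_1)$ while $[\mathfrak{l}]^2$ lies in the principal genus, this class also lies in $\Lambda'(q_1)$, so by Chebotarev it is the class of a prime $q_2\equiv 3\pmod 8$, and any such $q_2$ then satisfies~(\ref{e1}).

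I expect the main obstacle to be the clause in the previous paragraph asserting that the $K\beta$-lattice class modulo inversion is a complete isomorphism invariant for orders of the shape $O_K\oplus M\beta$: Corollary~\ref{c1}/Proposition~\ref{p2} is stated for the orders $\mathcal{O}'_c(q,r)$, whose $K\beta$-lattice $\tfrac1q\mathfrak{q}$ is attached to a single split prime, whereas $\bar{\mathfrak{l}}^2\mathfrak{q}_1$ is a product of several primes. I would close this by observing that every Eichler order of level $c$ that optimally contains $O_K$ has the form $O_K\oplus M\beta$ with $q_1N(M)=1$, and re-running the local argument behind Proposition~\ref{p2} (modeled on \cite[Proposition~2.1]{MR683249}) at this level of generality. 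With that in hand the remaining points — the Eichler-order claim, the Chebotarev step, and especially the appearance of the square on $[\mathfrak{l}]$ — are routine; indeed the square is the one genuinely new feature, and it is exactly the reason the $\mathbb{Z}[\sqrt{-cp}]$-oriented isogeny acts on the attached binary quadratic forms through $f\mapsto fg^2$.
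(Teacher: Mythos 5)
Your proposal is correct and follows essentially the same route as the paper, which gives no details of its own and simply defers to the proof of Proposition 4.2 in Ibukiyama \cite{MR683249}: decompose $\mathcal{O}'_c(q_1,r_1)$ as $O_K\oplus\frac{1}{q_1}\mathfrak{q}_1\beta$, observe that conjugation by $\mathfrak{l}$ twists the $K\beta$-part by $\bar{\mathfrak{l}}\mathfrak{l}^{-1}\sim\mathfrak{l}^{-2}$, and identify the result via the ideal-class classification of Proposition \ref{p2}(1). The one step you flag as needing care --- that the $K\beta$-lattice class up to inversion is a complete isomorphism invariant for orders of the form $O_K\oplus M\beta$ --- is exactly the content of the local argument in \cite{MR683249}, so your reconstruction matches the intended proof.
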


\begin{proof}
Similar to the proof of Proposition 4.2 in \cite{MR683249}.

\end{proof}

Similar to Theorem 4.5 in \cite{xiao2022endomorphism}, we have the following theorem.
\begin{theorem}
Assume that $cp \equiv 3 \pmod 4$. Let $E_1$ be an $O_K$-oriented supersingular elliptic curve with $\operatorname{End} (E_1,G_1) \cong  \mathcal{O}_c'(q_1,r_1)$. If $\varphi : E_1 \to E_2$ is an $O_K$-oriented isogeny of degree $\ell$, then there exist an integer $b$ and a prime $q_2$ satisfying $(\ref{e1})$ which can be represented by the form $(q_1,r_1, \frac{r_1^{2}+cp}{4q_1})(\ell, b, \frac{b^2+p}{4\ell})^2 $ such that $\operatorname{End}(E_2,\varphi(G_1)) \cong \mathcal{O}'(q_2,r_2)$.
\end{theorem}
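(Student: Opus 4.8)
The plan is to combine the previous proposition on conjugation of Eichler orders by ideals with the dictionary between ideal multiplication and composition of binary quadratic forms (Proposition~\ref{t3}). First I would set up the correspondence: since $\operatorname{End}(E_1,G_1)\cong\mathcal{O}'_c(q_1,r_1)$ and we have fixed the identification so that $(E_1,G_1)$ corresponds to the reduced form $(q_1,r_1,\tfrac{r_1^2+cp}{4q_1})$, Proposition~\ref{t3}(2) attaches to this form the proper $O_K$-ideal $\mathfrak{q}_1=[q_1,(-r_1+\sqrt{-cp})/2]$, and $q_1=N(\mathfrak{q}_1)$ by part~(3). The $O_K$-oriented isogeny $\varphi:E_1\to E_2$ of degree $\ell$ corresponds to one of the prime ideals $\mathfrak{l}=[\ell,(-b+\sqrt{-cp})/2]$ over $\ell$; under the $\ast$-action of ideals on $SS_{O_K}^{pr}(p)$ (Proposition~2.2 and the construction preceding it), $E_2=\mathfrak{l}\ast E_1$ and the induced level structure $\varphi(G_1)$ is the one compatible with this action.

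Next I would invoke the preceding proposition (the one with proof deferred to \cite{MR683249}): conjugating $\mathcal{O}'(q_1,r_1)$ by $\mathfrak{l}$ gives $\mathfrak{l}^{-1}\mathcal{O}'(q_1,r_1)\mathfrak{l}\cong\mathcal{O}'(q_2,r_2)$, where $q_2=N(\mathfrak{q}_2)$ for some prime ideal $\mathfrak{q}_2\in\bar{\mathfrak{q}}_1\mathfrak{l}^2\,P(O_K)$ with $q_2$ still satisfying~(\ref{e1}). On the level-structure side I would argue that the Eichler order attached to the pair $\mathfrak{l}\ast(E_1,G_1)$ is exactly the $\mathfrak{l}$-conjugate of $\operatorname{End}(E_1,G_1)$: this is the functoriality of $\mathcal{O}(\cdot,\cdot)$ with respect to the ideal action, which follows from the fact that $\varphi$ intertwines the two optimal embeddings of $O_K$ and $\varphi(G_1)$ is the image subgroup, so that $\alpha\mapsto\tfrac{1}{\ell}\varphi\alpha\hat\varphi$ carries $\operatorname{End}(E_1,G_1)$ isomorphically onto $\operatorname{End}(E_2,\varphi(G_1))$ and this isomorphism is realized by the ideal $\mathfrak{l}$. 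Hence $\operatorname{End}(E_2,\varphi(G_1))\cong\mathcal{O}'(q_2,r_2)$.

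Finally I would translate the ideal statement $\mathfrak{q}_2\in\bar{\mathfrak{q}}_1\mathfrak{l}^2\,P(O_K)$ back into the language of forms. By Proposition~\ref{t3}(2) the ideal class group $C(O_K)$ is isomorphic to the form class group $C(-cp)$, with multiplication of (classes of) ideals corresponding to Gauss composition of forms; the ideal $\bar{\mathfrak{q}}_1$ corresponds to the form $(q_1,-r_1,\tfrac{r_1^2+cp}{4q_1})$, which is the inverse of $(q_1,r_1,\cdots)$, but since $q_2$ is represented by a form iff it is represented by its inverse (a remark already used in Corollary~\ref{c1}), we may equally say $q_2$ is represented by the class of $\mathfrak{q}_1\mathfrak{l}^{-2}$ or of $\bar{\mathfrak{q}}_1\mathfrak{l}^2$, i.e. by $(q_1,r_1,\tfrac{r_1^2+cp}{4q_1})\cdot(\ell,b,\tfrac{b^2+cp}{4\ell})^2$. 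Part~(3) of Proposition~\ref{t3} then gives that $q_2$, being the norm of $\mathfrak{q}_2$ in this class, is represented by that composed form, which is the assertion. I expect the main obstacle to be the bookkeeping in the middle step — checking that the abstract ideal action on $SS_{O_K}^{pr}(p)$ really does send $\operatorname{End}(E_1,G_1)$ to its $\mathfrak{l}$-conjugate as \emph{Eichler} orders with their level structure (not merely the underlying maximal orders), and that the choice of $\mathfrak{l}$ versus $\bar{\mathfrak{l}}$ (equivalently $b$ versus $-b$) is matched consistently with the fixed identification of $(E_1,G_1)$ with a specific form rather than its inverse; everything else is the formal transport of the already-proven quaternionic proposition through the form/ideal dictionary.
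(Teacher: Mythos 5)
Your proposal is correct and follows exactly the route the paper intends: the paper gives no written proof of this theorem, only the pointer ``similar to Theorem 4 of \cite{xiao2022endomorphism}'', and the argument it has in mind is precisely your combination of the immediately preceding conjugation proposition ($\mathfrak{l}^{-1}\mathcal{O}'_c(q_1,r_1)\mathfrak{l}\cong\mathcal{O}'_c(q_2,r_2)$ with $\mathfrak{q}_2\in\bar{\mathfrak{q}}_1\mathfrak{l}^2P(O_K)$) with the ideal/form dictionary of Proposition~\ref{t3} and the compatibility of the ideal action with $\mathcal{O}(\cdot,\cdot)$. Your writing the second factor as $(\ell,b,\tfrac{b^2+cp}{4\ell})$ rather than the paper's $(\ell,b,\tfrac{b^2+p}{4\ell})$ silently corrects what is evidently a typo, since Gauss composition requires both factors to have discriminant $-cp$.
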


\textbf{Case 2:} $ E_1$ and $ E_2$ are $\mathbb{Z}[\sqrt{-cp}]$-oriented.

Similar relations also hold for Eichler order $\mathcal{O}_c(q,r)$. Let $E_1$ and $E_2$ be two $\mathbb{Z}[\sqrt{-cp}]$-oriented supersingular elliptic curves and $\varphi: E_1 \to E_2$ a $\mathbb{Z}[\sqrt{-cp}]$-oriented isogeny.

If $ \deg(\varphi)=\ell =2\neq c$, then the isogeny $\varphi$ corresponds to the ideal $[2, \sqrt{-cp}-1]$ in $\mathbb{Z}[\sqrt{-cp}]$ with $cp \equiv 1 \pmod 4$. Because the ideals $[2, \sqrt{-cp}-1]$ and $[\frac{cp+1}{2}, \sqrt{-cp}-1]$ are equivalent in the ideal class group $C(\mathbb{Z}[\sqrt{-cp}])$, there exists a $\mathbb{Z}[\sqrt{-cp}]$-oriented isogeny $\psi:E_1 \to E_2$ with kernel $\ker(\psi)=\{ P \in E_1[\frac{cp+1}{2}] : [\sqrt{-cp}-1]P = \infty \}$. Note that $\ker(\psi)=\{ P \in E_1[\frac{cp+1}{2}] : [2\sqrt{-cp}-2]P = \infty \}$ since $\frac{1+cp}{2}$ is odd. Moreover, the kernel ideal of $\psi$ can be represented by $[\frac{cp+1}{2}, 2\sqrt{-cp}-2]$ which corresponds to the form $(\frac{cp+1}{2},4, 8)\sim (8 ,-4, \frac{cp+1}{2})$.

If $\deg(\varphi)=\ell \neq c$ is an odd prime, then there exists an integer $-\ell<2b\le \ell$ satisfying $4b^2 \equiv -4cp \pmod {4\ell}$. We can assume that $\ker (\varphi)=\{ P \in E_1[\ell] : [\sqrt{-cp}-b]P=\infty \}$. Moreover, because $\{ P \in E_1[\ell] : [\sqrt{-cp}-b]P=\infty \}= \{ P \in E_1[\ell] : 2[\sqrt{-cp}-b]P=\infty \}$, the kernel ideal of $\varphi$ can be represented by $[\ell, 2(\sqrt{-cp}-b)]$ which corresponds to the form $(\ell, 4b, \frac{4b^2+4cp}{\ell})$.

The quadratic form $(8 ,-4, \frac{cp+1}{2})$ (or $(\ell, 4b, \frac{4b^2+4cp}{\ell})$) is primitive, so the corresponding ideal is proper.
Let $O=\mathbb{Z}[2\sqrt{-cp}]$ be an order in $K$ with discriminant $-16cp$. The prime $\ell \neq c$ has prime decomposition $\ell O=\mathfrak{l}\bar{\mathfrak{l}}$. We can write $\mathfrak{l}$ as following:
\begin{equation}\label{5}
\mathfrak{l}=\left \{
  \begin{array}{ll}
  (8 ,-4, \frac{cp+1}{2}) & \text{if} \ \ell=2 ; \\
  (\ell, 4b,\frac{4b^2+4cp}{\ell}) & \text{if} \ \ell \  \text{is odd .}
  \end{array} \right.
\end{equation}

\begin{proposition}
Let $O=\mathbb{Z}[2\sqrt{-cp}]$ be an order in $K$ with discriminant $-16cp$.
Let $\mathfrak{l}$ be an integral ideal of $O$ as in $(\ref{5})$. Put $\mathfrak{q}_1=\mathbb{Z} q_1+ \mathbb{Z}(2r_1+2\sqrt{-cp})$. We have an isomorphism: $\mathfrak{l}^{-1} \mathcal{O}_c(q_1,r_1) \mathfrak{l} \cong \mathcal{O}_c(q_2,r_2)$ for some $q_2$ satisfying $(\ref{e1})$ which is the norm of a prime ideal $\mathfrak{q}_2 \in \bar{\mathfrak{q}}_1 \mathfrak{l}^{2} P_{\mathbb{Z}}(2)$ (resp. $\mathfrak{q}_2 \in \bar{\mathfrak{q}}_1 \mathfrak{l}^{2} P_{\mathbb{Z}}(4)$) if $cp \equiv 1,2 \pmod 4$ (resp. $cp \equiv 3 \pmod 4$) respectively.
\end{proposition}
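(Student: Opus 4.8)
The plan is to run the argument behind Proposition~4.2 of \cite{MR683249}, in the shape already used for Proposition~\ref{p2}(2): by Corollary~\ref{c1}(2), the isomorphism class of $\mathcal{O}_c(q,r)$ is pinned down by the class of the norm-$q$ ideal $\mathfrak{q}=\mathbb{Z}q+\mathbb{Z}(2r+2\sqrt{-cp})$ in the ring class group of $O=\mathbb{Z}[2\sqrt{-cp}]$, up to complex conjugation --- equivalently, by the class of $\mathfrak{q}$ modulo $P_{\mathbb{Z}}(2)$ or $P_{\mathbb{Z}}(4)$, according as the conductor of $O$ in $O_K$ is $2$ or $4$, i.e.\ according as $cp\equiv1,2$ or $3\pmod4$. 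So it suffices to identify the ideal class attached to $\mathfrak{l}^{-1}\mathcal{O}_c(q_1,r_1)\mathfrak{l}$.

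First I would fix the embedding $\iota\colon K\hookrightarrow B_{p,\infty}$, $\sqrt{-cp}\mapsto\alpha$, and use the decomposition $B_{p,\infty}=K\oplus K\beta$ together with the relation $\beta\lambda=\bar\lambda\,\beta$ for $\lambda\in K$, which is immediate from $\alpha\beta=-\beta\alpha$. Reading the $K$- and $K\beta$-components off the four generators of $\mathcal{O}_c(q_1,r_1)$, a direct computation gives $\mathcal{O}_c(q_1,r_1)\cap K=\mathbb{Z}[\sqrt{-cp}]$ --- so $\mathbb{Z}[\sqrt{-cp}]$ is optimally embedded --- and shows that $\mathcal{O}_c(q_1,r_1)$ is assembled from $\mathbb{Z}[\sqrt{-cp}]$ and a fractional $\mathbb{Z}[\sqrt{-cp}]$-ideal $\mathfrak{b}_1$ glued by the half-integral generators; the existence of these generators is governed by the congruences $q\equiv3,7\pmod8$ in $(\ref{e1})$, and the package ``$(\mathbb{Z}[\sqrt{-cp}],\mathfrak{b}_1,\text{gluing})$'' is equivalent to the ring class of $\mathfrak{q}_1$ in $C(O)$.

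Next I would conjugate. Taking $\mathfrak{l}$ in its ideal class prime to $2$ (possible by $(\ref{5})$ and the remarks preceding it), it is an invertible ideal sitting inside the commutative field $K$, so $\mathfrak{l}^{-1}(\cdot)\mathfrak{l}$ fixes $\mathbb{Z}[\sqrt{-cp}]$ and sends the off-diagonal part $\mathfrak{b}_1\beta$ to $\mathfrak{b}_1\,\mathfrak{l}^{-1}\bar{\mathfrak{l}}\,\beta$; since $\mathfrak{l}\bar{\mathfrak{l}}=\ell O$, we get $\mathfrak{l}^{-1}\bar{\mathfrak{l}}=\ell\,\mathfrak{l}^{-2}$, so the off-diagonal ideal class is multiplied by $[\mathfrak{l}]^{-2}$. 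Thus $\mathfrak{l}^{-1}\mathcal{O}_c(q_1,r_1)\mathfrak{l}$ is again an Eichler order of level $c$ with $\mathbb{Z}[\sqrt{-cp}]$ optimally embedded; since an optimal embedding of $\mathbb{Z}[\sqrt{-cp}]$ rules out the $\mathcal{O}'_c$-type (in which, when $cp\equiv3\pmod4$, the larger order $\mathbb{Z}[\tfrac{1+\sqrt{-cp}}{2}]$ is optimally embedded), the structure theory of Section~3 identifies it with some $\mathcal{O}_c(q_2,r_2)$. Its attached class is $[\mathfrak{q}_1][\mathfrak{l}]^{-2}$; because $\mathcal{O}_c(q_2,r_2)$ does not distinguish $\mathfrak{q}_2$ from $\bar{\mathfrak{q}}_2$, I may choose the representative with class $[\bar{\mathfrak{q}}_1\mathfrak{l}^{2}]$, i.e.\ require $\mathfrak{q}_2\in\bar{\mathfrak{q}}_1\mathfrak{l}^{2}P_{\mathbb{Z}}(2)$ (resp.\ $P_{\mathbb{Z}}(4)$). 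Finally, that such a prime $q_2$ satisfying $(\ref{e1})$ exists in this precise coset is the Chebotarev/Dirichlet argument of \cite{MR683249}: the conditions in $(\ref{e1})$ describe a union of Frobenius classes in a suitable ray class field of $K$ that is independent of the ring class group of $O$, so the coset contains infinitely many admissible primes.

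The hard part is the $2$-adic bookkeeping of the last step: one has to track the half-integral generators of $\mathcal{O}_c(q,r)$ through the conjugation and check that the resulting class is correct not merely modulo $P(2)$ but modulo $P_{\mathbb{Z}}(2)$ when $cp\equiv1,2\pmod4$, and modulo the finer $P_{\mathbb{Z}}(4)$ when $cp\equiv3\pmod4$ --- the jump from $2$ to $4$ reflecting that the conductor of $O=\mathbb{Z}[2\sqrt{-cp}]$ in $O_K$ is $2$ in the first case and $4$ in the second. Choosing $\mathfrak{l}$ of odd norm as in $(\ref{5})$ (so $\mathfrak{l}\in I(2)$ and $\mathfrak{l}^{2}$ has square norm $\ell^{2}$) is exactly what keeps this computation clean, after which the refinement drops out of the same local analysis that proves Proposition~\ref{p2}(2).
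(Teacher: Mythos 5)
Your proposal is correct and follows essentially the same route as the paper, whose entire proof is the remark that $\mathfrak{l}$ is a proper $O$-ideal together with a citation of Proposition 4.4 of \cite{xiao2022endomorphism} (itself Ibukiyama's method: split $B_{p,\infty}=K\oplus K\beta$, observe that conjugation by $\mathfrak{l}\subset K$ fixes the optimally embedded $\mathbb{Z}[\sqrt{-cp}]$ and twists the $K\beta$-component by $\mathfrak{l}^{-1}\bar{\mathfrak{l}}$, then identify the class via Proposition \ref{p2}). Your sketch supplies more detail than the paper does, and the one step you defer --- the $2$-adic bookkeeping distinguishing $P_{\mathbb{Z}}(2)$ from $P_{\mathbb{Z}}(4)$ via the conductor of $O$ in $O_K$ --- is deferred by the paper to exactly the same prior local analysis.
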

\begin{proof}
Note that $\mathfrak{l}$ is a proper ideal in $O$. The proof is similar to that of Proposition 4.7 in \cite{xiao2022endomorphism}.

\end{proof}

Similar to Theorem 4.9 in \cite{xiao2022endomorphism}, we have the following theorem.
\begin{theorem}
Let $E_1$ be a supersingular elliptic curve with $\operatorname{End} (E_1,G_1) \cong  \mathcal{O}_c(q_1,r_1)$. If $\varphi : E_1 \to E_2$ is a $\mathbb{Z}[\sqrt{-cp}]$-oriented isogeny of degree $\ell \neq c$, then there is a prime $q_2$ satisfying $(\ref{e1})$ which can be represented by the form $(q_1,4r_1, \frac{4r_1^{2}+4cp}{q_1})(\ell, 2b, \frac{b^2+4p}{\ell})^2 $ such that $\operatorname{End}(E_2,\varphi(G_1)) \cong \mathcal{O}_c(q_2,r_2)$.
\end{theorem}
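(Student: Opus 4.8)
The plan is to reduce this theorem to the analogous statement for maximal orders proved in \cite{xiao2022endomorphism} (Theorem 5) by tracking how the Eichler order $\mathcal{O}_c(q_1,r_1)$, together with its embedded suborder $\mathbb{Z}[\sqrt{-cp}]$, transforms under the $\mathbb{Z}[\sqrt{-cp}]$-oriented isogeny $\varphi$. First I would recall from Section~2.5 (the correspondence between isogenies and ideals) that the $\mathbb{Z}[\sqrt{-cp}]$-oriented isogeny $\varphi:E_1\to E_2$ of degree $\ell\neq c$ is described by a kernel ideal, and by the explicit computation preceding the statement, this kernel ideal is a proper $O$-ideal $\mathfrak{l}$ in $O=\mathbb{Z}[2\sqrt{-cp}]$ of the shape given in $(\ref{5})$, namely $\mathfrak{l}=(\ell,4b,\frac{4b^2+4cp}{\ell})$ (or the $\ell=2$ case). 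Under the dictionary of Proposition~\ref{t3}, $\mathfrak{l}$ corresponds to the reduced form $(\ell,2b,\frac{b^2+4p}{\ell})$ appearing in the statement (after the standard rescaling of a discriminant-$-16cp$ form by a factor of $2$ in the cross term).

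Next I would invoke the immediately preceding Proposition, which says precisely that conjugation by $\mathfrak{l}$ sends $\mathcal{O}_c(q_1,r_1)$ to an isomorphic Eichler order $\mathcal{O}_c(q_2,r_2)$, where $q_2$ is the norm of a prime ideal $\mathfrak{q}_2$ lying in the class $\bar{\mathfrak{q}}_1\,\mathfrak{l}^2\,P_{\mathbb{Z}}(2)$ (or $P_{\mathbb{Z}}(4)$ when $cp\equiv 3\pmod 4$). The point is then that on the level of form classes this says: $q_2$ is represented by the product form $(q_1,4r_1,\frac{4r_1^2+4cp}{q_1})\cdot(\ell,2b,\frac{b^2+4p}{\ell})^2$, because (i) the ideal class of $\mathfrak{q}_1$ corresponds to the form $(q_1,4r_1,\frac{4r_1^2+4cp}{q_1})$ via Proposition~\ref{t3}, (ii) multiplication of ideal classes corresponds to Gauss composition of forms, and (iii) passing from $\mathfrak{q}_1$ to $\bar{\mathfrak{q}}_1$ replaces the form by its inverse, which is represented by the same integers (a form and its inverse represent the same primes), so the ambiguity of $\mathfrak{q}_1$ vs.\ $\bar{\mathfrak{q}}_1$ does not affect which integers $q_2$ can be. The subgroup $P_{\mathbb{Z}}(2)$ (resp.\ $P_{\mathbb{Z}}(4)$) is exactly what makes Corollary~\ref{c1}(2) identify the isomorphism class of $\mathcal{O}_c(q,r)$ with the reduced-form class of $q$ with discriminant $-16cp$, so conjugation by a principal ideal in $P_{\mathbb{Z}}(2)$ does not change the isomorphism type; hence $\mathcal{O}_c(q_2,r_2)$ is well-defined up to isomorphism by the form-class of $q_2$.

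Finally I would connect the source side: by hypothesis $\operatorname{End}(E_1,G_1)\cong\mathcal{O}_c(q_1,r_1)$, and since $\varphi$ is $\mathbb{Z}[\sqrt{-cp}]$-oriented, its action on endomorphism rings is exactly conjugation by the corresponding ideal $\mathfrak{l}$ — this is the content of the ideal--isogeny correspondence combined with the definition of an oriented isogeny in Section~2.4, and it identifies $\operatorname{End}(E_2,\varphi(G_1))$ with $\mathfrak{l}^{-1}\mathcal{O}_c(q_1,r_1)\mathfrak{l}$. Combining with the Proposition gives $\operatorname{End}(E_2,\varphi(G_1))\cong\mathcal{O}_c(q_2,r_2)$ with $q_2$ represented by the claimed product form, finishing the proof. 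The main obstacle I anticipate is the bookkeeping in the last paragraph: carefully justifying that the Deuring correspondence between the oriented isogeny $\varphi$ and the $O$-ideal $\mathfrak{l}$ really does intertwine "push forward the level structure $G_1$ along $\varphi$" with "conjugate the Eichler order by $\mathfrak{l}$", rather than merely with conjugating the ambient maximal order; this requires checking that $\varphi(G_1)$ is the correct subgroup, i.e.\ that the level-$c$ structure is transported compatibly, which is where the condition $\ell\neq c$ and the coprimality of $\mathfrak{l}$ to $c$ are used. Everything else is an appeal to the cited analogues in \cite{xiao2022endomorphism} together with the Propositions already established in this section.
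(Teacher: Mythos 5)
Your proposal follows essentially the route the paper intends: the paper gives no written proof of this theorem beyond the remark that it is ``similar to Theorem 5 in \cite{xiao2022endomorphism}'', and that argument is exactly the combination you describe --- the kernel ideal $\mathfrak{l}$ of $(\ref{5})$ attached to $\varphi$, the immediately preceding proposition placing $\mathfrak{q}_2$ in $\bar{\mathfrak{q}}_1\,\mathfrak{l}^{2}\,P_{\mathbb{Z}}(2)$ (resp.\ $P_{\mathbb{Z}}(4)$), and the ideal--form dictionary of Proposition \ref{t3} translating that class relation into representability of $q_2$ by the composed form. One caveat on your first paragraph: the form $(\ell,2b,\frac{b^2+4p}{\ell})$ printed in the theorem has discriminant $-16p$ rather than $-16cp$, so it is not obtained from the kernel form $(\ell,4b,\frac{4b^2+4cp}{\ell})$ of $(\ref{5})$ by any ``standard rescaling'' --- this is a slip carried over from the $c=1$ case, and the composition should be performed with the discriminant-$-16cp$ form throughout.
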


\textbf{Case 3:} $E_1$ is $\mathbb{Z}[\frac{1+\sqrt{-cp}}{2}]$-oriented and $E_2$ is $\mathbb{Z}[\sqrt{-cp}]$-oriented.

In this case, we have $cp \equiv 3 \pmod 4$ and the oriented isogeny $\varphi : E_1 \to E_2$ is descending with $\deg(\varphi)=2$.

Let $q$ be a prime satisfying (\ref{e1}). As we know, $r^2+cp \equiv 0 \pmod {4q}$ implies $r^2+cp \equiv 0 \pmod {q}$. On the contrary, if the number $r$ satisfies the equation $x^2+cp \equiv 0 \pmod {q}$, then $r$ or $r+q$ satisfies the equation $x^2+cp \equiv 0 \pmod {4q}$. It follows that $\mathcal{O}_c(q,r)$ and $\mathcal{O}'_c(q,r)$ are Eichler orders with level $c$ for the same $r$ and $q$. The following theorem tells us the relations between $\mathcal{O}(q,r)$ and $\mathcal{O}'(q,r)$. Similar to the supersingular $\mathbb{F}_p$-isogeny graphs introduced in \cite{MR3451433}, the vertical $2$-isogeny does not change the choice of $q$.

\begin{theorem}
Let $cp \equiv 3 \pmod 4$ and $E_2$ be a $\mathbb{Z}[\sqrt{-cp}]$-oriented supersingular elliptic curve with $j(E_2) \neq 1728$. If $\operatorname{End}(E_2,G_2) \cong \mathcal{O}_c(q,r)$, then there exists an oriented isogeny $\varphi : E_2 \to E_1$ such that $\operatorname{End}(E_1,\varphi(G_2)) \cong \mathcal{O}'_c(q,r)$.
\end{theorem}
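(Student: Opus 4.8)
The plan is to realise $\varphi$ as the unique ascending oriented $2$-isogeny out of the floor, and then to transport the Eichler order along $\varphi$ through the ideal--isogeny dictionary, reading off $\mathcal O'_c(q,r)$ at the end.

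\emph{Step 1: $E_2$ lies on the floor, and $\varphi$ exists.} Since $cp\equiv 3\pmod 4$, the order $\mathbb Z[\sqrt{-cp}]$ has conductor $2$ in $O_K=\mathbb Z[\frac{1+\sqrt{-cp}}2]$, so a $\mathbb Z[\sqrt{-cp}]$-oriented supersingular curve lies either on the surface (primitively $O_K$-oriented) or on the floor (primitively $\mathbb Z[\sqrt{-cp}]$-oriented). One checks that $\frac{1+\alpha}{2}\notin\mathcal O_c(q,r)$ — the relation defining the fourth basis vector carries the odd denominator $q$ — so $O_K$ does not embed in $\mathcal O_c(q,r)$, and hence a surface curve cannot have its level-$c$ endomorphism ring isomorphic to $\mathcal O_c(q,r)$ \emph{unless} $\mathcal O_c(q,r)\cong\mathcal O'_c(q_0,r'_0)$ for some $q_0$ satisfying $(\ref{e1})$, which by the Remark following Proposition~\ref{p2} happens precisely when $e(q)=e'(q_0)=2$, i.e.\ when $j(E_2)=1728$. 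As $j(E_2)\ne 1728$, the hypothesis $\operatorname{End}(E_2,G_2)\cong\mathcal O_c(q,r)$ forces $E_2$ onto the floor. A floor curve admits a unique ascending oriented $2$-isogeny $\varphi\colon E_2\to E_1$, with $E_1$ on the surface and $\varphi_*\iota$ a primitive $O_K$-orientation (theory of oriented isogeny volcanoes; cf.\ Proposition~4 of \cite{chenu2021higher} and \cite{MR4170779,arpin2022orientations}). Because $cp\equiv 3\pmod 4$ makes $c$ odd, $\gcd(\deg\varphi,c)=1$, so $\varphi$ is injective on $G_2$, $\varphi(G_2)\subseteq E_1[c]$ has order $c$, and $(E_1,\varphi(G_2))\in|\mathcal S_c|$; by Proposition~\ref{p21}, $\operatorname{End}(E_1,\varphi(G_2))$ is an Eichler order of level $c$, so by Section~4 it is isomorphic to some $\mathcal O'_c(q_2,r'_2)$. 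Since the isomorphism type of $\mathcal O'_c$ depends only on $q$, it now suffices to prove $q_2=q$.

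\emph{Step 2: transport via the connecting ideal.} Fix an isomorphism $\operatorname{End}(E_2)\otimes\mathbb Q\xrightarrow{\ \sim\ }B_{p,\infty}=\mathbb Q+\mathbb Q\alpha+\mathbb Q\beta+\mathbb Q\alpha\beta$ with $\alpha^2=-cp$, $\beta^2=-q$, $\alpha\beta=-\beta\alpha$ (the basis built in Section~3) carrying $\operatorname{End}(E_2,G_2)$ onto $\mathcal O_c(q,r)$ and $\mu=\iota(\sqrt{-cp})$ onto $\alpha$; take $r$ with $r^2+cp\equiv 0\pmod{4q}$ (possible by the discussion preceding the theorem), so that $\mathcal O'_c(q,r)$ is defined. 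Under Deuring's correspondence refined to level structures (Proposition~\ref{p21}), $\varphi$ corresponds to a left $\mathcal O_c(q,r)$-ideal $I$ with $\operatorname{Nrd}(I)=\deg\varphi=2$, $\mathcal O_L(I)=\mathcal O_c(q,r)$, and $\mathcal O_R(I)\cong\operatorname{End}(E_1,\varphi(G_2))$; since $\varphi$ is the \emph{ascending} isogeny, $I$ is singled out among the three norm-$2$ left ideals of $\mathcal O_c(q,r)$ by the requirement $\frac{1+\alpha}{2}\in\mathcal O_R(I)$ (equivalently, that $O_K$ embed optimally in $\mathcal O_R(I)$), the other two quotients of $E_2$ by $2$-torsion subgroups being still on the floor. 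I would then compute $\mathcal O_R(I)=\{x\in B_{p,\infty}:Ix\subseteq I\}$ by a direct lattice computation in the given $\mathbb Z$-basis of $\mathcal O_c(q,r)$ — of the same kind as the proof of Proposition~4.4 of \cite{xiao2022endomorphism} — and verify that $\mathcal O_R(I)$ has $\mathbb Z$-basis $1,\ \frac{1+\alpha}{2},\ \beta,\ \frac{(r+\alpha)\beta}{2q}$, i.e.\ $\mathcal O_R(I)=\mathcal O'_c(q,r)$. This gives $q_2=q$ and completes the proof, and it makes precise the earlier remark that a vertical $2$-isogeny does not change the choice of $q$.

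\emph{The main obstacle} is the computation in Step~2. First, one must correctly match the ascending isogeny with a concrete norm-$2$ left ideal $I$ of $\mathcal O_c(q,r)$: geometrically $\ker\varphi$ is the order-$2$ subgroup of $E_2[2]$ selected by the volcano, and turning this into a lattice requires working $2$-adically — $\mathcal O_c(q,r)\otimes\mathbb Z_2\cong M_2(\mathbb Z_2)$, as $2\nmid cp$ and $c$ is odd — while tracking the position of $G_2$ relative to $\alpha$; I expect $I=\mathcal O_c(q,r)\cdot 2+\mathcal O_c(q,r)\cdot\delta$ for a suitable $\delta\in\mathbb Z+\mathbb Z\alpha$ of reduced norm twice an odd number (in particular $\delta\ne 1+\alpha$, whose reduced norm $1+cp$ is divisible by $4$). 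Second, and this is the numerical heart of the statement, the computation must display the split prime $q$ unchanged; this comes down to $I$ being a unit locally at $q$, so that $\mathcal O_c(q,r)$ and $\mathcal O_R(I)$ agree after tensoring with $\mathbb Z_q$. The hypothesis $j(E_2)\ne 1728$ is used only to rule out the coincidence $\mathcal O_c(q,r)\cong\mathcal O'_c(q_0,r'_0)$ (the $e(q)=2$ case of the Remark after Proposition~\ref{p2}), which is exactly what lets the Eichler order detect "floor versus surface" and so makes the conclusion — and indeed the statement — meaningful.
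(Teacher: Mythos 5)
Your overall strategy --- realise $\varphi$ as the ascending oriented $2$-isogeny and then transport the Eichler order along the corresponding norm-$2$ connecting ideal $I$ --- is a genuinely different route from the paper's and is viable in principle, but as written it stops short of the one step that carries the content of the theorem. Everything in your Step~1 only yields $\operatorname{End}(E_1,\varphi(G_2))\cong\mathcal{O}'_c(q_2,r'_2)$ for \emph{some} $q_2$ satisfying $(\ref{e1})$; the whole point is that $q_2=q$, and that is exactly the part you defer (``I would then compute $\mathcal{O}_R(I)$ \dots{} and verify''). You also never pin down $I$: you ``expect'' it to be $\mathcal{O}_c(q,r)\cdot 2+\mathcal{O}_c(q,r)\cdot\delta$ for a suitable $\delta$, but without exhibiting $\delta$ and carrying out the lattice computation the argument is a plan rather than a proof. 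Two smaller defects in Step~1: from $\frac{1+\alpha}{2}\notin\mathcal{O}_c(q,r)$ you cannot conclude that $O_K$ does not embed in $\mathcal{O}_c(q,r)$ (that rules out one embedding of $K$, not all conjugate ones), and primitivity of the orientation is a statement about the maximal order $\operatorname{End}(E_2)$, not about the Eichler order $\operatorname{End}(E_2,G_2)$; moreover the two non-ascending $2$-quotients of $E_2$ descend to $\mathbb{Z}[2\sqrt{-cp}]$-oriented curves rather than remaining ``on the floor''.

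The paper bypasses both the volcano discussion and the ideal computation by a direct construction: it introduces the order $\tilde{\mathcal{O}}_c(q,r)=\mathbb{Z}+\mathbb{Z}\beta+\mathbb{Z}\frac{1+\alpha+\beta+\alpha\beta}{2}+\mathbb{Z}\frac{(r+\alpha)\beta}{q}$, checks that it is contained in $\mathcal{O}_c(q,r)\cap\mathcal{O}'_c(q,r)$ and has discriminant $4c^2p^2$, and then uses $j(E_2)\neq 1728$ (via Remark 3.3, which says $\mathcal{O}_c(q,r)\cong\mathcal{O}'_c(q,r)$ can only happen in the $j=1728$ case) to force $[\mathcal{O}_c(q,r):\tilde{\mathcal{O}}_c(q,r)]=[\mathcal{O}'_c(q,r):\tilde{\mathcal{O}}_c(q,r)]=2$ and $\tilde{\mathcal{O}}_c(q,r)=\mathcal{O}_c(q,r)\cap\mathcal{O}'_c(q,r)$. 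This exhibits the two Eichler orders, with the \emph{same} $q$ and $r$, as $2$-neighbours, and the corresponding pairs are therefore joined by an oriented $2$-isogeny. If you want to rescue your route, the cleanest fix is the same observation phrased in your language: verify directly that $\mathcal{O}_c(q,r)\cap\mathcal{O}'_c(q,r)$ has index $2$ in each side; this hands you the connecting ideal with right order literally equal to $\mathcal{O}'_c(q,r)$, with no further $2$-adic computation and no need to single out the ascending ideal among the three norm-$2$ candidates.
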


\begin{proof}
For $cp \equiv 3 \pmod 4$, if $E_2$ is a $\mathbb{Z}[\sqrt{-cp}]$-oriented supersingular elliptic curve, then there exists an oriented $2$-isogeny $\varphi : E_2 \to E_1$ where $E_1$ is $\mathbb{Z}[\frac{1+\sqrt{-cp}}{2}]$-oriented. By Theorem \ref{zt4}, we have $\operatorname{End}(E_1,\varphi(G_2)) \cong \mathcal{O}'_c(q',r')$ for some prime $q'$ satisfying (\ref{e1}). Moreover, the imaginary quadratic order $\mathbb{Z}[\frac{1+\sqrt{-q}}{2}]$ can be embedded into $\operatorname{End}(E_2,G_2) \cong \mathcal{O}_c(q,r)$, so the imaginary quadratic order $\mathbb{Z}[\sqrt{-q}]$ can be embedded into $\operatorname{End}(E_1,\varphi(G_2))$ (see \cite[Theorem 1]{MR4258517}). It follows that $\operatorname{End}(E_1,\varphi(G_2)) \cong \mathcal{O}'_c(q,r)$.
\end{proof}

\begin{remark}
If $j(E_2) = 1728$, we can assume $E_2:y^2=x^3+4x$. There exists a $2$-isogeny $\varphi:E_1 \to E_2$ with $E_1:y^2=x^3-x$ and $\varphi(x,y)=(\frac{x^2-1}{x}, \frac{x^2y+y}{x^2})$. Moreover, we have $j(E_1)=1728$ and $\operatorname{End}(E_1) \cong \operatorname{End}(E_2)$.
\end{remark}

Assume that $E_1$ is $\mathbb{Z}[\frac{1+\sqrt{-cp}}{2}]$-oriented with $\text{End}(E_1,G_1) \cong \mathcal{O}_c'(q,r)$ and $E_2$ is $\mathbb{Z}[\sqrt{-cp}]$-oriented with $\text{End}(E_2,G_2) \cong \mathcal{O}_c(q,r)$.

If $p \equiv 7 \pmod 8$, there is only one down $\mathbb{Z}[\sqrt{-cp}]$-oriented $2$-isogeny $\varphi: E_1 \to E_2$. If $\mathcal{O}_c'(q,r)$ can represent the form $(q,r,\frac{r^2+p}{4q})$ with $q$ satisfying $(\ref{e1})$, then $\mathcal{O}_c(q,r)$ can represent $(q,4r,\frac{4r^2+4p}{q})$.

If $p \equiv 3 \pmod 8$, there are three down $\mathbb{Z}[\sqrt{-cp}]$-oriented $2$-isogenies from $E_1$. Denote them by $\varphi_i: E_1 \to E_{2,i}$ with $i \in \{1,2,3\}$. If $\mathcal{O}_c'(q,r)$ can represent the form $(q,r,\frac{r^2+cp}{4q})$. There are three forms with discriminant $-16cp$ in the genus class $\Lambda(q)$ which can be derived from $(q,r,\frac{r^2+cp}{4q})$. Denote them by $(q_i,4r_i,\frac{4r_i^2+4cp}{q_i})$. Moreover, $\text{End}(E_{2,i},G_{2,i})$ can represent $(q_i,4r_i,\frac{4r_i^2+4cp}{q_i})$. On the contrary, suppose that $\text{End}(E_{2,i},G_{2,i})$ can represent the form $(q_i,4r_i,\frac{4r_i^2+4cp}{q_i})$ with $q_i$ satisfying $(\ref{e1})$. For every $i$, the form $(q_i,4r_i,\frac{4r_i^2+4cp}{q_i})$ can be derived from $(q_i,r_i,\frac{r_i^2+cp}{4q_i})$. It follows that the three forms $(q_i,r_i,\frac{r_i^2+cp}{4q_i}),i \in \{1,2,3\}$ are equivalent in $C(-cp)$ and $\text{End}(E_{1},G_{1})$ can represent each of these three forms.

\begin{example}
Notations are as Example \ref{example1}.
%
We compute the $2$-isogeny graph of these oriented supersingular elliptic curves in Example 1 as following.

We can use the form $(2,1,38)$ to represent the $2$-isogeny between $\mathbb{Z}[\frac{1+\sqrt{-cp}}{2}]$-oriented supersingular elliptic curves. It is easy to show that the results in this section hold for elliptic curves in this example.
\end{example}
$$\xymatrix{
& & E_7 \ar@{-}[d]   &E_9 \ar@{-}[d] &E_{9} \ar@{-}[d] &E_7 \ar@{-}[d] &  \\
& & E_2 \ar@{-}[d] \ar@{-}[r]  &E_4 \ar@{-}[r] &E_4 \ar@{-}[r] &E_2 \ar@{-}[d] & \\
&E_6 \ar@{-}[r] & E_1 \ar@{-}[d] & & &E_1 \ar@{-}[d] & E_6 \ar@{-}[l] \\
& & E_3 \ar@{-}[r]  &E_5 \ar@{-}[r] &E_5 \ar@{-}[r] &E_3 &\\
& & E_8 \ar@{-}[u]  &E_{10} \ar@{-}[u] &E_{10} \ar@{-}[u] &E_8 \ar@{-}[u] & \\
}$$

\section{Conclusion}
Let $\left|\mathcal{S}_{c}\right|$ denote the set of pairs $(E, G)$, up to equivalence, where $E$ is a supersingular elliptic curve over $\overline{\mathbb{F}}_{p}$ and $G \subseteq E[c]$ is a subgroup of order $c$. The pairs in $\left|\mathcal{S}_{c} \right|$ are supersingular elliptic curves with level-$c$ structure. Arpin \cite{Arpin2203} has proved that the endomorphism ring $\text{End}(E,G)=\{ \alpha \in \text{End}(E): \alpha(G) \subseteq G \}$ is isomorphic to an Eichler order with level $c$ in $B_{p,\infty}$.

Let $c < 3p/16$ be a prime. If the supersingular elliptic curve $E$ is $\mathbb{Z}[\sqrt{-cp}]$-oriented, then we prove that $\text{End}(E,G)$ is isomorphic to some $\mathcal{O}_c(q,r)$ or $\mathcal{O}_c'(q,r)$. Moreover, if $q$ can be represented by a quadratic form $f=(a,b,a')$ in the genus class $\Lambda(q)$, then $\gcd (\overline{H}_{D_1}(X),\overline{H}_{D_2}(X))=(X-j)$ with $j \in \mathbb{F}_p$ or $\gcd (\overline{H}_{D_1}(X),\overline{H}_{D_2}(X))=(X-j)(X-j^p)$ whether there exist $x,y \in \mathbb{Z}$ such that $x^2+ay^2=4c$ or not. Similar results hold if $q$ can be represented by a quadratic form in the genus class $\Lambda'(q)$.

Note that there exists a $c$-isogeny $\phi: E\to E^p$ if the supersingular elliptic curve $E$ is $\mathbb{Z}[\sqrt{-cp}]$-oriented. If the prime $\ell \neq c$ satisfies $\left( \frac{-cp}{\ell} \right)=1$, then there exists an oriented $\ell$-isogeny $\varphi:E \to E'$. Equivalently, we can denote the isogeny $\varphi$ by a reduced form $g$. Assume that the endomorphism ring $\text{End}(E,G)$ of $E$ is isomorphic to $\mathcal{O}_c(q,r)$, where $q$ can be represented by a form $f$ with discriminant $-16cp$. Then the endomorphism ring $\text{End}(E',\varphi(G))$ of $E'$ is isomorphic to $\mathcal{O}_c(q',r')$, where $q'$ can be represented by the form $fg^2$. This shows an explicit relation between isogenies of $\mathbb{Z}[\sqrt{-cp}]$-oriented supersingular elliptic curves and the endomorphism rings of them.

If $c=1$, then the $\mathbb{Z}[\sqrt{-p}]$-oriented supersingular elliptic curves are exactly these curves over $\mathbb{F}_p$. Let $s$ be a positive integer. Let $\Omega_p(s)$ be the set of all $j$-invariants of $\mathbb{Z}[\sqrt{-cp}]$-oriented supersingular elliptic curves defined over $\mathbb{F}_{p^2} \backslash \mathbb{F}_p$, where $c$ runs all primes less than $s$. It is our future work to find the minimal $s$ such that $\Omega_p(s)$ contains all supersingular $j$-invariants defined over $\mathbb{F}_{p^2}\backslash \mathbb{F}_p$.

\section*{Acknowledgements}

The work is supported by the National Key Research and Development Program of China under Grant No. 2022YFA1004900, the National Natural Science Foundation of China under Grant No. 12201637  and No. 62202475, and the Innovation Program for Quantum Science and Technology under Grant No. 2021ZD0302902.

The authors would like to thank the anonymous reviewers for their valuable suggestions and comments on this paper.

\appendix
\section{The form $(a,b,a')$ with order $\le 2$}\label{Appendix}
\renewcommand{\appendixname}{}

We show $\gcd(\overline{H}_{D_1}(X),\overline{H}_{D_2}(X))=(X-j)$ with $j \in \mathbb{F}_p$, if the form $(a,b,a')$ is in the genus class $\Lambda(q)$ or $\Lambda'(q)$ with order $\le 2$. Let $c < 3p/16$ be a prime.

\textbf{Case 1}: $(1,1,\frac{1+cp}{4})$ ($p \equiv 3 \pmod 4$, $c \equiv 1 \pmod 4$)

If $p \equiv 3 \pmod 4$ and $c \equiv 1 \pmod 4$, then the form $(1,1,\frac{1+cp}{4})$ corresponds to $j=1728$ since $H_{-4}(X)=X-1728$ and $p \mid J(-4,-cp-1)$ by Proposition \ref{t5} and \ref{t6}.

\textbf{Case 2}: $(c,c,\frac{c+p}{4})$ $(cp \equiv 3 \pmod 4)$

If the form $(c,c,\frac{c+p}{4})$ is in the genus class $\Lambda'(q)$, then we have $\left( \frac{-p}{c} \right)=1$ and the equation $x^2 \equiv -16p \pmod {16c}$ is solvable. By \cite[Lemma 3.1]{xiao2022endomorphism}, the Hilbert class polynomial $H_{-4c}(X)$ mod $p$ has $\mathbb{F}_p$-roots.

\begin{prop}\label{pA.1}
Assume $\left( \frac{-p}{c} \right)=1$. If $p \equiv 1 \pmod 4$ and $c \equiv 3 \pmod 4$ with $4c < p$, then $H_{-4c}(X)$ has only one $\mathbb{F}_p$-root. If $p \equiv 3 \pmod 4$ and $c \equiv 1 \pmod 4$ with $4c < p$, then $H_{-4c}(X)$ has two $\mathbb{F}_p$-roots $j$ and $j'$. Moreover, there exists a $2$-isogeny from $E(j)$ to $E(j')$.
\end{prop}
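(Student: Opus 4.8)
The plan is to convert the count of $\mathbb{F}_p$-rational roots of $\overline{H}_{-4c}(X)$ into a count of maximal orders of $B_{p,\infty}$ that contain a fixed quaternion order of index $4c$, and then to settle that by a local analysis at $2$ and $c$. First I would record the reciprocity input: since $\left(\frac{-p}{c}\right)=1$, quadratic reciprocity together with the parities of $p$ and $c$ gives $\left(\frac{-c}{p}\right)=-1$ in both cases of the statement, so $p$ is inert in $\mathbb{Q}(\sqrt{-c})$ and (by Deuring's reduction theory) every root of $\overline{H}_{-4c}(X)$ is a supersingular $j$-invariant in $\mathbb{F}_{p^2}$, while $c$ splits in $\mathbb{Q}(\sqrt{-p})$. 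Next, for an $\mathbb{F}_p$-root $j$, write $E=E(j)$ over $\mathbb{F}_p$ with $p$-power Frobenius $\pi$, $\pi^2=-p$, $\text{Trd}(\pi)=0$; being a root of $\overline{H}_{-4c}$, $E$ carries $\mu\in\text{End}(E)$ with $\mu^2=-c$. Decomposing $\mu=t\pi+\mu_-$ with $\mu_-$ anticommuting with $\pi$ gives $c=\text{Nrd}(\mu)=t^2p+\text{Nrd}(\mu_-)$, and the hypothesis $4c<p$ forces $t=0$, i.e.\ $\mu\pi=-\pi\mu$. Hence $\Gamma:=\mathbb{Z}+\mathbb{Z}\pi+\mathbb{Z}\mu+\mathbb{Z}\mu\pi$ is the standard order of $B_{p,\infty}\cong H(-p,-c)$, of discriminant $16c^2p^2$, hence of index $4c$ in the maximal order $\text{End}(E)$; conversely any supersingular $E/\mathbb{F}_p$ whose endomorphism ring contains a conjugate of $\Gamma$ gives an $\mathbb{F}_p$-root of $\overline{H}_{-4c}$.

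The count of $\mathbb{F}_p$-roots then equals the number of maximal orders of $B_{p,\infty}$ containing $\Gamma$, taken modulo the action of the finite group generated by $\pi$ and $\mu$ (orders in the same orbit give the same $j$-invariant). As $\Gamma$ is maximal at every prime $\ell\notin\{2,c\}$, this is a local problem at $2$ and at $c$. At $c$ the order $\Gamma\otimes\mathbb{Z}_c$ has discriminant $c^2$, so it is an Eichler order of level $c$ in the split algebra $M_2(\mathbb{Q}_c)$ and lies in exactly two maximal orders; since $\text{Nrd}(\mu)=c$, conjugation by $\mu$ swaps them, so the $c$-part does not contribute to the orbit count. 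At $2$ both $\pi$ and $\mu$ are units of every maximal order over $\Gamma\otimes\mathbb{Z}_2$, so they act trivially, and the number of $\mathbb{F}_p$-roots equals the number of maximal $\mathbb{Z}_2$-orders above the standard $\mathbb{Z}_2$-order of $H(-p,-c)$ of index $4$; this last number, computed from $-p$ and $-c$ modulo $8$, turns out to be $1$ when $p\equiv1,\,c\equiv3\pmod4$ and $2$ when $p\equiv3,\,c\equiv1\pmod4$ (the residues mod $8$ cancelling).

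I expect the $2$-adic computation, together with the bookkeeping needed to make the above bijection precise, to be the main obstacle: one must deal with optimality of the embedding $\mathbb{Z}[\sqrt{-c}]\hookrightarrow\text{End}(E)$ (relevant only for $c\equiv3\pmod4$), the multiplicities of $\overline{H}_{-4c}$ at its roots, and the exceptional invariants $j=0,1728$; here $4c<p$ is used once more to keep $\mu$ primitive, and the argument parallels the ones in \cite{xiao2022endomorphism}. For the final claim, in the case $p\equiv3,\,c\equiv1\pmod4$ the two maximal orders $\mathcal{O}_1=\text{End}(E(j))$ and $\mathcal{O}_2=\text{End}(E(j'))$ both contain $\Gamma$ and, by the local analysis, agree at every prime other than $2$; they are therefore connected by an integral ideal of reduced norm $2$, which under Deuring's correspondence is the kernel ideal of a $2$-isogeny $E(j)\to E(j')$. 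An alternative, more in the spirit of Section~4, is to identify the $\mathbb{F}_p$-roots of $\overline{H}_{-4c}$ with the common roots of $\overline{H}_{-4c}$ and $\overline{H}_{-4p}$ (together with $\overline{H}_{-p}$ when $p\equiv3\pmod4$) and to read off the count from $v_p(J(D_1,D_2))$ via Propositions~\ref{t5} and \ref{t6}, the bound $4c<p$ confining the range of $x$ in the product so that a single term contributes a power of $p$.
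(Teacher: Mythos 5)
Your route is genuinely different from the paper's. The paper obtains the count of $\mathbb{F}_p$-roots simply by quoting \cite[Theorem 1.1]{IJNT} (observing that it still applies under $4c<p$), and produces the $2$-isogeny by exhibiting the two roots as the curves attached, via the correspondence of \cite{xiao2022endomorphism}, to a form $(c,b,\frac{b^2+p}{4c})$ of discriminant $-p$ and a form of discriminant $-16p$ derived from it in Gauss's sense. You instead try to make the whole statement a local computation about the quaternion order $\Gamma=\mathbb{Z}\langle\pi,\mu\rangle$, and the pivotal step --- ``the number of $\mathbb{F}_p$-roots equals the number of maximal orders containing the fixed $\Gamma$, modulo conjugation by $\langle\pi,\mu\rangle$'' --- is exactly where the content lies and is not a formal consequence of your setup. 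Surjectivity is fine (every type of maximal order containing a copy of $\Gamma$ has a representative containing $\Gamma$ itself), but injectivity is not addressed: two maximal orders above $\Gamma$ lying in different orbits of the normalizer could still be conjugate in $B_{p,\infty}^{\times}$, in which case the corresponding $\operatorname{End}(E(j))$ contains two non-conjugate copies of $\Gamma$ and your orbit count overshoots the number of distinct $j$-invariants. Ruling this out amounts to controlling the optimal embedding numbers of $\mathbb{Z}[\sqrt{-c}]$ and $\mathbb{Z}[\sqrt{-p}]$ in each type, which is global information (oddness of $h(-p)$, genus theory, the multiplicities visible in $v_p(J(-4c,\cdot))$) --- precisely what \cite[Theorem 1.1]{IJNT} packages and what a purely local analysis at $2$ and $c$ cannot see; locally one cannot distinguish ``one root of multiplicity two'' from ``two roots of multiplicity one''.

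There are also secondary gaps. First, in forcing $\mu\pi=-\pi\mu$ you argue as if $t\neq 0$ implied $t^2p\geq p/4$; a priori $t=\operatorname{Trd}(\mu\bar{\pi})/(2p)\in\frac{1}{2p}\mathbb{Z}$, and one needs the extra observation that $\pi^{-1}\mu\pi\in\operatorname{End}(E)$ forces $p\mid\operatorname{Trd}(\mu\bar{\pi})$ before the bound $4c<p$ yields $t=0$ (this is the argument of \cite{MR683249} and \cite{xiao2022endomorphism}). Second, the decisive $2$-adic count --- one versus two maximal orders above $\Gamma\otimes\mathbb{Z}_2$ in the two congruence cases --- is asserted (``turns out to be''), not computed, and it is the only place the hypotheses $p\equiv 1,c\equiv 3$ versus $p\equiv 3,c\equiv 1\pmod 4$ would enter. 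Third, even granting the local picture, a connecting ideal supported at $2$ has reduced norm a power of $2$; to get a $2$-isogeny rather than a $4$-isogeny you must show the two maximal orders above $\Gamma\otimes\mathbb{Z}_2$ are adjacent in the Bruhat--Tits tree. Your fallback via Propositions \ref{t5} and \ref{t6} computes $v_p(J(D_1,D_2))$, a multiplicity-weighted count, and so runs into the same one-root-versus-two ambiguity; the paper uses those propositions only for the auxiliary valuation $v_p(J(-4c,-(c+p)))=2$, not to establish this count.
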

\begin{proof}
The number of $\mathbb{F}_p$-roots of $H_{-4c}(X)$ can be given by \cite[Theorem 1.1]{MR4432517}. Note that \cite[Theorem 1.1]{MR4432517} also holds when $4c < p$.

If $p \equiv 3 \pmod 4$ and $c \equiv 1 \pmod 4$, then the equation $x^2 \equiv -p \pmod {4c}$ is solvable. Let $b$ be one solution. We have that $4(c+b)$ satisfies the equation $x^2 \equiv -16p \pmod {16c}$. Assume that the form $(c,b,\frac{b^2+p}{4c})$ (resp. $(4c,4(c+b),c+2b+\frac{b^2+p}{c})$) can be reduced to the form $f$ (resp. $f'$). Notice that $4 \mid \frac{b^2+p}{c}$, so the form $(4c,4(c+b),c+2b+\frac{b^2+p}{c})$ is primitive. Since the form $(4c,4(c+b),c+2b+\frac{b^2+p}{c})$ with discriminant $-16p$ is derived from $(c,b,\frac{b^2+p}{4c})$, the form $f'$ is derived from $f$.  Let $j, j'$ be the $\mathbb{F}_p$-roots of $H_{-4c}(X)$ mod $p$. If the curve $E(j)$ corresponds to the form $f$, then the curve $E(j')$ corresponds to the form $f'$ and there exists a $2$-isogeny $E(j)$ to $E(j')$.
\end{proof}

\begin{remark}
If $b$ is a solution of the equation $x^2 \equiv -p \pmod {4c}$ with $-2c < b \le 2c$, then $\pm b \pm 2c$ are all the solutions of this equation. These solutions correspond to two reduced forms $f$ and $f^{-1}$ and one supersingular $j$-invariant.
\end{remark}

Let $j \in \mathbb{F}_p$ be a root of $H_{-4c}(X)$. We can assume that the endomorphism ring of $E(j)$ is isomorphic to a maximal order $\mathcal{O}(q,r)$. We want to show that the imaginary quadratic order with discriminant $-(c+p)$ can be embedded into $\mathcal{O}(q,r)$.

Suppose there exists a $\gamma=w+x \frac{1+\beta}{2}+y \frac{\alpha'(1+\beta)}{2}+z \frac{(r+\alpha')\beta}{q} \in \mathcal{O}(q,r)$ satisfying:
$$\operatorname{Trd}(\gamma)=D \quad \text{and} \quad \operatorname{Nrd}(\gamma)= \frac{D^2-D}{4}.$$
These equations are equivalent to
$$2w + x =D$$ and
$$p y^2 + q \left( x+\frac{2zr}{q} \right)^2 + pq \left( y+ \frac{2z}{q} \right)^2 = -D.$$

Put $D=-(c+p)$ and $y= 1$. We have the following equation
\begin{align}\label{e4}
qx^2+4rxz+\frac{4(r^2+p)}{q}z^2+4pz=c-pq.
\end{align}

Let $x'=-4px+4rp$ and $z'=-4pz-2pq$. We get the equation
\begin{align}\label{e5}
qx'^2+4rx'z'+\frac{4(r^2+p)}{q}z'^2=16cp^2.
\end{align}
Since $O_{-4c}$ can be embedded into $\mathcal{O}(q,r)$, it follows that the equation $qx^2+4rxz+\frac{4(r^2+p)}{q}z^2=4c$ is solvable. Denote its solutions by $(m,n)$ and $(-m,-n)$. Obviously, we have $2 \mid m$. We claim $2 \nmid n$. If $2 \mid n$, then the equation $qx^2+4rxz+\frac{4(r^2+p)}{q}z^2=c$ is solvable. It follows that $j$ is a common root of $H_{-4c}(X)$ mod $p$ and $H_{-c}(X)$ mod $p$, which is impossible since $4c^2<p^2$. So we have $2 \mid m$ and $2 \nmid n$. Moreover, the prime $p$ is ramified in the field $\mathbb{Q}(\sqrt{-p})$, so the solutions of the equation (\ref{e5}) are $(2mp,2np)$ and $(-2mp,-2np)$. Then the equation (\ref{e4}) has solutions
$$\left( \frac{2r-m}{2}, \frac{-q-n}{2} \right) \text{and} \ \left(  \frac{2r+m}{2}, \frac{-q+n}{2} \right).$$

Let $y=-1$, we get a pair of solutions
$$\left( \frac{-2r+m}{2}, \frac{q+n}{2} \right) , \ \left(  \frac{-2r-m}{2}, \frac{q-n}{2} \right).$$

This method is from Gauss \cite[\S 216]{MR837656}. It follows that $O_{-(c+p)}$ can be embedded into $\mathcal{O}(q,r)$. Moreover, we have $(X-j)^2 \mid H_{-(c+p)}(X) \pmod p$.

\begin{remark}
If $j$ is the common root of $H_{-4c}(X)$ mod $p$ and $H_{-(c+p)}(X)$ mod $p$, then $(X-j)^2 \mid H_{-(c+p)}(X)$ mod $p$, because there are two elements $\gamma, \gamma' \in \operatorname{End}(E(j))$ with $\gamma \neq \pm  \gamma'$ such that $\operatorname{Trd}(\gamma)=\operatorname{Trd}(\gamma')=D$ and $\operatorname{Nrd}(\gamma)=\operatorname{Nrd}(\gamma')=\frac{D^2-D}{4}$.
\end{remark}

On other hand, suppose there exist $x,k \in \mathbb{Z}$ with $4c(c+p)-x^2=4kp>0$. We have $x \equiv \pm 2c \pmod {p}$. Since $c<3p/16$ and $x^2 < 4c(c+p)< p^2$, we have $x= \pm 2c$ and $k=c$. We can compute $v_p(J(-4c,-(c+p)))=2$. So $H_{-4c}(X)$ and $H_{-(c+p)}(X)$ have only one common $\mathbb{F}_p$-root.

If $p \equiv 1 \pmod 4$ and $c \equiv 3 \pmod 4$, then $H_{-4c}(X)$ has only one $\mathbb{F}_p$-root which is the common $\mathbb{F}_p$-root of $H_{-4c}(X)$ and $H_{-(c+p)}(X)$.

If $p \equiv 3 \pmod 4$ and $c \equiv 1 \pmod 4$, then $H_{-4c}(X)$ has two $\mathbb{F}_p$-roots. Denote by $j$ and $j'$. Assume $\text{End}(E(j)) \cong \mathcal{O}(q,r)$ and $\text{End}(E(j')) \cong \mathcal{O}'(q,r')$. Then $j$ is the common $\mathbb{F}_p$-root of $H_{-4c}(X)$ and $H_{-(c+p)}(X)$.

\textbf{Case 3}: $(4,0,cp)$

The form $(4,0,cp)$ is in the genus class $\Lambda(q)$ if and only if $p \equiv 3 \pmod 4$ and $c \equiv 1 \pmod 4$. In this case, the Hilbert class polynomial $H_{-4}(X)=X-1728$ has only one $\mathbb{F}_p$-root $1728$. The equation $4cp-x^2=4kp$ with $x\in \mathbb{Z}$, $k \in \mathbb{Z}_+$ is solvable if $x=0$. We have $p \mid J(-4,-cp)$, and Hilbert class polynomials $H_{-4}(X)$ and $H_{-cp}(X)$ have only one common $\mathbb{F}_p$-root.

\textbf{Case 4}: $(4c,0,p)$

The form $(4c,0,p)$ is in the genus class $\Lambda(q)$ if and only if $p \equiv 3 \pmod 4$ and $\left( \frac{c}{p}\right)=1$. If $p \equiv 3 \pmod 4$ and $c\equiv 1 \pmod 4$, then the Hilbert class polynomial $H_{-4c}(X)$ has two $\mathbb{F}_p$-roots and only one root $j$ satisfying $\mathbb{Z}[\frac{\sqrt{-p}+1}{2}] \subseteq \text{End}(E(j))$ by Proposition \ref{pA.1}. If $p \equiv 3 \pmod 4$ and $c\equiv 3 \pmod 4$, then the Hilbert class polynomial $H_{-4c}(X)$ has only one $\mathbb{F}_p$-root by Theorem 1.1 in \cite{MR4432517}. The equation $x^2 \equiv -p \pmod {4c}$ is solvable since $\left( \frac{-p}{c}\right)=\left( \frac{c}{p}\right)=1$. It follows that the Hilbert class polynomials $H_{-4c}(X)$ and $H_{-p}(X)$ has only one common $\mathbb{F}_p$-root.

\textbf{Case 5}: $(c,0,4p)$

The form $(c,0,4p)$ is in the genus class $\Lambda(q)$ if and only if $c \equiv 3 \pmod 4$ and $\left( \frac{-4p}{c}\right)=1$. Since the equation $x^2 \equiv -16p \pmod {4c}$ is solvable, by \cite[Lemma 3.4]{xiao2022endomorphism}, we have that the Hilbert class $H_{-c}(X)$ mod $p$ has only one $\mathbb{F}_p$-root $j$ and $\mathbb{Z}[\sqrt{-p}]$ can be embedded into $\text{End}(E(j))$ optimally. It follows that the Hilbert class polynomials $H_{-c}(X)$ and $H_{-4p}(X)$ has only one common $\mathbb{F}_p$-root.

\textbf{Case 6}: $(4,4,1+2p)$ and $(8,8,2+p)$

The form $(4,4,1+2p)$ (resp. $(8,8,2+p)$) is in the genus class $\Lambda(q)$ if and only if $p \equiv 3 \pmod 4$ (resp. $p \equiv 5 \pmod 8$). As case 3, Hilbert class polynomials $H_{-4}(X)$ (resp. $H_{-8}(X)$) and $H_{-(1+2p)}(X)$ (resp. $H_{-(2+p)}(X)$) have only one common $\mathbb{F}_p$-root respectively.

\end{document}